\numberwithin{equation}{section}
\newtheorem{theorem}{Theorem}[section]
\newtheorem{lemma}[theorem]{Lemma}
\newtheorem{remark}[theorem]{Remark}
\newtheorem{corollary}[theorem]{Corollary}
\newcommand{\C}{\mathbb C}
\newcommand{\Z}{\mathbb Z}
\newcommand{\T}{\mathbb T}
\newcommand{\N}{\mathbb N}
\newcommand{\R}{\mathbb R}
\newcommand{\ol}{\overline}
\newcommand{\D}{\mathbb D}
\newcommand{\Q}{\mathbb Q}
\title[Arithmetic Spectral Transition]{Arithmetic spectral transition for the unitary almost Mathieu operator}
\author{Fan Yang}
\address{Department of Mathematics, Louisiana State University, Baton Rouge, LA 70803}
\thanks{Email: yangf@lsu.edu}
\begin{document}
\maketitle

\begin{abstract}
We study the unitary almost Mathieu operator (UAMO), a one-dimensional quasi-periodic unitary operator arising from a two-dimensional discrete-time quantum walk on $\mathbb Z^2$ in a homogeneous magnetic field. In the positive Lyapunov exponent regime $0\le \lambda_1<\lambda_2\le 1$, we establish an arithmetic localization statement governed by the frequency exponent $\beta(\omega)$. More precisely, for every irrational $\omega$ with $\beta(\omega)<L$, where $L>0$ denotes the Lyapunov exponent, and every non-resonant phase $\theta$, we prove Anderson localization, i.e.\ pure point spectrum with exponentially decaying eigenfunctions. This extends our previous arithmetic localization result for Diophantine frequencies (for which $\beta(\omega)=0$) to a sharp threshold in frequency.
\end{abstract}

\section{Introduction}

Discrete-time quantum walks were introduced by Aharonov--Davidovich--Zagury \cite{ADZ} as quantum analogues of classical random walks. In contrast to the diffusive behavior of the classical walk, quantum walks typically exhibit ballistic spreading, a feature underlying several quantum-algorithmic applications (search, element distinctness, verification, etc.), see e.g.\ \cite{Amb07,Por2013}. Beyond quantum information, quantum walks provide an experimentally accessible platform for studying wave propagation, interference, and localization phenomena in engineered lattices. Localization for random and disordered quantum walks (and, more generally, random unitary models) has been studied extensively; see, for example, \cite{ASW,HJS09,HJS,JM,J} and the spectral transition analysis for random quantum walks on trees in \cite{HJ}. For dynamical and spreading estimates for quantum walks via transfer-matrix bounds, see also \cite{DFO}.

A major mathematical development in this area is the connection between certain classes of quantum walks and CMV matrices \cite{CGMV}, which are unitary operators arising in the theory of orthogonal polynomials on the unit circle (OPUC) \cite{SOPUC1,SOPUC2,5years}. This bridge provides access to powerful spectral tools and has stimulated substantial activity on spectral and dynamical questions for quasi-periodic and random unitary models. For background on spectral theory of CMV matrices, including Weyl--Titchmarsh theory and dynamical characterizations via uniform hyperbolicity, see \cite{GZ,DFLY}. For small quasi-periodic Verblunsky coefficients, \cite{LDZ} proved purely absolutely continuous spectrum.

\subsection{The unitary almost Mathieu operator}\label{subsec:UAMO_intro}
We study a quasi-periodic quantum-walk operator introduced in \cite{CFO} as a generalization of the model from \cite{FOZ}, motivated by a two-dimensional quantum walk on $\mathbb Z^2$ in a homogeneous magnetic field (see Section~3 of \cite{CFO}). Quantum walks in external electric fields have also been studied; see, for example \cite{CRWAGW,Faninterval}. Let $\mathcal H=\ell^2(\mathbb Z)\otimes\mathbb C^2$ with standard basis $\varepsilon_n^s=\varepsilon_n\otimes e_s$, $s\in\{+,-\}$. Fix couplings $\lambda_1,\lambda_2\in[0,1]$ and write $\lambda_j'=\sqrt{1-\lambda_j^2}$, $j=1,2$. For frequency $\omega\in\T$ and phase $\theta\in\T$, define
\begin{equation}\label{eq:defW_intro_new}
W_{\lambda_1,\lambda_2,\omega,\theta}:=S_{\lambda_1}Q_{\lambda_2,\omega,\theta},
\end{equation}
where the shift $S_{\lambda_1}$ acts by
\[
S_{\lambda_1}\varepsilon_n^{\pm}=\lambda_1\varepsilon_{n\pm 1}^{\pm}\pm \lambda_1'\varepsilon_n^{\mp},
\]
and the quasi-periodic coin $Q_{\lambda_2,\omega,\theta}$ acts coordinate-wise via
\[
Q_{\lambda_2,\omega,\theta,n}=
\begin{pmatrix}
\lambda_2\cos(2\pi(\theta+n\omega))+i\lambda_2' & -\lambda_2\sin(2\pi(\theta+n\omega))\\
\lambda_2\sin(2\pi(\theta+n\omega)) & \lambda_2\cos(2\pi(\theta+n\omega))-i\lambda_2'
\end{pmatrix}.
\]
Following \cite{CFO}, we refer to \eqref{eq:defW_intro_new} as the \emph{unitary almost Mathieu operator} (UAMO), reflecting its close analogy with the self-adjoint almost Mathieu operator and the presence of sharp spectral transitions in the unitary setting.

It is convenient to place $W_{\lambda_1,\lambda_2,\omega,\theta}$ in the framework of (extended) CMV matrices. Under the identification
\[
\varepsilon_n^+ \mapsto \varepsilon_{2n},\qquad \varepsilon_n^- \mapsto \varepsilon_{2n+1},
\]
one may view $W_{\lambda_1,\lambda_2,\omega,\theta}$ as an extended CMV operator built from $2\times 2$ blocks $\Theta(\alpha_n)$ associated with the coefficients $\alpha_n$ defined in \eqref{def:alpha_n}; see Section~\ref{sec:CMV_background}. In particular, $W_{\lambda_1,\lambda_2,\omega,\theta}$ is unitary and $\sigma(W_{\lambda_1,\lambda_2,\omega,\theta})\subset\partial\mathbb D$.

A key structural feature of $W_{\lambda_1,\lambda_2,\omega,\theta}$ is that the Lyapunov exponent of the associated cocycle can be computed explicitly on the spectrum (Theorem~\ref{thm:LE}); this is an immediate application of Avila's global theory \cite{global} of one-frequency analytic cocycles in the UAMO setting, as implemented in \cite{CFO}. In particular, the supercritical (i.e.\ positive Lyapunov exponent) regime is precisely $0\le \lambda_1<\lambda_2\le 1$.

\subsection{Measure-theoretic vs.\ arithmetic localization}\label{subsec:background_intro}
In the positive Lyapunov exponent regime, Cedzich--Fillman--Ong \cite{CFO} established a detailed spectral picture for the unitary almost Mathieu operator and proved Anderson localization for a.e.\ $(\omega,\theta)$ (measure-theoretic in the parameters), adapting the Bourgain--Goldstein large deviation and semi-algebraic set scheme for analytic quasi-periodic Schr\"odinger operators \cite{BG} to the CMV setting. While robust, this approach does not yield localization for a prescribed irrational frequency.

In \cite{FanUAMO}, by adapting Jitomirskaya’s arithmetic localization strategy for the almost Mathieu operator \cite{jit99}, we proved Anderson localization for all Diophantine $\omega$ and for a full-measure set of phases $\theta$ described by an explicit non-resonance condition. The goal of the present paper is to extend this to a sharp arithmetic threshold in the frequency variable.

More broadly, sharp arithmetic localization and spectral transition phenomena for quasi-periodic operators have been studied intensively in recent years; see, for example, \cite{AYZ,JL1,JYinterface} for sharp transition results in the almost Mathieu setting and \cite{JLMaryland,JY,HJY} for related sharp analyses in the Maryland model, as well as other lattices \cite{Handry,ehmtransition,HYZ20,SSY}. 

The large deviation framework was advanced substantially in a series of works of Bourgain--Goldstein--Schlag \cite{GS01,GS08,GS11,GSV}. More recently, Han--Schlag introduced new ideas that elevate this analytic machinery to produce arithmetic localization statements for some general analytic potentials \cite{HS1,HS2,HS3}, including sharp results in Liouville regimes \cite{Hanloc}. These developments suggest that related large-deviation and resonance-elimination techniques should also be effective in the unitary setting, and may lead to further localization and spectral results beyond the UAMO.

\subsection{Arithmetic input and main result}\label{subsec:main_intro}
Throughout we work in the \emph{supercritical regime}
\begin{equation}\label{eq:supercritical_regime_intro}
0\le \lambda_1<\lambda_2\le 1.
\end{equation}
In this regime the Szeg\H{o} Lyapunov exponent on the spectrum is positive and explicit (see Sec. \ref{sec:Sze}): for all $z\in\sigma(W_{\lambda_1,\lambda_2,\omega,\theta})$,
\begin{equation}\label{eq:LE_equals_Lhalf}
L(\omega,S(\cdot,z))=\frac{L}{2},
\end{equation}
where we set
\begin{equation}\label{eq:def_L_intro}
L=L(\lambda_1,\lambda_2):=\ln\frac{\lambda_2(1+\lambda_1')}{\lambda_1(1+\lambda_2')}>0.
\end{equation}

Let $\omega\in\R\setminus\Q$ and write $p_n/q_n$ for its continued fraction approximants, see Sec.~\ref{sec:continued}. Define the frequency exponent
\begin{equation}\label{eq:def_beta_intro}
\beta(\omega):=\limsup_{n\to\infty}\frac{\ln q_{n+1}}{q_n}\in[0,\infty].
\end{equation}
This quantity measures the exponential quality of rational approximation to $\omega$; in particular, $\beta(\omega)=0$ for every Diophantine $\omega$.

As in the self-adjoint even-potential setting (cf.\ Jitomirskaya--Simon \cite{JS}), one should not expect Anderson localization to hold for all phases. We therefore restrict to a full-measure set of phases determined by the following non-resonance condition. For $\theta\in\T$, let $\|\theta\|=\mathrm{dist}(\theta,\Z)$ and set
\begin{equation}\label{eq:theta_set}
\tilde{\gamma}(\omega,\theta):=\limsup_{n\to\infty}
-\frac{\ln\|2\theta-\tfrac12+n\omega\|}{|n|}.
\end{equation}
We call $\theta$ \emph{non-resonant} (for $\omega$) if $\tilde{\gamma}(\omega,\theta)=0$.

Our main theorem establishes the localization side of the expected arithmetic transition: localization holds whenever the Lyapunov exponent dominates the frequency exponent.
\begin{theorem}[Arithmetic Anderson localization]\label{thm:main}
Assume \eqref{eq:supercritical_regime_intro}. Let $\omega\in\R\setminus\Q$ satisfy $\beta(\omega)<L$, and let $\theta\in\T$ be non-resonant, i.e.\ $\tilde{\gamma}(\omega,\theta)=0$.
Then the unitary almost Mathieu operator $W_{\lambda_1,\lambda_2,\omega,\theta}$
has Anderson localization; that is, it has pure point spectrum with exponentially decaying eigenfunctions.
\end{theorem}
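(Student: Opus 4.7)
The plan is to push the arithmetic Jitomirskaya-type scheme from our previous work \cite{FanUAMO} (which handled the Diophantine case) through to the sharp Liouville threshold $\beta(\omega)<L$, borrowing resonance-counting ideas developed for the self-adjoint almost Mathieu operator in \cite{AYZ,JL1}. By a Schnol-type argument, it suffices to show that every polynomially bounded formal solution $\psi$ of $W_{\lambda_1,\lambda_2,\omega,\theta}\psi=e^{i\alpha}\psi$, with $e^{i\alpha}\in\sigma(W_{\lambda_1,\lambda_2,\omega,\theta})$, decays exponentially at rate $L/2$. The vehicle is the block resolvent (Poisson) identity: for any finite interval $I\ni n$ on which $W|_I-e^{i\alpha}$ is invertible, $\psi(n)$ equals a finite sum of boundary terms of the form $G_I(n,m)\psi(m')$, so exponential decay will follow once we locate, for every large $n$, a \emph{good} interval $I_n\ni n$ whose endpoints sit at distance $\gtrsim |n|$ from $n$ and whose Green's function satisfies $|G_{I_n}(n,m)|\le e^{-(L/2)|n-m|+o(|n|)}$.

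The existence of such $I_n$ rests on a large deviation estimate (LDT) for the Szeg\H{o} transfer cocycle $S(\cdot,z)$, in the spirit of \cite{CFO}, consistent with the explicit Lyapunov exponent \eqref{eq:LE_equals_Lhalf}: at scales $N\asymp q_n$ or short multiples $kq_n\le q_{n+1}$, $\tfrac{1}{N}\log\|M_N(\theta+j\omega,z)\|$ lies within $o(1)$ of $L/2$ outside a phase set of small measure. Cramer's rule then converts this bound into Green's function decay on most translates of a length-$N$ interval, and quantitative equidistribution of $\{j\omega\}$ at scale $q_n$ guarantees that good translates are dense enough to cover every $n$. The arithmetic hypothesis $\beta(\omega)<L$ enters here in a crucial way: it forces the worst-case cumulative amplification $e^{\beta q_n}$ incurred when a resonant denominator $q_{n+1}\approx e^{\beta q_n}$ must be tolerated inside $I_n$ to remain strictly smaller than the Lyapunov gain $e^{L q_n}$ accumulated over the same block.

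The second batch of resonances to rule out comes from the $\theta\mapsto -\theta+\tfrac12$ symmetry of the Szeg\H{o} cocycle (the UAMO analogue of the even-symmetry obstruction in \cite{JS,jit99}): a finite-volume eigenvalue of $W|_{I_n}$ can sit abnormally close to $e^{i\alpha}$ precisely when $\|2\theta-\tfrac12+j\omega\|$ is very small for some $j\in I_n$. The non-resonance hypothesis $\tilde{\gamma}(\omega,\theta)=0$ yields the sub-exponential lower bound $\|2\theta-\tfrac12+j\omega\|\ge e^{-o(|j|)}$, which in turn forces any near-collision of a finite-volume eigenvalue with $e^{i\alpha}$ to be at worst sub-exponentially small. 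This separation is still summable against the exponential Green's function decay, so phase resonances can be absorbed by slightly enlarging $I_n$ to push them toward its boundary while keeping $n$ deep in its interior.

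The main obstacle will be the simultaneous handling of frequency and phase resonances at the sharp threshold. In the Diophantine regime \cite{FanUAMO} one can work entirely inside a single scale $q_n$ with a clean gap to $q_{n+1}$; once $\beta(\omega)>0$, the allowed interval lengths are forced to span scales comparable to $q_{n+1}$, and one must balance three competing exponentials (Lyapunov gain $Lq_n$, frequency-resonance loss $\beta q_n$, and phase-resonance loss $o(q_n)$) without slack. Showing that $I_n$ can be chosen to simultaneously avoid the dominant phase resonances near $n$ and exhibit the full Lyapunov-rate Green's function decay requires an optimal implementation of the LDT together with a careful continued-fraction resonance analysis, with every $\varepsilon$ tracked. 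This is precisely where the sharpness of the threshold $\beta<L$ is used.
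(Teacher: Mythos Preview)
Your high-level framing is right: reduce via Shnol, feed Green's function/determinant lower bounds into the Poisson formula, and control two species of resonance (frequency via $\beta(\omega)$, phase via $\tilde\gamma(\omega,\theta)$). But the mechanism you propose for the determinant lower bounds is not what the paper uses, and as written it would not deliver an \emph{arithmetic} result. You invoke a large deviation estimate for the Szeg\H{o} cocycle ``in the spirit of \cite{CFO}''; that is precisely the Bourgain--Goldstein route that \cite{CFO} followed and that yields localization only for a.e.\ $(\omega,\theta)$. Upgrading LDT to localization for a \emph{prescribed} $\omega$ with $\beta(\omega)<L$ requires the Han--Schlag machinery, which you do not invoke and which is a substantially different argument from the one you outline. ``Cramer's rule then converts this bound into Green's function decay on most translates'' does not give you a sharp lower bound on $|P_{[a,b],z}|$ at the \emph{specific} phase $\theta$ and interval you need; you only get it off a bad set, and eliminating that set is the whole problem.

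The paper instead follows the Jitomirskaya scheme concretely. The key algebraic input, carried over from \cite{FanUAMO}, is the degree reduction $P_{[1,2n],z}(\theta)=g_n\big(\sin(2\pi(\theta+\tfrac{n-1}{2}\omega))\big)$ with $\deg g_n\le n$. One then samples $g_{h/2}$ by Lagrange interpolation at $h/2+1$ points coming from two disjoint index sets $I_0$ (near $0$) and $I_y$ (near $y$); if all sampled determinants were small, the sup of $g_{h/2}$ would violate the Lyapunov lower bound of Lemma~\ref{lem:ave_low}. All the arithmetic work sits in bounding the Lagrange kernel, which unwinds into sums of $\ln|\sin(\pi\tfrac{j-\ell}{2}\omega)|$ and $\ln|\sin(\pi(2\theta-\tfrac12+(\cdots)\omega))|$, handled by the Avila--Jitomirskaya cosine-product Lemma~\ref{lem:nonmin} plus continued-fraction estimates. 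Strong Liouville scales ($q_{n+1}>e^{\varepsilon q_n}$) require a resonant/non-resonant decomposition \`a la \cite{JL1} relative to multiples of $2q_n$; the $\beta$ loss appears explicitly as the term $\ln\|t'q_n\omega\|\ge -(\beta+\varepsilon)q_n$ in the denominator estimate, and the phase non-resonance $\tilde\gamma=0$ is used pointwise in the $\sum_{2,2}$ bounds. None of this is LDT; it is polynomial interpolation exploiting the specific structure of the UAMO determinant, and your sketch does not contain that ingredient.
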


\begin{remark}\label{rem:main}
\ 

\begin{enumerate}
\item Since $\beta(\omega)=0$ for every Diophantine $\omega$, Theorem~\ref{thm:main} recovers and extends \cite{FanUAMO}.
\item The condition $\beta(\omega)<L$ is the natural arithmetic threshold for the localization side of a sharp frequency transition, in analogy with arithmetic transitions for the self-adjoint almost Mathieu operator (see, e.g., \cite{AYZ,JL1}).
\end{enumerate}
\end{remark}

\subsection{Strategy and organization}\label{subsec:org_intro}
Our approach follows the arithmetic localization scheme: one seeks scale-by-scale Green's function control along the continued-fraction scales $q_n$, with resonance elimination governed by the competition between Lyapunov growth and small divisors. In the self-adjoint almost Mathieu setting, Jitomirskaya--Liu developed this scheme into a sharp arithmetic localization proof; see \cite{JL1}. In the CMV/unitary setting, finite-volume restrictions are block-structured and the relevant determinants enjoy additional algebraic constraints, which we exploit via the degree reduction established in \cite{FanUAMO}.
Compared to the Diophantine-frequency case studied in \cite{FanUAMO}, the main difficulties occur at strong Liouville scales. A key point is that the resonant analysis depends on a sharp understanding of eigenfunction decay in the surrounding non-resonant regions.

The paper is organized as follows. Section~2 collects the CMV/Szeg\H{o} background and arithmetic preliminaries. Section~3 recalls key determinant structure lemmas from \cite{FanUAMO}. Section~4 outlines the multi-scale analysis and the weak/strong Liouville dichotomy. Sections~5 and~6 treat eigenfunction decay in non-resonant and resonant regimes at strong Liouville scales, respectively. Section~7 sketches the weak Liouville scale argument.

%%%%%%%%%%%%%%%%%%%%%%%%%%%%%%%%%%%%%%%%%%%%%%%%%%%%%%%%%%%%%%%%%%%%%%%%%%%%%%

\section{Preliminaries}\label{sec:prelim}

\subsection{Finite-volume restrictions}\label{sec:restriction}
Let $\mathbb D$ be the unit disk in $\C$ and $\partial\mathbb D$ the unit circle.
For a bounded operator (matrix) $A$ on $\ell^2(\Z)$ and integers $a\le b$, write
\[
A|_{[a,b]}:=\chi_{[a,b]}\,A\,\chi_{[a,b]}
\]
for its restriction to the interval $[a,b]$.
For extended CMV operators one has $(\mathcal L\mathcal M)|_{[a,b]}=\mathcal L|_{[a,b]}\,\mathcal M|_{[a,b]}$; see \cite{Kruger}.

\subsection{Extended CMV background}\label{sec:CMV_background}
For $\alpha\in\D$ set $\rho=\sqrt{1-|\alpha|^2}$ and define the CMV block
\[
\Theta(\alpha):=\begin{pmatrix}\ol{\alpha} & \rho\\ \rho & -\alpha\end{pmatrix}.
\]
Given a sequence $\{\alpha_n\}_{n\in\Z}\subset\D$, define $\Theta_n:=\Theta(\alpha_n)$ acting on
$\mathrm{span}\{\varepsilon_n,\varepsilon_{n+1}\}$. The (extended) CMV operator is
$W=\mathcal L\mathcal M$, where $\mathcal L$ is the direct sum of $\Theta_{2n}$ and $\mathcal M$ is the direct sum of $\Theta_{2n+1}$.
We refer to \cite{CGMV,SOPUC1,SOPUC2} for background and to \cite{CFO,FanUAMO} for the UAMO specialization.

\subsection{Szeg\H{o} cocycle and Lyapunov exponent}\label{sec:Sze}
We identify $\ell^2(\Z)\otimes \C^2 \to \ell^2(\Z)$ via
\[
\varepsilon_n^+ \mapsto \varepsilon_{2n},\qquad \varepsilon_n^- \mapsto \varepsilon_{2n+1}.
\]
For $n\in \Z$, set
\begin{align}\label{def:alpha_n}
\begin{cases}
\alpha_{2n}:=\lambda_1', \qquad \rho_{2n}:=\lambda_1,\\
\alpha_{2n+1}:=\lambda_2\sin(2\pi(\theta+n\omega)),\qquad
\rho_{2n+1}:=\lambda_2\cos(2\pi(\theta+n\omega))+i\lambda_2'.
\end{cases}
\end{align}

For a sequence $\{\alpha_n\}\subset \D$, define the Szeg\H{o} transfer matrix by
\begin{equation}\label{eq:def_Szego}
S_{n,z}:=\frac{1}{|\rho_n|}
\begin{pmatrix}
z &-\ol{\alpha}_n\\
-\alpha_n z &1
\end{pmatrix}.
\end{equation}
The Lyapunov exponent of the Szeg\H{o} cocycle is
\begin{align}\label{def:LS}
L(\omega, S(\cdot,z)):=\lim_{n\to\infty} \frac{1}{n} \int_{\T} \ln \|S_n(\theta,z)\|\, d\theta,
\end{align}
where $S_n(\theta,z):=S_{n-1,z}(\theta+(n-1)\omega)\cdots S_{0,z}(\theta)$.
This limit exists by subadditivity.

\begin{theorem}[Corollary 2.8 of \cite{CFO} and (3.25) of \cite{FanUAMO}]\label{thm:LE}
For $z\in \sigma(W_{\lambda_1,\lambda_2,\omega,\theta})$, we have
\[
L(\omega, S(\cdot, z))=\max\left(0, \frac{1}{2}\ln \frac{\lambda_2(1+\lambda_1')}{\lambda_1(1+\lambda_2')}\right).
\]
\end{theorem}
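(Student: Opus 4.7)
The plan is to apply Avila's global theory of one-frequency analytic cocycles \cite{global} to the (unnormalized) Szeg\H o transfer matrices and then to read off the value on the spectrum via the non-uniform-hyperbolicity characterization, following the UAMO implementation in \cite{CFO,FanUAMO}. Set
\[
\tilde S_{n,z}(\theta):=\begin{pmatrix} z & -\alpha_n(\theta)\\ -\alpha_n(\theta)z & 1\end{pmatrix}=|\rho_n(\theta)|\,S_{n,z}(\theta),
\]
and pair consecutive steps into $\tilde T(\theta,z):=\tilde S_{1,z}(\theta)\tilde S_{0,z}$, a one-step analytic cocycle over rotation by $\omega$ whose entries are Laurent polynomials in $e^{2\pi i\theta}$ of degree at most one (only $\tilde S_{1,z}$ depends on $\theta$, through $\alpha_1=\lambda_2\sin(2\pi\theta)$). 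Birkhoff's theorem gives the scalar defect
\[
L(\omega,T(\cdot,z))=L(\omega,\tilde T(\cdot,z))-\ln\lambda_1-\int_\T\ln|\rho_1(\theta)|\,d\theta,
\]
where $T=S_{1,z}S_{0,z}$ satisfies $L(\omega,T(\cdot,z))=2L(\omega,S(\cdot,z))$; a Jensen computation for $y\mapsto\lambda_2(y+y^{-1})/2+i\lambda_2'$ identifies $\int_\T\ln|\rho_1|\,d\theta=\ln\frac{1+\lambda_2'}{2}$.

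Next, complexify the phase and apply Avila's theorem to $L_{\tilde T}(\varepsilon):=L(\omega,\tilde T(\cdot+i\varepsilon,z))$: this function is convex, continuous, and piecewise affine with integer right-slopes (the acceleration), bounded in absolute value by the Laurent degree, here $1$. A direct leading-order calculation as $\varepsilon\to+\infty$, using $\sin(2\pi(\theta+i\varepsilon))\sim\frac{i}{2}e^{2\pi\varepsilon}e^{-2\pi i\theta}$, shows that the renormalized dominant cocycle is a scalar times a constant $2\times 2$ matrix with eigenvalues $z(1\pm\lambda_1')$, hence
\[
L_{\tilde T}(\varepsilon)=2\pi\varepsilon+\ln\frac{\lambda_2(1+\lambda_1')}{2}+o(1)\qquad(\varepsilon\to+\infty),
\]
so the asymptotic acceleration equals $1$.

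For $z\in\sigma(W_{\lambda_1,\lambda_2,\omega,\theta})$ the Szeg\H o cocycle is not uniformly hyperbolic, and Avila's dichotomy (non-UH at positive LE forces strictly positive acceleration at $0^+$), combined with the integer quantization and degree bound, pins the acceleration on $[0,+\infty)$ to exactly $1$. By convexity $L_{\tilde T}$ is then affine with slope $2\pi$ on this half-line, so $L_{\tilde T}(0)=\ln\frac{\lambda_2(1+\lambda_1')}{2}$; subtracting the Birkhoff correction yields
\[
2L(\omega,S(\cdot,z))=\ln\frac{\lambda_2(1+\lambda_1')}{\lambda_1(1+\lambda_2')},
\]
which is the claim in the supercritical range $\lambda_1<\lambda_2$. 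In the complementary range the right-hand side is non-positive while $L\ge 0$ universally, so $L(\omega,S(\cdot,z))=0$, accounting for the $\max(0,\cdot)$ in the statement.

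The main technical point is the bookkeeping between the non-analytic normalization $|\rho_n|$ and the complex extension: one must pass to the analytic cocycle $\tilde T$, compute the Jensen integral of the normalization, and check the leading asymptotic of the polynomial part in the strip. Once that is set up, Avila's general convexity/quantization machinery together with the non-UH characterization of the spectrum fixes $L$ on $\sigma(W)$ with no further input.
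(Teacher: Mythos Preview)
The paper does not supply its own proof of this statement; it is recorded as a citation to \cite[Corollary~2.8]{CFO} and \cite[(3.25)]{FanUAMO}, and the introduction explicitly notes that the result is ``an immediate application of Avila's global theory \cite{global} \ldots\ as implemented in \cite{CFO}.'' Your proposal correctly reconstructs exactly that argument---passing to the analytic unnormalized two-step cocycle $\tilde T$, computing the asymptotic slope via the leading Laurent term, invoking quantization of acceleration and the regularity/non-UH dichotomy on the spectrum, and subtracting the Jensen correction for $|\rho_0\rho_1|$---so it matches the approach of the cited sources (the minor sign in the limiting eigenvalues, which are $z(1-\lambda_1')$ and $-z(1+\lambda_1')$, is irrelevant for the Lyapunov exponent).
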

In particular, in the regime \eqref{eq:supercritical_regime_intro} we have \eqref{eq:LE_equals_Lhalf}.

The following integral was computed in \cite{CFO}:
\begin{align}\label{eq:int_rho}
\int_{\T}\ln |\rho_{2n}\rho_{2n+1}|\, d\theta
=\int_{\T}\ln |\lambda_1(\lambda_2\cos(2\pi\theta)+i\lambda_2')|\, d\theta
=\ln\frac{\lambda_1(1+\lambda_2')}{2}.
\end{align}

Let
\begin{equation}\label{eq:def_Lpm}
L_+:=\ln\frac{\lambda_2(1+\lambda_1')}{2},
\qquad
L_-:=\ln\frac{\lambda_1(1+\lambda_2')}{2},
\qquad
L:=L_+-L_-.
\end{equation}

\begin{lemma}\label{lem:upperbounds}$\mathrm{(}$e.g.\ \cite{Furman, JMavi}$\mathrm{)}$
Let $(\omega, H)$ be a continuous cocycle. Then for any $\varepsilon>0$, for $|k|$ large enough,
\[
\|H_k(\theta)\|\leq e^{|k|(L(\omega, H)+\varepsilon)}\qquad \text{for all }\theta\in\T,
\]
where $H_k(\theta):=H(\theta+(k-1)\omega)\cdots H(\theta)$.
\end{lemma}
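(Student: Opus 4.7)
The proof is a standard application of Furman's uniform subadditive ergodic theorem for uniquely ergodic base dynamics; the plan is to supply that argument in the cocycle setting.

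Set $f_n(\theta):=\ln\|H_n(\theta)\|$. The cocycle identity $H_{n+m}(\theta)=H_n(\theta+m\omega)\,H_m(\theta)$ together with submultiplicativity of the operator norm yields the subadditive relation
\[
f_{n+m}(\theta)\le f_n(\theta+m\omega)+f_m(\theta).
\]
Since $H$ is continuous on $\T$, each $f_n$ is continuous, hence bounded on the compact torus. By Fekete's lemma applied to $n\mapsto \int_\T f_n$ and the subadditive bound, the limit defining $L(\omega,S(\cdot,z))$ exists and equals $\inf_n\tfrac1n\int f_n$.

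The main step is uniform upper control of the Birkhoff sums. Fix $\varepsilon>0$ and choose $m$ large enough that $\tfrac{1}{m}\int_\T f_m\le L(\omega,H)+\varepsilon/3$. For arbitrary $n$, write $n=qm+r$ with $0\le r<m$ and iterate the subadditive inequality to obtain
\[
f_n(\theta)\le \sum_{j=0}^{q-1}f_m(\theta+jm\omega)+f_r(\theta+qm\omega).
\]
The remainder $f_r$ is bounded in absolute value by $\max_{0\le r<m}\|f_r\|_{L^\infty(\T)}$, a constant depending only on $m$, so $f_r(\theta+qm\omega)/n\to 0$ uniformly in $\theta$ as $n\to\infty$. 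For the main sum, $m\omega$ is irrational, so rotation by $m\omega$ on $\T$ is uniquely ergodic; applied to the continuous function $f_m$ this gives
\[
\frac{1}{q}\sum_{j=0}^{q-1}f_m(\theta+jm\omega)\ \xrightarrow[q\to\infty]{}\ \int_\T f_m\qquad\text{uniformly in }\theta.
\]
Combining these two observations with $q/n\to 1/m$, for all sufficiently large $n$ and uniformly in $\theta$,
\[
\frac{f_n(\theta)}{n}\le \frac{1}{m}\int_\T f_m+\frac{2\varepsilon}{3}\le L(\omega,H)+\varepsilon,
\]
which is the desired bound for positive $k$.

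For negative $k$, use the cocycle convention $H_{-k}(\theta)=H_k(\theta-k\omega)^{-1}$, so $\|H_{-k}(\theta)\|$ is controlled by $\|H_k(\theta-k\omega)\|/|\det H_k(\theta-k\omega)|$. Since $H$ takes values in matrices with $|\det H(\cdot)|$ bounded away from $0$ and $\infty$ (for the Szeg\H{o} cocycle on the spectrum $|\det S_{n,z}|=1$), the determinant contribution is uniformly $O(|k|)$ with arbitrarily small rate after passing to an exponent, and the positive-$k$ bound transfers. The only real obstacle is the uniform convergence of Birkhoff averages, which is exactly unique ergodicity of the irrational rotation; the rest is bookkeeping.
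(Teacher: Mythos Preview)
The paper does not supply its own proof of this lemma; it simply records the statement and cites Furman and Jitomirskaya--Mavi. Your argument for positive $k$ is precisely Furman's uniform subadditive ergodic theorem specialized to cocycles over an irrational rotation, and is correct: fix $m$ with $\tfrac1m\int f_m$ close to $L$, block $n=qm+r$, and use unique ergodicity of the rotation by $m\omega$ to control the Birkhoff average of the continuous function $f_m$ uniformly in $\theta$. This is exactly the argument in the cited references.

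One small caveat on your negative-$k$ paragraph: the identity $\|A^{-1}\|=\|A\|/|\det A|$ for $2\times2$ matrices shows $\|H_{-k}(\theta)\|\le e^{k(L+\varepsilon)}$ \emph{only} when the determinant contribution is subexponential, e.g.\ when $\int_\T\ln|\det H|\,d\theta=0$. For a general continuous invertible cocycle the forward and backward top Lyapunov exponents need not agree, so the lemma as stated for $k<0$ requires an extra hypothesis. In the paper this is harmless: the Szeg\H{o} cocycle satisfies $|\det S_{n,z}|=|z|=1$ on the spectrum, and in any case every application of the lemma in Sections~5--6 (and Corollary~\ref{cor:1cocycle}) only invokes products in the forward direction. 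So your proof covers what is actually used.
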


\begin{corollary}\label{cor:1cocycle}
If $g$ is continuous and $\ln |g|\in L^1(\T)$, then for any $\varepsilon > 0$ and $b-a$ sufficiently large,
\begin{equation}\label{eq:scalar_upper}
\left|\prod_{j=a}^b g (\theta+j\omega)\right| \leq
e^{(b-a+1)\left(\int_{\T} \ln|g|\, d\theta+\varepsilon\right)}.
\end{equation}
\end{corollary}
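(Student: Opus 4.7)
The plan is to recognize the scalar product as the norm of the iterates of a one-dimensional continuous cocycle and then apply Lemma~\ref{lem:upperbounds} directly. First I would set $H(\theta):=g(\theta)$, viewed as a $1\times 1$ matrix-valued continuous map on $\T$. The iterated cocycle is then
\[
H_n(\theta)=g(\theta+(n-1)\omega)\cdots g(\theta)=\prod_{j=0}^{n-1}g(\theta+j\omega),
\]
with operator norm $\|H_n(\theta)\|=|H_n(\theta)|$.

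The second step is to compute the Lyapunov exponent of $H$. Since Lebesgue measure on $\T$ is invariant under translation by $\omega$,
\[
\frac{1}{n}\int_\T \ln|H_n(\theta)|\,d\theta=\frac{1}{n}\sum_{j=0}^{n-1}\int_\T \ln|g(\theta+j\omega)|\,d\theta=\int_\T \ln|g|\,d\theta
\]
for every $n\ge 1$, giving $L(\omega,H)=\int_\T \ln|g|\,d\theta$. Continuity of $g$ on the compact $\T$ makes $\ln^+|g|$ bounded, and the assumption $\ln|g|\in L^1(\T)$ keeps the integral finite, so this identity sits inside the standard subadditive framework used in \eqref{def:LS}.

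With the Lyapunov exponent identified, I would invoke Lemma~\ref{lem:upperbounds} with this one-dimensional cocycle: for any $\varepsilon>0$, once $k$ is sufficiently large,
\[
|H_k(\theta)|\le e^{k(L(\omega,H)+\varepsilon)}\quad\text{for every }\theta\in\T.
\]
Taking $k=b-a+1$ and substituting $\theta\mapsto\theta+a\omega$ shifts the window of iterates from $[0,k-1]$ to $[a,b]$, producing precisely \eqref{eq:scalar_upper}.

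There is essentially no obstacle; the one subtle point worth flagging is that $g$ may vanish at some points, so $H$ need not be $\mathrm{GL}(1,\R)$-valued. This is harmless: Lemma~\ref{lem:upperbounds} requires only continuity, and at any $\theta$ for which the product vanishes the inequality \eqref{eq:scalar_upper} holds trivially. Accordingly, no regularization of $g$ is needed.
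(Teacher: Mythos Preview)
Your proposal is correct and is exactly the approach the paper has in mind: the paper states Corollary~\ref{cor:1cocycle} as an immediate consequence of Lemma~\ref{lem:upperbounds} without further proof, and your argument simply spells out that specialization to the $1\times 1$ cocycle $H(\theta)=g(\theta)$, identifies $L(\omega,H)=\int_\T\ln|g|\,d\theta$, and shifts the base point to move the window to $[a,b]$. The remark about possible zeros of $g$ is a reasonable clarification but not something the paper addresses explicitly.
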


As a direct application of Corollary \ref{cor:1cocycle}, we have
\begin{equation}\label{eq:rho_bd}
\left|\prod_{j=a}^b\rho_{j}\right| \leq e^{-\frac{|b-a|}{2}(L_- +\varepsilon)}.
\end{equation}

\subsection{Continued fractions and arithmetic exponents}\label{sec:continued}
Let $\omega\in \T\setminus \Q$. Then $\omega$ has the unique continued fraction expansion
\[
\omega=\frac{1}{a_1+\frac{1}{a_2+\frac{1}{a_3+\cdots}}},\qquad a_n\in\N.
\]
Let
\begin{equation}\label{defpnqn}
\frac{p_n}{q_n}=\frac{1}{a_1+\frac{1}{a_2+\frac{1}{\cdots+\frac{1}{a_n}}}}
\end{equation}
be the continued fraction approximants of $\omega$.
We recall that
\begin{equation}\label{appro1}
\|k\omega\|_{\T} \geq \|q_{n-1}\omega\|_{\T}\quad \text{for any }0\neq |k|<q_n,
\end{equation}
and
\begin{equation}\label{appro2}
\frac{1}{2q_{n+1}} \leq \|q_n \omega \|_{\T} \leq \frac{1}{q_{n+1}}.
\end{equation}

\subsection{Poisson formula and Szeg\H{o} connection}\label{sec:Poisson}
Let $P_{[a,b],z}:=\det(z-W|_{[a,b]})$ be the Dirichlet determinant and let $\Psi$ be a generalized solution to
$W_{\lambda_1,\lambda_2,\omega,\theta}\Psi=z\Psi$. The Poisson formula below is used repeatedly in this paper:
\begin{align}\label{eq:Poisson_1}
|\Psi_y|\leq C_{\lambda_1,\lambda_2}
&\left(\prod_{j=a}^{y-1} |\rho_j|\right) \frac{|P_{[y+1,b],z}|}{|P_{[a,b],z}|}\max(|\Psi_{a-1}|, |\Psi_a|)\\
&+C_{\lambda_1,\lambda_2}\left(\prod_{j=y}^{b-1} |\rho_j|\right) \frac{|P_{[a,y-1],z}|}{|P_{[a,b],z}|}\max(|\Psi_b|, |\Psi_{b+1}|). \notag
\end{align}

We also use the standard reversed (``star'') operation on polynomials: for
$p(z)=\sum_{j=0}^d p_j z^j$,
\begin{equation}\label{eq:def_star}
p^*(z):=z^d\,\overline{p(1/\overline z)}.
\end{equation}
(Here $p^*$ denotes the reversed polynomial, not the adjoint of a matrix.)
Moreover, we use the following connection between $P_{[a,b],z}$ and the Szeg\H{o} cocycle (see \cite{Wang}):
\begin{align}\label{eq:Sze2}
S_{b,z}\cdots S_{a,z}=\frac{1}{\prod_{j=a}^b |\rho_j|}
\begin{pmatrix}
zP_{[a+1,b],z} & \dfrac{zP_{[a+1,b],z}-P_{[a,b],z}}{\alpha_{a-1}}\\[1mm]
z\left(\dfrac{zP_{[a+1,b],z}-P_{[a,b],z}}{\alpha_{a-1}}\right)^{*}
& (P_{[a+1,b],z})^{*}
\end{pmatrix}.
\end{align}

\section{Preliminary lemmas}\label{sec:key}

\begin{lemma}[Lagrange interpolation]\label{lem:Lagrange}
Let $g_n(\xi)$ be a polynomial in $\xi$ of degree at most $n$.
For any $\theta_1,\theta_2,\dots,\theta_{n+1}\in \T$, we have
\begin{equation}\label{eq:Lagrange}
g_n(\xi)= \sum_{j=1}^{n+1} g_n(\sin(2\pi\theta_j))
\prod_{\substack{\ell=1\\ \ell \neq j}}^{n+1}
\frac{\xi-\sin(2\pi \theta_{\ell})}{\sin(2\pi \theta_j)-\sin(2\pi \theta_{\ell})}.
\end{equation}
\end{lemma}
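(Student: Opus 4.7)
The plan is to reduce the identity directly to the classical Lagrange interpolation theorem on the real line, applied with the interpolation nodes $x_j := \sin(2\pi\theta_j)$ for $j=1,\dots,n+1$. Note that the right-hand side of \eqref{eq:Lagrange} is well-defined precisely when these nodes are pairwise distinct, which must be the implicit hypothesis on the $\theta_j$ (otherwise the denominators vanish). In the applications later in the paper this distinctness will be guaranteed by the specific choice of $\theta_j$ arising from the continued-fraction-scale analysis.

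The verification then proceeds in two short steps. First I would denote the right-hand side of \eqref{eq:Lagrange} by $R(\xi)$ and observe that $R$ is a polynomial in $\xi$ of degree at most $n$, since each summand is a scalar multiple of the product $\prod_{\ell\ne j}(\xi-x_\ell)$, which has degree exactly $n$. Second I would evaluate $R$ at each node $x_k$, $1\le k\le n+1$: for every $j\ne k$ the product $\prod_{\ell\ne j}(x_k-x_\ell)$ contains the factor $(x_k-x_k)=0$ and hence vanishes, while the $j=k$ term reduces to $g_n(x_k)$ after cancellation of identical numerator and denominator factors. Thus $R(x_k)=g_n(x_k)$ for every $k$.

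Combining these two observations, the polynomial $R-g_n$ has degree at most $n$ and vanishes at the $n+1$ distinct points $x_1,\dots,x_{n+1}$, so it must be identically zero, yielding \eqref{eq:Lagrange}. There is essentially no obstacle here: the lemma is a restatement of the standard Lagrange formula with the particular choice of nodes $x_j=\sin(2\pi\theta_j)$, and the only substantive point is the distinctness of these nodes, which is a hypothesis rather than something to be proved.
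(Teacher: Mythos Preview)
Your proposal is correct: this is exactly the classical Lagrange interpolation argument with nodes $x_j=\sin(2\pi\theta_j)$, and the paper states the lemma without proof as a standard fact. Your observation that distinctness of the nodes is an implicit hypothesis is also on point.
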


The following lemma from \cite{AJ1} gives a useful estimate for products of cosines appearing in our analysis.
\begin{lemma}\label{lem:nonmin}
Let $\omega\in \R\setminus \Q $, $\theta\in\R$ and $0\leq j_0 \leq q_{n}-1$ be such that
\[
|\cos \pi(\theta+j_{0}\omega)|=\inf_{0\leq j \leq q_{n}-1} |\cos \pi(\theta+j\omega)|.
\]
Then for some absolute constant $C$,
\[
-C\ln q_{n} \leq \sum_{j=0,\, j\neq j_0}^{q_{n}-1} \ln |\cos \pi (\theta+j\omega)|+(q_{n}-1)\ln2 \leq C\ln q_n.
\]
\end{lemma}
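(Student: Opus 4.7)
The sum in question is a Birkhoff average of $\ln|2\cos\pi(\cdot)|$ along the orbit $\{\theta+j\omega\}_{j=0}^{q_n-1}$, with the term closest to the singularity at $\tfrac12$ removed. Since $\int_{\T}\ln|2\cos\pi t|\,dt=0$, and since by pigeonhole together with the separation $\|k\omega\|\ge 1/(2q_n)$ from \eqref{appro1} the removed term satisfies $|2\cos\pi(\theta+j_0\omega)|=O(1/q_n)$ (contributing $-\ln q_n+O(1)$), one expects the remaining sum to be $\ln q_n+O(1)$, matching the lemma. My plan is to establish this by comparison with the equispaced case.

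First, substitute $|\cos\pi y|=|\sin\pi(y-\tfrac12)|$ and apply the classical identity
\[
\prod_{k=0}^{q_n-1}|2\sin\pi(x+k/q_n)|=2|\sin\pi q_n x|
\]
with $x=\theta-\tfrac12$ to obtain $\prod_{k=0}^{q_n-1}|2\cos\pi(\theta+k/q_n)|=2|\sin\pi q_n(\theta-\tfrac12)|\le 2$. Removing the single equispaced factor closest to $\tfrac12$, whose offset $\eta$ satisfies $|\eta|\le 1/(2q_n)$, leaves $|\sin\pi q_n\eta|/|\sin\pi\eta|$, which lies in $[2q_n/\pi,\pi q_n/2]$ by the elementary bounds $2|t|\le|\sin\pi t|\le\pi|t|$ on $|t|\le\tfrac12$. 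Thus the equispaced analog of the claim holds with $C=1+O(1)$. To transfer this to the actual orbit, observe that $j\omega\equiv jp_n/q_n+j(\omega-p_n/q_n)\pmod 1$ with $|j(\omega-p_n/q_n)|\le 1/q_{n+1}$, and that $j\mapsto jp_n\bmod q_n$ is a bijection of $\{0,\dots,q_n-1\}$; consequently the orbit $\{j\omega\bmod 1\}_{j=0}^{q_n-1}$ is an $O(1/q_{n+1})$-perturbation of the equispaced lattice $\{k/q_n\}_{k=0}^{q_n-1}$.

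For the bulk indices $j\ne j_0$, ordered by distance to $\tfrac12$ so that the $k$-th closest satisfies $\|\theta+j\omega-\tfrac12\|\ge k/(4q_n)$ by the separation \eqref{appro1}, the perturbation changes $\ln|2\cos\pi(\theta+j\omega)|$ by at most $O\bigl((1/q_{n+1})/(k/q_n)\bigr)=O(q_n/(kq_{n+1}))$, and summing over $k=1,\dots,q_n-1$ contributes $O\bigl((q_n/q_{n+1})\ln q_n\bigr)\le C\ln q_n$.

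The main obstacle is the resonance. When $|y_{j_0}-\tfrac12|$ is comparable to or smaller than $1/q_{n+1}$, the equispaced minimizer $\bar y_{k_0}=\theta+k_0/q_n$ and the true orbital minimizer $y_{j_0}$ may be different indices, both close to $\tfrac12$, and their logarithmic contributions must be tracked jointly. Either way, both the full orbital product and the removed factor vanish of the same order at resonant $\theta$, so the ratio $\prod_{j\ne j_0}|2\cos\pi(\theta+j\omega)|$ extends continuously through such $\theta$; a direct computation at a zero, via $\tfrac{d}{d\theta}\prod_{j=0}^{q_n-1}2\cos\pi(\theta+j\omega)$ at a simple root, gives a value $\asymp q_n$. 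Combined with the bulk perturbation estimate, this yields $\bigl|\sum_{j\ne j_0}\ln|\cos\pi(\theta+j\omega)|+(q_n-1)\ln 2\bigr|\le C\ln q_n$.
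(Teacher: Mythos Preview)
The paper does not give its own proof of this lemma; it quotes it from Avila--Jitomirskaya \cite{AJ1}. Your approach---the product identity $\prod_{k=0}^{q_n-1}2|\sin\pi(x+k/q_n)|=2|\sin\pi q_nx|$, followed by a perturbation from the equispaced lattice to the true orbit using $|j(\omega-p_n/q_n)|\le 1/q_{n+1}$---is exactly the standard route, and your equispaced computation and your bulk perturbation estimate $\sum_{k\ge 1} O\bigl(q_n/(kq_{n+1})\bigr)=O(\ln q_n)$ are both correct.

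The gap is in your resonance paragraph. Your ``derivative at a simple root'' argument is circular: differentiating the full orbital product at a root $\theta_0$ gives $\pm 2\pi\cdot\prod_{j\ne j_0}2\cos\pi(\theta_0+j\omega)$, which is (up to the harmless factor $2\pi$) precisely the quantity you are trying to show is $\asymp q_n$. Comparing to the equispaced derivative $2\pi q_n\cos\pi q_n x$ does not help directly, because the zero sets of the orbital and equispaced products differ. Likewise, the continuity of $\theta\mapsto\prod_{j\ne j_0(\theta)}$ is true but does not by itself produce a uniform bound.

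A clean fix is to remove from the orbital product not the orbital minimizer $j_0$ but the index $j_1:=\sigma^{-1}(k_0)$ corresponding to the equispaced minimizer. Then every surviving factor has equispaced counterpart at distance $\ge 1/(2q_n)$ from the singularity, your bulk perturbation bound applies term by term, and you get $\prod_{j\ne j_1}=q_n^{1+O(1)}$. Finally, compare the two removed factors: if $j_1=j_0$ you are done; if not, both $\|\theta_{j_0}\|$ and $\|\theta_{j_1}\|$ lie in an interval of length $O(1/q_{n+1})$ near zero while each is at least $\|q_{n-1}\omega\|-1/q_{n+1}$ away from the next orbital point, so their ratio is bounded above and below by absolute constants, giving $\prod_{j\ne j_0}=\prod_{j\ne j_1}\cdot|\sin\pi\theta_{j_1}|/|\sin\pi\theta_{j_0}|=q_n^{1+O(1)}$.
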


\begin{lemma}\label{lem:main}\cite[Lemma 4.1]{FanUAMO}
There exists a polynomial $g_n(\xi)$ in $\xi$ of degree at most $n$ such that
\[
P_{[1,2n],z}(\theta)=g_n\!\left(\sin\bigl(2\pi (\theta+\tfrac{n-1}{2}\omega)\bigr)\right).
\]
\end{lemma}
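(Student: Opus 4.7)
My plan is to combine a bound on the trigonometric degree of $P_{[1,2n],z}(\theta)$ with a reflection symmetry, and then to invoke a standard decomposition fact for reflection-invariant trig polynomials. For the degree bound I would read off $P_{[1,2n],z}(\theta)$ from the Szegő identity \eqref{eq:Sze2} at $a=0$, $b=2n$: the $(1,1)$ entry of $S_{2n,z}\cdots S_{0,z}$ equals $zP_{[1,2n],z}(\theta)/\prod_{j=0}^{2n}|\rho_j|$. By \eqref{def:alpha_n}, the even-indexed factors $S_{2k,z}$ are $\theta$-independent, while each odd factor $S_{2k+1,z}$ is affine in $s_k:=\sin(2\pi(\theta+k\omega))$ once the scalar prefactor $|\rho_{2k+1}|^{-1}$ is peeled off. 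Multiplying back by $\prod_j|\rho_j|$ cancels those scalars, so expanding the product exhibits $P_{[1,2n],z}(\theta)$ as a multilinear polynomial of total degree at most $n$ in $(s_0,\dots,s_{n-1})$, hence a trig polynomial in $\theta$ of degree at most $n$.

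The crucial step is to establish the reflection symmetry $P_{[1,2n],z}(\tfrac{1}{2}-\theta-(n-1)\omega) = P_{[1,2n],z}(\theta)$. Using $\sin(\pi-x)=\sin x$, this substitution sends $s_k$ to $s_{n-1-k}$, so equivalently it reverses the ordering of the odd-indexed Verblunsky coefficients inside $[1,2n]$ while leaving the constant even-indexed ones unchanged. I would prove this by exhibiting an explicit unitary $U$ on $\ell^2([1,2n])$ (basis reversal dressed with intra-block swaps and phases to reconcile the parity mismatch of CMV blocks under $j\mapsto 2n+1-j$) such that $U^*\,W_{\lambda_1,\lambda_2,\omega,\theta}|_{[1,2n]}\,U = W_{\lambda_1,\lambda_2,\omega,\theta'}|_{[1,2n]}$ with $\theta' = \tfrac{1}{2}-\theta-(n-1)\omega$; equal characteristic polynomials then yield the symmetry. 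This is where I expect the main difficulty to lie: the naive reversal $j\mapsto 2n+1-j$ swaps parities and therefore does not commute with the decomposition $W=\mathcal L\mathcal M$, so identifying the correct intertwining unitary requires careful bookkeeping at the boundary blocks $j=1$ and $j=2n$ as well as a parity-dependent gauge on the basis.

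Granted these two ingredients, the conclusion is immediate: set $\phi := 2\pi(\theta + \tfrac{n-1}{2}\omega)$ so that the reflection becomes $\phi \leftrightarrow \pi-\phi$. Every trig polynomial in $\phi$ of degree at most $n$ decomposes as $A(\cos\phi)+\sin\phi\, B(\cos\phi)$, and reflection invariance forces both $A$ and $B$ to be even polynomials. Substituting $\cos^{2j}\phi = (1-\sin^2\phi)^j$ then converts everything into a polynomial $g_n$ in $\sin\phi$ of degree at most $n$, which is the claim.
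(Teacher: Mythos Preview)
The paper does not prove this lemma here; it is quoted verbatim from \cite{FanUAMO}. So there is no in-paper argument to compare against, and I will simply assess your plan.

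Your three-step strategy---degree bound via the Szeg\H{o} product, reflection symmetry, then the Chebyshev-type reduction to a polynomial in $\sin\phi$---is correct and is essentially the argument one finds in the reference. Steps~1 and~3 are clean as written.

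For step~2, however, the unitary-conjugation route you sketch is the hard way in, and the sign/parity obstructions you anticipate are real: the bare reversal $R:j\mapsto 2n+1-j$ sends each block $\Theta(\alpha)$ to $\sigma_1\Theta(\alpha)\sigma_1=\Theta(-\overline{\alpha})$, so $R\,W|_{[1,2n]}\,R$ carries Verblunsky data $-\alpha_{2n-j}$ rather than $\alpha_{2n-j}$. No diagonal gauge can undo that global sign (diagonal conjugation fixes the diagonal of each $\Theta$), so you would need a further nontrivial identity to finish. This can be made to work, but it is unnecessary.

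A much shorter proof of the reflection symmetry stays inside the Szeg\H{o} picture you already set up in step~1. With $T_j:=|\rho_j|S_{j,z}$ and all $\alpha_j$ real, one checks directly that
\[
J^{-1}T_jJ=T_j^{\,T},\qquad J:=\begin{pmatrix}1&0\\0&z\end{pmatrix}.
\]
Hence $T_{2n}\cdots T_0=J\,(T_0\cdots T_{2n})^{T}J^{-1}$, and since conjugation by $J$ and transposition both preserve the $(1,1)$ entry, $(T_{2n}\cdots T_0)_{11}=(T_0\cdots T_{2n})_{11}$. By \eqref{eq:Sze2} this is exactly $zP_{[1,2n],z}(\theta)=zP_{[1,2n],z}(\tfrac12-\theta-(n-1)\omega)$, the reversal $s_k\leftrightarrow s_{n-1-k}$ you want. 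With this in hand, your step~3 goes through verbatim.
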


This implies that $P_{[1,2n],z}$ cannot be uniformly small at $n+1$ well-distributed choices of $\theta$.

\begin{lemma}\label{lem:ave_low}\cite[Lemma 4.2]{FanUAMO}
For any $\varepsilon>0$, for $n>N(\varepsilon)$ large enough,
\[
\frac{1}{2n}\int_{\T} \ln|P_{[1,2n],z}(\theta)|\, d\theta\geq
\frac{1}{2}\ln\!\left(\frac{\lambda_2(1+\lambda_1')}{2}\right)-\varepsilon
\geq \frac{L_+}{2}-\varepsilon.
\]
\end{lemma}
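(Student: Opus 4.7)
The plan is to realize $P_{[1,2n],z}(\theta)$ as a Laurent polynomial in $w = e^{2\pi i\theta}$, compute its top-degree coefficient explicitly via the Szegő cocycle structure, and invoke Jensen's inequality on $\partial\D$.

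First I would apply \eqref{eq:Sze2} with $a=0$, $b=2n$ to write $zP_{[1,2n],z}(\theta) = [\hat\Phi_{0,2n}]_{11}$, where $\hat\Phi_{0,2n} := \hat S_{2n,z}\hat S_{2n-1,z}\cdots\hat S_{0,z}$ and $\hat S_{j,z} := |\rho_j|\,S_{j,z} = \begin{pmatrix}z & -\bar\alpha_j \\ -\alpha_j z & 1\end{pmatrix}$. Using \eqref{def:alpha_n}, the even-index factors collapse to the $\theta$-independent matrix $T := \begin{pmatrix}z & -\lambda_1' \\ -\lambda_1'z & 1\end{pmatrix}$, while each odd-index factor $\hat S_{2k+1,z}$ depends on $\theta$ linearly through $\sin(2\pi(\theta+k\omega)) = \tfrac{1}{2i}(w\zeta_k - w^{-1}\zeta_k^{-1})$ with $\zeta_k := e^{2\pi ik\omega}$. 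It follows that $\hat\Phi_{0,2n}$, and hence $P_{[1,2n],z}(\theta) = \sum_{|k|\leq n}p_k(z)w^k$, is a Laurent polynomial in $w$ supported on $[-n,n]$.

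Next I would extract the leading coefficient $p_n(z)$ by retaining the $w^1$-part $-\tfrac{\lambda_2\zeta_k}{2i}N$ of each odd factor, with $N := \begin{pmatrix}0 & 1 \\ z & 0\end{pmatrix}$. This produces $\mathrm{coeff}_{w^n}(\hat\Phi_{0,2n}) = \bigl(-\tfrac{\lambda_2}{2i}\bigr)^n\prod_{k=0}^{n-1}\zeta_k\cdot(TN)^nT$. A direct multiplication gives $TN = zA$ with $A := \begin{pmatrix}-\lambda_1' & 1 \\ 1 & -\lambda_1'\end{pmatrix}$; diagonalizing $A$ in the eigenbasis $(1,\pm 1)$ with eigenvalues $1-\lambda_1'$ and $-(1+\lambda_1')$ yields $(A^nT)_{11} = \tfrac{z}{2}\bigl[(1-\lambda_1')^{n+1} + (-1)^n(1+\lambda_1')^{n+1}\bigr]$. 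For $|z|=1$ (the regime in which the lemma is applied) this gives
\[
|p_n(z)| = \bigl(\tfrac{\lambda_2}{2}\bigr)^n\cdot\tfrac{1}{2}\bigl|(1-\lambda_1')^{n+1}+(-1)^n(1+\lambda_1')^{n+1}\bigr|.
\]
The supercritical regime \eqref{eq:supercritical_regime_intro} forces $\lambda_1<1$ and hence $\lambda_1'>0$, so the $(1+\lambda_1')^{n+1}$ summand dominates once $n\geq N(\lambda_1')$, giving $|p_n(z)|\geq C_{\lambda_1'}\bigl(\tfrac{\lambda_2(1+\lambda_1')}{2}\bigr)^n = C_{\lambda_1'}e^{nL_+}$.

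Finally I would apply Jensen's inequality to $\tilde P(w) := w^n P_{[1,2n],z}(\theta)$, which is a polynomial in $w$ of degree $\leq 2n$ with leading coefficient $p_n(z)$:
\[
\int_\T\ln|P_{[1,2n],z}(\theta)|\,d\theta = \int_{\partial\D}\ln|\tilde P(w)|\,\tfrac{|dw|}{2\pi} \geq \ln|p_n(z)| \geq nL_+ - O(1).
\]
Dividing by $2n$ and taking $n$ large enough to absorb the $O(1/n)$ tail into $\varepsilon$ completes the argument. The main technical hurdle is the clean evaluation of $(A^nT)_{11}$: the diagonalization of $A$ concentrates the exponential growth $(1+\lambda_1')^{n+1}$ precisely in the $(1,1)$-entry, matching $e^{nL_+}$ after the $(\lambda_2/2)^n$ scaling. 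Morally this is forced by the Lyapunov formula $L(\omega,S(\cdot,z))=L/2$ (Theorem~\ref{thm:LE}) combined with $\int\ln|\rho_{2n}\rho_{2n+1}|\,d\theta = L_-$ from \eqref{eq:int_rho}, but the algebraic identity $TN = zA$ is what converts an asymptotic cocycle-norm estimate into an entry-wise lower bound accessible via Jensen.
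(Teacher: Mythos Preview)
The paper does not prove this lemma here; it is quoted from \cite[Lemma~4.2]{FanUAMO} without argument. Your proof is correct and self-contained. The identification $TN=zA$ with $A=\begin{pmatrix}-\lambda_1'&1\\1&-\lambda_1'\end{pmatrix}$ and the diagonalization of $A$ give the top Laurent coefficient exactly, and the final step $\int_{\partial\D}\ln|\tilde P|\,\tfrac{|dw|}{2\pi}\ge\ln|p_n|$ is the Mahler/Jensen bound: writing $\tilde P(w)=p_n\prod_j(w-a_j)$ and using $\int_{\partial\D}\ln|w-a|\,\tfrac{|dw|}{2\pi}=\ln^+|a|\ge0$. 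Two minor remarks. First, your Laurent expansion in $w=e^{2\pi i\theta}$ of half-width $n$ is equivalent to (and in fact reproves) Lemma~\ref{lem:main}, since a degree-$n$ polynomial in $\sin 2\pi\theta'$ is precisely a Laurent polynomial of degree $n$ in $e^{2\pi i\theta'}$; your $p_n$ is, up to a unimodular factor, the leading coefficient of $g_n$. Second, the only use of the supercritical hypothesis \eqref{eq:supercritical_regime_intro} is to ensure $\lambda_1<1$, hence $\lambda_1'>0$, so that $(1+\lambda_1')^{n+1}$ strictly dominates $(1-\lambda_1')^{n+1}$ and $p_n(z)\neq0$; this is indeed needed.
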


\section{An outline of the proof}\label{sec:outline}
By Shnol's theorem (see, e.g., \cite{Shnol57,Simon81,HanSchnol}), in order to prove Anderson localization, it suffices to show that every generalized eigenfunction $\Psi$ solving
$W_{\lambda_1,\lambda_2,\omega,\theta}\Psi=z\Psi$, normalized by $\|(\Psi_0,\Psi_{-1})\|=1$ and satisfying a polynomial bound $|\Psi_y|\leq C(1+|y|)$ for some $C>0$, decays exponentially.

Fix $0<\varepsilon_0<\frac{1}{100}(L-\beta(\omega))$.
Choose $\varepsilon>0$ such that
\begin{equation}\label{eq:choose_eps}
(4\beta(\omega)+250)\varepsilon<\varepsilon_0.
\end{equation}
We prove exponential decay of $\Psi_y$ at scale $n$, i.e.\ for $q_n^{1-\varepsilon}<|y|<4q_{n+1}^{1-\varepsilon}$, for all large enough $n$.
Without loss of generality, we assume $y>0$ throughout.

We divide scales into two cases.

\underline{Strong Liouville scale: $q_{n+1}>e^{\varepsilon q_n}$.}
The proof of exponential decay at such a scale is the main challenge.
We divide the interval $[q_n^{1-\varepsilon}, 4q_{n+1}^{1-\varepsilon}]$ into resonant regimes (sites near multiples of $2q_n$) and non-resonant regimes (sites between two consecutive such multiples). The non-resonant and resonant analyses are presented in Sections~\ref{sec:non_res} and \ref{sec:res}, respectively.

\smallskip\noindent

\underline{Weak Liouville scale: $q_{n+1}\leq e^{\varepsilon q_n}$.}
The proof of exponential decay at such a scale is similar to the Diophantine-frequency analysis in \cite{FanUAMO}. We briefly discuss this case in Section~\ref{sec:Diophantine}, and only focus on the additional technical difficulties compared to the Diophantine case.

\smallskip\noindent

\section{Eigenfunction in the non-resonant regimes in the strong Liouville case}\label{sec:non_res}
We consider $q_n^{1-\varepsilon}<y<10q_{n+1}^{1-\varepsilon}$ with $y$ sufficiently large (hence $n$ large).
We call $y$ \emph{resonant} at scale $q_n$ if $\mathrm{dist}(y,2\Z q_n)\leq q_n^{1-\varepsilon}$; otherwise we call $y$ \emph{non-resonant}.

For $t\in \Z$, denote the resonance interval by
\[
R_{2t}:=\bigl[\,2tq_n-q_n^{1-\varepsilon},\,2tq_n+q_n^{1-\varepsilon}\,\bigr].
\]
Let $r_{2t}:=\max_{y\in R_{2t}} |\Psi_y|$.
Define the non-resonance interval
\[
\mathcal{N}_{2(t-1),2t}:=\bigl[(2t-2)q_n+q_n^{1-\varepsilon},\,2tq_n-q_n^{1-\varepsilon}\bigr].
\]

The goal of this section is to prove the following:
\begin{theorem}\label{lem:non-res}
For any $y\in \mathcal{N}_{2(t-1),2t}$,
\[
|\Psi_{y}|\leq
e^{-\left(\frac{L}{2}-12\varepsilon_0\right)\,(\mathrm{dist}(y,2\Z q_n)-q_n^{1-\varepsilon})}\,
\max(r_{2(t-1)}, r_{2t}).
\]
\end{theorem}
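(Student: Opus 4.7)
The plan is to apply the Poisson formula \eqref{eq:Poisson_1} on an interval $[a,b]$ bridging the resonance regions $R_{2(t-1)}$ and $R_{2t}$, combined with matching upper and lower bounds on the finite-volume Dirichlet determinants. I would choose $a\in R_{2(t-1)}$, $b\in R_{2t}$ with $b-a+1=2m$ and $m$ comparable to $q_n$, so that the boundary values in \eqref{eq:Poisson_1} satisfy $\max(|\Psi_{a-1}|,|\Psi_a|)\leq r_{2(t-1)}$ and $\max(|\Psi_b|,|\Psi_{b+1}|)\leq r_{2t}$. The $O(q_n^{1-\varepsilon})$ admissible shifts of $a$ within $R_{2(t-1)}$ will be used to produce a good lower bound on the denominator $|P_{[a,b],z}(\theta)|$. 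For the numerator polynomials and the $\rho$-products, Lemma \ref{lem:upperbounds} applied to the Szeg\H{o} cocycle, together with Theorem \ref{thm:LE} and \eqref{eq:rho_bd}, yields
\begin{equation*}
|P_{[c,d],z}(\theta)|\leq e^{(d-c+1)L_+/2+\varepsilon(d-c)},\qquad \prod_{j=c}^{d}|\rho_j|\leq e^{(d-c+1)L_-/2+\varepsilon(d-c)}.
\end{equation*}

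The crux is the matching lower bound $|P_{[a,b],z}(\theta)|\geq e^{m L_+-O(\varepsilon_0 q_n)}$ for a suitable shift of $a$. By Lemma \ref{lem:main}, $P_{[a,b],z}(\theta)=g_m(\sin 2\pi(\theta+c_a\omega))$ with $c_a$ incrementing by $1$ each time $a$ shifts by $2$; by Lemma \ref{lem:ave_low} and Jensen, $\sup_{\phi\in\T}|g_m(\sin 2\pi\phi)|\geq e^{m L_+-2m\varepsilon}$. Applying Lemma \ref{lem:Lagrange} at nodes $\phi_k=\theta+(c_0+k)\omega$, $k=0,\dots,m$, I would argue by contradiction that not all $|g_m(\sin 2\pi\phi_k)|$ can be $\leq e^{m L_+-m\delta}$: otherwise the Lagrange identity together with bounds on the Lagrange basis would force $|g_m|$ to be too small at the supremum point, contradicting Jensen. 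The Lagrange denominators factor via $\sin A-\sin B=2\cos\tfrac{A+B}{2}\sin\tfrac{A-B}{2}$ into a cosine product $\prod_\ell|\cos\pi(2\theta+(2c_0+k+\ell)\omega)|$, bounded below by Lemma \ref{lem:nonmin} for all but the smallest factor and by the non-resonance $\tilde\gamma(\omega,\theta)=0$ for the smallest one, and a sine product $\prod_\ell|\sin\pi(k-\ell)\omega|$, controlled by the classical identity $\prod_{k=1}^{q_n-1}|2\sin\pi k\omega|=q_n$ together with the hypothesis $\beta(\omega)<L$. This produces some $k_*$ with $|g_m(\sin 2\pi\phi_{k_*})|\geq e^{m L_+-O(\varepsilon_0 q_n)}$, and I would apply the Poisson formula on the correspondingly shifted interval.

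Plugging these bounds into \eqref{eq:Poisson_1} gives
\begin{equation*}
|\Psi_y|\leq C e^{-(y-a)L/2+O(\varepsilon_0 q_n)}r_{2(t-1)}+C e^{-(b-y)L/2+O(\varepsilon_0 q_n)}r_{2t},
\end{equation*}
and since $y-a\geq\mathrm{dist}(y,2(t-1)q_n)-q_n^{1-\varepsilon}$, $b-y\geq\mathrm{dist}(y,2tq_n)-q_n^{1-\varepsilon}$, and $\mathrm{dist}(y,2\Z q_n)$ is the minimum of the two distances, the claimed bound follows by absorbing the $O(\varepsilon_0 q_n)$ error into the rate $L/2\to L/2-12\varepsilon_0$. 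The main obstacle will be ensuring that the good shift $k_*$ lies within the small flexibility window $[0,q_n^{1-\varepsilon}]$ coming from the resonance-region width, rather than in the full Lagrange range $[0,m]$; this is precisely where the hypothesis $\beta(\omega)<L$ is used most sharply, since the Lyapunov growth $e^{m L}$ must strictly dominate the Liouville small-divisor losses $e^{O(\beta(\omega) q_n)}$ in order to force a good $k_*$ to appear within this narrow window.
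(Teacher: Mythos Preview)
Your proposal has a genuine gap at exactly the point you flag as ``the main obstacle,'' and the hoped-for resolution via $\beta(\omega)<L$ does not work. The Lagrange interpolation argument you sketch uses $m+1\sim q_n$ nodes $\phi_k=\theta+(c_0+k)\omega$, $k=0,\dots,m$, and the contradiction with Lemma~\ref{lem:ave_low} only yields that \emph{some} $k_*\in\{0,\dots,m\}$ has $|g_m(\sin2\pi\phi_{k_*})|$ large. There is no mechanism by which the inequality $L>\beta(\omega)$ forces $k_*$ into the tiny sub-window $[0,q_n^{1-\varepsilon}]$: the Lagrange denominators and numerators you estimate are the same for every node, so the argument is symmetric in $k$ and cannot distinguish a preferred location. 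Consequently the Poisson interval $[a+2k_*,b+2k_*]$ will generically have its endpoints \emph{outside} $R_{2(t-1)}\cup R_{2t}$, and the boundary terms $|\Psi_{a+2k_*-1}|,|\Psi_{b+2k_*}|$ are not controlled by $r_{2(t-1)},r_{2t}$.

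The paper's proof supplies the missing idea. Rather than a single window of shifts, it takes the $h/2+1$ interpolation nodes from a \emph{union} $I_0\cup I_y$ of two disjoint windows: one near the origin and one near $y$, with interval length $h\sim\mathrm{dist}(y,2\Z q_n)$ adapted to an intermediate continued-fraction scale $q_m$ (not $q_n$). The Lagrange/averaging contradiction then forces a good node in $I_0\cup I_y$; the crucial extra input is the normalization $\|(\Psi_0,\Psi_{-1})\|=1$, which via Poisson rules out $I_0$ (a good node there would make $\Psi_0,\Psi_{-1}$ exponentially small). Hence the good node must lie in $I_y$, yielding a Poisson interval centered at $y$ with $\min(|y-x_1|,|y-x_2|)\gtrsim\mathrm{dist}(y,2\Z q_n)/8$. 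Because this step does not land directly in the resonance region, the bound is then \emph{iterated} (moving $y$ to a new point closer to $R_{2(t-1)}$ or $R_{2t}$ each time) until a boundary point enters a resonance region or enough factors have accumulated. Both the two-window scheme and the iteration are essential and absent from your plan.
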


Throughout the proof, the following choice of $m$ and $s$ are crucial. For 
$y\in \mathcal{N}_{2(t-1),2t}$, let $m$  be the large integer such that $\mathrm{dist}(y,2\Z q_n) \geq 8q_m$, then choose $s$ to be the largest integer such that $8sq_m \leq \mathrm{dist}(y,2\Z q_n)< 8(s+1)q_m $. Due to such choices, clearly one has
\begin{small}
    \begin{align}\label{eq:s_qm}
\max(q_n^{1-\varepsilon}, 8sq_m)\leq \mathrm{dist}(y,2\Z q_n)<\min(8(s+1)q_m, 8q_{m+1})\leq \min(16s q_m, 8q_{m+1}).
\end{align}
\end{small}

Let $h=8sq_m-2$ and set 
\begin{align}\label{def:I0Iy}
\begin{cases}
I_0:=[-7sq_m,-5sq_m+1]\cap (2\Z+1),\\
I_y:=[y-7sq_m+2,y-sq_m+1]\cap (2\Z+1).
\end{cases}
\end{align}
Note that $I_0\cap I_y=\emptyset$ and $|I_0|+|I_y|=4sq_m$.

As an overview of the section, we will first prove in Lemma \ref{lem:I1_I2_large_nonres} that there exists (at least) one $x_1\in I_0\cup I_y$ such that $|P_{[x_1,x_1+h-1]}(\theta)|$ has the expected lower bound. We will then prove by contradiction that such an $x_1$ can not be in $I_0$, since that would lead to exponential decay of $|\Psi_0|$ and $|\Psi_{-1}|$. Hence such $x_1$ must be in $I_y$, see Lemma \ref{lem:I2_large_nonres}. The existence of $x_1\in I_y$ will be fed into the Poisson formula to yield exponential decay of $|\Psi_y|$, which leads to the proof of Theorem \ref{lem:non-res}.

\begin{lemma}\label{lem:I1_I2_large_nonres}
For $y$ large enough,
\begin{align}
    \max_{x_1\in I_0\cup I_y} |P_{[x_1,x_1+h-1]}(\theta)|\geq e^{\frac{h}{2}({L_+}-2\varepsilon_0)}.
\end{align}
\end{lemma}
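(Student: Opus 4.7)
I will argue by contradiction: if the claimed lower bound on $M:=\max_{x_1\in I_0\cup I_y}|P_{[x_1,x_1+h-1],z}(\theta)|$ fails, then Lagrange interpolation of $P_{[1,h],z}$ at the $\tilde n+1$ nodes associated with $I_0\cup I_y$, combined with Lemma~\ref{lem:ave_low}, will force an inequality on the Lagrange denominators that cannot be met under the arithmetic hypotheses on $\omega$ and $\theta$. The first step is a reduction: since $h=8sq_m-2$ is even and every $x_1\in I_0\cup I_y$ is odd, the shift by $x_1-1$ respects the period-two block structure of the extended CMV operator, so
\[
P_{[x_1,x_1+h-1],z}(\theta)=P_{[1,h],z}\!\left(\theta+\tfrac{x_1-1}{2}\omega\right),
\]
and Lemma~\ref{lem:main} (with $h=2\tilde n$ and $\tilde n:=4sq_m-1$) gives a polynomial $g_{\tilde n}$ of degree $\leq\tilde n$ satisfying $P_{[x_1,x_1+h-1],z}(\theta)=g_{\tilde n}(\sin 2\pi\Theta_{x_1})$ for $\Theta_{x_1}:=\theta+(\tfrac{x_1-1}{2}+\tfrac{\tilde n-1}{2})\omega$. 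The crucial numerical matching is $|I_0\cup I_y|=4sq_m=\tilde n+1$, which is exactly the number of interpolation nodes needed to recover $g_{\tilde n}$.

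Assuming $M<e^{(h/2)(L_+-2\varepsilon_0)}$, Lemma~\ref{lem:Lagrange} yields for every $\theta'\in\T$
\[
|g_{\tilde n}(\sin 2\pi\theta')|\leq (\tilde n+1)\,M\,\max_j\prod_{\ell\neq j}\frac{|\sin 2\pi\theta'-\sin 2\pi\Theta_{x_{1,\ell}}|}{|\sin 2\pi\Theta_{x_{1,j}}-\sin 2\pi\Theta_{x_{1,\ell}}|}.
\]
Using $\sin A-\sin B=2\sin(\tfrac{A-B}{2})\cos(\tfrac{A+B}{2})$, each difference $\Theta_{x_{1,j}}-\Theta_{x_{1,\ell}}=k_{j,\ell}\omega$ has integer $k_{j,\ell}:=\tfrac{x_{1,j}-x_{1,\ell}}{2}$ (since $x_{1,j}\pm x_{1,\ell}$ is even) and each sum $\Theta_{x_{1,j}}+\Theta_{x_{1,\ell}}=2\theta+c_{j,\ell}\omega$ has integer $c_{j,\ell}$; the resulting factors of $2$ cancel between numerator and denominator. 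I would then take $\ln$, integrate in $\theta'$ over $\T$, invoke Lemma~\ref{lem:ave_low} to lower-bound the left side by $\tilde n(L_+-2\varepsilon)$, and use the Mahler identity $\int_\T\ln|\sin 2\pi\theta'-c|\,d\theta'=-\ln 2$ (valid for $|c|\leq 1$) to evaluate the numerator integral exactly as $-2\tilde n\ln 2$. This produces an inequality of the shape
\[
\ln M\geq \tilde n(L_+-2\varepsilon)+2\tilde n\ln 2+\min_j\ln D_j-O(\ln\tilde n),
\]
where $D_j:=\prod_{\ell\neq j}|\sin\pi k_{j,\ell}\omega|\cdot|\cos\pi(2\theta+c_{j,\ell}\omega)|$.

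The hard part is lower-bounding $\min_j\bigl(4^{\tilde n}D_j\bigr)$ by $e^{-2(\varepsilon_0-\varepsilon)\tilde n}$, which then closes the contradiction. For within-$I_0$ and within-$I_y$ pairs, the indices $k_{j,\ell}$ and $c_{j,\ell}$ run through at most $3sq_m$ consecutive integers, and since $sq_m<q_{m+1}$ by the construction of $s$ in \eqref{eq:s_qm}, the continued-fraction bounds \eqref{appro1}--\eqref{appro2} give $\|k\omega\|\geq 1/(2q_{m+1})$ for each nonzero $k$ in range; decomposing into blocks of length $q_m$ and applying Lemma~\ref{lem:nonmin} block-by-block then controls $\prod|2\sin\pi k_{j,\ell}\omega|$ up to a factor $q_m^{-O(s)}$. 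The cosine product is handled by the same block decomposition together with the identification $|\cos\pi(2\theta+c\omega)|=|\sin\pi(2\theta-\tfrac12+c\omega)|$: the non-resonance condition $\tilde\gamma(\omega,\theta)=0$ supplies, in each block, a sub-exponential lower bound $e^{-\varepsilon|c_{\text{bad}}|}$ on the single resonant factor that Lemma~\ref{lem:nonmin} does not directly cover. The cross-group contributions between $I_0$ and $I_y$, whose indices may be as large as $\sim y\leq 10q_{n+1}^{1-\varepsilon}$, supply only $|I_0|=sq_m$ terms per product and are absorbed by the Liouville threshold $\beta(\omega)<L$ through the slack \eqref{eq:choose_eps}. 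Summing all the sub-exponential losses and exploiting $\varepsilon\ll\varepsilon_0$ yields $\min_j\ln D_j+2\tilde n\ln 2\geq -2(\varepsilon_0-\varepsilon)\tilde n$, hence $\ln M\geq \tilde n(L_+-2\varepsilon_0)=(h/2)(L_+-2\varepsilon_0)$, contradicting the assumption on $M$.
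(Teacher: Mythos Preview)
Your contradiction set-up, the shift/parity reduction to $g_{\tilde n}$, the count $|I_0\cup I_y|=\tilde n+1$, and the plan for the denominators $D_j$ (block decomposition at scale $q_m$, Lemma~\ref{lem:nonmin}, continued-fraction bounds, and the phase non-resonance $\tilde\gamma(\omega,\theta)=0$) all match the paper's argument. The gap is in your treatment of the \emph{numerator}. After bounding $|g_{\tilde n}(\sin 2\pi\theta')|\le (\tilde n+1)\,M\,\max_j|L_j(\theta')|$ you take $\ln$, integrate in $\theta'$, and ``evaluate the numerator integral exactly as $-2\tilde n\ln 2$'' via the Mahler identity. That evaluation is only valid for a \emph{fixed} $j$; here the maximizing $j$ depends on $\theta'$, and one only has
\[
\int_{\T}\max_j\ln|L_j(\theta')|\,d\theta'\ \ge\ \max_j\int_{\T}\ln|L_j(\theta')|\,d\theta'=-2\tilde n\ln 2-\min_j\ln D_j,
\]
so the inequality points the wrong way. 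Your displayed conclusion $\ln M\ge \tilde n(L_+-2\varepsilon)+2\tilde n\ln 2+\min_j\ln D_j-O(\ln\tilde n)$ therefore does not follow from the integration step as written.

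The paper avoids this by never integrating the kernel: it bounds $\sup_{\tilde\theta}\max_j|U(\cos 2\pi\tilde\theta,j,h,I_0\cup I_y)|$ pointwise. The numerator $\sum_1$ is handled by applying Lemma~\ref{lem:nonmin} block-by-block \emph{in $\tilde\theta$} (not Mahler), giving $\sum_1\le 4sq_m(-\ln 2+\varepsilon)$ uniformly; combined with the denominator lower bound $\sum_2\ge -8sq_m\ln 2-O(\varepsilon sq_m)$ (exactly your $D_j$ analysis) one obtains $\sup|U|\le e^{\varepsilon_0 h/2}$, hence $\sup_\theta|P_{[1,h],z}|\le e^{(h/2)(L_+-\varepsilon_0)}$, contradicting Lemma~\ref{lem:ave_low} since $\sup|P|\ge\exp\bigl(\int\ln|P|\bigr)$. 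If you replace your Mahler step by this pointwise use of Lemma~\ref{lem:nonmin} on the numerator, the rest of your plan goes through and coincides with the paper's proof.
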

\begin{proof}[Proof of Lemma \ref{lem:I1_I2_large_nonres}]
    Proof by contradiction, assume that 
    \begin{align}\label{eq:assume_P_small}
        \max_{x_1\in I_0\cup I_y}|P_{[x_1,x_1+h-1]}(\theta)|<e^{\frac{h}{2}({L_+}-2\varepsilon_0)}.
    \end{align}    
Denote $\theta_{\ell} := \theta+\frac{\ell-1}{2}\omega+\frac{h/2-1}{2}\omega-\frac{1}{4}$, we have $\sin(2\pi (\theta+\frac{\ell-1}{2}\omega+\frac{h/2-1}{2}\omega))=\cos(2\pi\theta_{\ell})$.
Note that by Lemma \ref{lem:main} and the Lagrange interpolation, we have 
\begin{align}\label{eq:gh/2_Lag_1}
%    &P_{[x_1,x_1+h-1]}(\tilde{\theta})\\
%    =&P_{[1,h]}(\tilde{\theta}+\frac{x_1-1}{2}\omega)\\
%    =&g_{h/2}(\sin(2\pi (\tilde{\theta}+\frac{x_1-1}{2}\omega+\frac{h/2-1}{2}\omega)))\\
g_{h/2}(\xi)
=&\sum_{x_1\in I_0\cup I_y} g_{h/2}(\cos(2\pi \theta_{x_1})) \cdot \prod_{\substack{\ell\in I_0\cup I_y\\ \ell\neq x_1}} \frac{\xi-\cos(2\pi\theta_{\ell})}{\cos(2\pi\theta_{x_1})-\cos(2\pi\theta_{\ell})}\\
=&\sum_{x_1\in I_0\cup I_y} P_{[x_1,x_1+h-1]}(\theta)
 \cdot  \prod_{\substack{\ell\in I_0\cup I_y\\ \ell\neq x_1}} \frac{\xi-\cos(2\pi \theta_{\ell})}{\cos(2\pi \theta_{x_1})-\cos(2\pi \theta_{\ell})}.
\end{align}
Let $\xi=\cos(2\pi\tilde{\theta})$ and denote
\begin{align}
 U(\xi,x_1,h, I_0\cup I_y):=\prod_{\substack{\ell\in I_0\cup I_y\\ \ell\neq x_1}} \frac{\cos(2\pi \tilde{\theta})-\cos(2\pi \theta_{\ell})}{\cos(2\pi \theta_{x_1})-\cos(2\pi \theta_{\ell})}.
\end{align}
Hence by \eqref{eq:assume_P_small} and \eqref{eq:gh/2_Lag_1},
\begin{align}\label{eq:P<U_non_res}
\sup_{\theta\in \T} |P_{[1,h]}(\tilde{\theta})|
=&\sup_{\xi\in [-1,1]}|g_{h/2}(\xi)|\notag\\
\leq &(h/2+1)\max_{x_1\in I_0\cup I_y}|P_{[x_1,x_1+h-1]}(\theta)|\cdot \sup_{\xi\in [-1,1]}\max_{x_1\in I_0\cup I_y} |U(\xi,x_1,h,I_0\cup I_y)|\notag\\
\leq &(h/2+1) e^{\frac{h}{2}(L_+-2\varepsilon_0)}\cdot \sup_{\xi\in [-1,1]}\max_{x_1\in I_0\cup I_y} |U(\xi,x_1,h,I_0\cup I_y)|.
\end{align}
Combine this with Lemma \ref{lem:non_res_uni} below, we get 
\begin{align}
    \sup_{\theta\in \T}|P_{[1,h]}(\tilde{\theta})|\leq e^{\frac{h}{2}(L_+-\varepsilon_0)},
\end{align}
which is a contradiction to Lemma \ref{lem:ave_low}.
\end{proof}
Hence in order to prove Lemma \ref{lem:I1_I2_large_nonres}, it suffices to prove the following lemma.
\begin{lemma}\label{lem:non_res_uni}
We have
\begin{align}
        \sup_{\tilde{\theta}\in \T}\max_{x_1\in I_0\cup I_y} |U(\cos(2\pi\tilde{\theta}),x_1,h,I_0\cup I_y)|\leq e^{\frac{\varepsilon_0 h}{2}}.
\end{align}
\end{lemma}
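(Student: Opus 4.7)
The plan is to apply the product-to-sum identity $\cos A - \cos B = -2\sin\tfrac{A+B}{2}\sin\tfrac{A-B}{2}$ to every factor of $U$, reducing the statement to an upper bound on products of $|\sin\pi(\cdot)|$ in the numerator and matching lower bounds in the denominator. Since the odd integers in $I_0 \cup I_y$ make $\{(\ell-1)/2 : \ell \in I_0\cup I_y\}$ a union of two contiguous integer intervals of total length $4sq_m$, Corollary~\ref{cor:1cocycle} applied to each interval (using $\int_\T\ln|\cos 2\pi\tilde\theta - \cos 2\pi\varphi|\,d\varphi = -\ln 2$) yields the uniform upper bound
$$\prod_{\ell \in I_0 \cup I_y,\,\ell\neq x_1} |\cos(2\pi\tilde\theta) - \cos(2\pi\theta_\ell)| \leq e^{(4sq_m - 1)(-\ln 2 + O(\varepsilon))}.$$

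For the denominator, write each factor as $2|\sin\pi(\theta_{x_1}+\theta_\ell)|\cdot|\sin\pi(\theta_{x_1}-\theta_\ell)|$, noting that $\theta_{x_1} - \theta_\ell = k_\ell\omega$ and $\theta_{x_1} + \theta_\ell = 2\theta - \tfrac12 + m_\ell\omega$ for integers $k_\ell, m_\ell$ of size $O(h)$. Since $sq_m < q_{m+1}$ by \eqref{eq:s_qm}, one can tile the $k_\ell$-range and the $m_\ell$-range by blocks of length $q_m$, and apply the sine version of Lemma~\ref{lem:nonmin} (obtained from Lemma~\ref{lem:nonmin} via the substitution $\theta \mapsto \theta - \tfrac12$, since $\sin\pi x = \cos\pi(\tfrac12 - x)$) block by block, lower-bounding each block contribution by $2^{-(q_m-1)} q_m^{-C} \cdot (\text{block minimum})$. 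Block minima for the $k$-type factors are then controlled by $\|q_m\omega\|\geq 1/(2q_{m+1})$ via \eqref{appro1}--\eqref{appro2}; those for the $m$-type factors are controlled by the non-resonance hypothesis $\tilde\gamma(\omega,\theta) = 0$, which gives $|\sin\pi(2\theta-\tfrac12+m^*\omega)|\geq e^{-\varepsilon|m^*|}$ for $|m^*|$ large.

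The main obstacle is aggregating the block-minimum contributions without incurring a loss of order $s\ln q_{m+1}$, which would vastly exceed the budget $\varepsilon_0 h$ at a strong Liouville scale where $s$ can approach $q_{m+1}/q_m$. The saving is a separation-of-returns argument: inside any window of total length $\leq q_{m+1}$, returns to the $\|q_m\omega\|$-scale for the $k$-product (respectively to the $e^{-\varepsilon h}$-scale for the $m$-product under $\tilde\gamma=0$) can occur in at most $O(1)$ blocks, while on all remaining blocks the minimum is bounded below by something of order $q_m^{-1}$ (respectively a fixed polynomial in $h$). Consequently the total subexponential loss is $O(\ln q_{m+1}) + O(s\ln q_m)$, which together with $\ln q_{m+1}\leq(\beta(\omega)+\varepsilon)q_m$, $q_m\leq h/(8s)$, and the arithmetic of \eqref{eq:choose_eps} (so that $(4\beta(\omega)+250)\varepsilon<\varepsilon_0$ absorbs the $\beta(\omega)$-dependence) is $O(\varepsilon h)$.

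Combining the numerator upper bound with the denominator lower bound, the $\ln 2$ terms cancel exactly, leaving $|U|\leq e^{O(\varepsilon h)}\leq e^{\varepsilon_0 h/2}$, which is the desired uniform estimate.
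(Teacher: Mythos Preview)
Your overall architecture matches the paper's: split $\ln|U|$ into numerator and denominator, reduce each via the product-to-sum identity to products of $|\sin\pi(\cdot)|$, tile by $q_m$-blocks, and invoke Lemma~\ref{lem:nonmin} blockwise. The gap is in the aggregation of the denominator block minima. Your separation-of-returns claim---that within a window of length $\leq q_{m+1}$ only $O(1)$ block minima sit at scale $\|q_m\omega\|$ while all others are $\geq c/q_m$---is false. The $s$ block minima inside each group $T^{(w)}$ (total length $<2q_{m+1}$) are pairwise separated by $\geq\|q_m\omega\|$ and hence range over an arithmetic-type progression with step $\sim 1/q_{m+1}$ starting from the global minimum; essentially all of them lie well below $1/q_m$. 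The correct lower bound is therefore of the form $2(\text{global min})+2\sum_{j=1}^{s}\ln(j\|q_m\omega\|)\approx 2(\text{global min})+2s\ln(s/q_{m+1})$, and it is the second term that has to be controlled. The paper does this via the non-resonant geometry: $16sq_m\geq q_n^{1-\varepsilon}$ together with $q_{m+1}\leq q_n$ give $s/q_{m+1}\geq 1/(16q_m q_{m+1}^{\varepsilon})$, hence a loss $O(\varepsilon s\ln q_{m+1})=O(\beta\varepsilon sq_m)$---note the crucial factor $\varepsilon$, which is precisely what $(4\beta+250)\varepsilon<\varepsilon_0$ is designed to absorb. Your stated loss $O(\ln q_{m+1})+O(s\ln q_m)$ would, for $s=O(1)$ (which occurs, e.g., when $m=n-1$ and $q_n\leq Cq_{n-1}$), be of order $\beta q_m\sim\beta h$, overrunning the budget $\varepsilon_0 h/2$ whenever $\beta>2\varepsilon_0$.

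There is a second gap in the global-minimum estimate. Your control ``by $\|q_m\omega\|\geq 1/(2q_{m+1})$'' presumes $|k_\ell|<q_{m+1}$, but with $x_1\in I_y$ and $\ell\in I_0$ one has $|k_\ell|=|x_1-\ell|/2$ of order $y$, which can far exceed $q_{m+1}$. The paper (Lemma~\ref{lem:minmin_21}, Case~2) repairs this by shifting $\ell\mapsto\ell+2tq_n$ so that the resulting index lies in $(-q_n,q_n)$, giving a lower bound $1/(2q_n)$, and then uses the strong Liouville hypothesis $q_{n+1}\geq e^{\varepsilon q_n}$ together with $t\leq 10q_{n+1}^{1-\varepsilon}/q_n$ to show $\|tq_n\omega\|\ll 1/q_n$. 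The same $2tq_n$-shift is needed for the $m$-type factors before invoking $\tilde\gamma(\omega,\theta)=0$, since $m_\ell=(x_1+\ell)/2+4sq_m-3$ is again of order $y$, not $O(h)$; see Lemma~\ref{lem:minmin22}.
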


\begin{proof}[Proof of Lemma \ref{lem:non_res_uni}]
Without loss of generality we assume $y$ is even and $7sq_m$ is even. The other cases are small variants of this case. 
We have
\begin{align}\label{eq:U_sum1sum2}
    &\ln |U(\cos(2\pi\tilde{\theta}),x_1, h,I_0\cup I_y)|\notag\\
    =&\sum_{\substack{\ell\in I_0\cup I_y\\ \ell\neq x_1}}\ln |\cos(2\pi\tilde{\theta})-\cos (2\pi\theta_{\ell})|
-\sum_{\substack{\ell\in I_0\cup I_y\\ \ell\neq x_1}} \ln |(\cos (2\pi\theta_{x_1})-\cos (2\pi\theta_{\ell})|,\notag\\
=&:\sum_1-\sum_2,
\end{align}

By Lemma \ref{lem:nonmin}, 
\begin{align}\label{eq:sum1_nonres}
\sum_1=&\sum_{\substack{\ell\in I_0\cup I_y\\ \ell\neq x_1}} \ln |2\sin(\pi(\tilde{\theta}-\theta_{\ell})
\sin(\pi(\tilde{\theta}+\theta_{\ell}))|\notag\\
= &(4sq_m-1)\ln 2+\sum_{\substack{\ell\in I_0\cup I_y\\ \ell\neq x_1}}\ln |\sin(\pi(\tilde{\theta}-\theta_{\ell}))|+\sum_{\substack{\ell\in I_0\cup I_y\\ \ell\neq x_1}} \ln |\sin(\pi(\tilde{\theta}+\theta_{\ell}))| \notag\\
\leq &C s \ln q_m-4sq_m \ln2 \notag\\
\leq &4sq_m(-\ln 2+\varepsilon),
\end{align}
where in the last line we used 
$$\frac{\ln q_m}{q_m}<\frac{\varepsilon}{4C},$$
which holds for large enough $n$. Note that when $n$ is large enough, $m$ is also large, since $q_{m+1}>\mathrm{dist}(y,2q_n\Z)/8>q_n^{1-\varepsilon}/8$ by \eqref{eq:s_qm}.
We shall use this fact multiple times later without explicitly mentioning it.

Combining \eqref{eq:sum1_nonres} with Lemma \ref{lem:sum2_nonres} below, we will arrive at
\begin{align}
\sum_1-\sum_2 &\leq 4sq_m(-\ln2+\varepsilon)-4sq_m\ln 2-\sum_{2,1}-\sum_{2,2}\\
&\leq -8sq_m\ln 2+4\varepsilon sq_m-(-8sq_m\ln 2-(16\beta+35)\varepsilon sq_m)\\
&\leq (16\beta+40)\varepsilon sq_m<\frac{\varepsilon_0 h}{2}.
\end{align}
This proves Lemma \ref{lem:non_res_uni}.
\end{proof}

So next, we will prove the lower bound of $\sum_2$.

\subsection{Estimates of $\sum_2$}\label{sec:sum2_nonres}

\begin{lemma}\label{lem:sum2_nonres}
We have
    \begin{align}
        \sum_2\geq -8sq_m\ln 2-(16\beta+35)\varepsilon sq_m.
    \end{align}
\end{lemma}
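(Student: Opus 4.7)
The plan is to apply the product-to-sum identity $|\cos(2\pi\alpha)-\cos(2\pi\beta)| = 2|\sin(\pi(\alpha-\beta))||\sin(\pi(\alpha+\beta))|$ to each term of $\sum_2$, obtaining the decomposition
\[\sum_2 = (4sq_m-1)\ln 2 + \sum_{2,1} + \sum_{2,2},\]
with $\sum_{2,1} := \sum_{\ell\in I_0\cup I_y,\, \ell\neq x_1}\ln|\sin(\pi(\theta_{x_1}-\theta_\ell))|$ and $\sum_{2,2} := \sum_{\ell\in I_0\cup I_y,\, \ell\neq x_1}\ln|\sin(\pi(\theta_{x_1}+\theta_\ell))|$, and then bounding $\sum_{2,1}$ and $\sum_{2,2}$ separately from below.

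For $\sum_{2,1}$, I would use $\theta_{x_1}-\theta_\ell = \tfrac{x_1-\ell}{2}\omega$ to rewrite this as $\sum_k \ln|\sin(\pi k\omega)|$ where $k=(x_1-\ell)/2$ runs through $4sq_m-1$ distinct integers organized into two consecutive-integer blocks, each of length less than $q_{m+1}$. I would then apply a Liouville-scale estimate obtained by iterating Lemma~\ref{lem:nonmin} at scale $q_m$: each $q_m$-sub-block contributes $-(q_m-1)\ln 2 + O(\ln q_m)$ up to a minimum-term correction controlled by the pointwise bound $\|k\omega\|\geq 1/(2q_{m+1})$ for $|k|<q_{m+1}$. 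Combined with $\ln q_{m+1}\leq (\beta(\omega)+\varepsilon)q_m$ valid for $m$ large, this yields $\sum_{2,1}\geq -(4sq_m-1)\ln 2 - C_1(\beta+\varepsilon)sq_m$ with an explicit constant $C_1$.

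For $\sum_{2,2}$, I would write $\theta_{x_1}+\theta_\ell = 2\theta - \tfrac12 + K_\ell\omega$ with $K_\ell = \tfrac{x_1+\ell+8sq_m-6}{2}\in\Z$, and note that as $\ell$ varies by $2$ across $I_0\cup I_y$, the integers $K_\ell$ fill two consecutive-integer clusters of total length $4sq_m-1$. The same block-by-block estimate applies to $\sum_K \ln|\sin(\pi(\phi+K\omega))|$ with $\phi = 2\theta - \tfrac12$, giving a main term and at most one minimum-term correction per cluster. Each correction $\ln|\sin(\pi(\phi+K_0\omega))|$ is bounded below by $-\varepsilon|K_0|+\ln 2$ using the non-resonance assumption $\tilde\gamma(\omega,\theta)=0$ (which gives $|\sin(\pi(\phi+K\omega))|\geq 2e^{-\varepsilon|K|}$ for $|K|$ sufficiently large). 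After careful accounting of the $|K_0|$'s using the explicit positions of $I_0$ and $I_y$, one should obtain $\sum_{2,2}\geq -(4sq_m-1)\ln 2 - C_2(\beta+\varepsilon)sq_m$; combining with the $\sum_{2,1}$ bound and tracking constants yields $\sum_2 \geq -8sq_m\ln 2 - (16\beta+35)\varepsilon sq_m$.

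The hard part is controlling the minimum-term correction in $\sum_{2,2}$ for the cluster associated with $I_y$: for $x_1\in I_0$ the shift $K_\ell$ is of order $y/2$ on this cluster, and for $x_1\in I_y$ even of order $y$, while $y$ can be as large as $q_{n+1}^{1-\varepsilon}$, whereas the target error budget is only $O(\beta\varepsilon sq_m)$ with $sq_m\lesssim q_n$; in the strong Liouville regime $q_{n+1}\gg q_n$ these scales can be vastly different. The naive non-resonance bound $\varepsilon|K_0|$ alone appears insufficient, and handling this step cleanly requires exploiting the joint strength of $\tilde\gamma(\omega,\theta)=0$ and $\beta(\omega)<L$ together with the consecutive-integer (arithmetic-progression) structure of $\{K_\ell\}$, so that the very small $\sin$-values within the cluster do not spoil the global bound.
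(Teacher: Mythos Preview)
Your decomposition $\sum_2 = (4sq_m-1)\ln 2 + \sum_{2,1} + \sum_{2,2}$ and the block-by-block use of Lemma~\ref{lem:nonmin} match the paper's approach exactly. However, the difficulty you flag in your last paragraph is real, you do not resolve it, and its resolution is the core of the argument.

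The missing idea is a \emph{shift by $2tq_n$}. In the non-resonant regime $y\in\mathcal{N}_{2(t-1),2t}$, so $y$ is within $q_n$ of $2tq_n$ for some $t$ with $tq_n<10q_{n+1}^{1-\varepsilon}$. The strong Liouville condition $q_{n+1}>e^{\varepsilon q_n}$ then gives
\[
\|2tq_n\omega\|\leq \frac{2t}{q_{n+1}}\leq 20\,q_{n+1}^{-\varepsilon}\leq 20\,e^{-\varepsilon^2 q_n},
\]
which is negligible compared with any small divisor at scale $q_n$. One therefore replaces $K_\ell$ by $K_\ell-2tq_n$ (or $K_\ell-tq_n$ for cross-cluster terms), which lands in $[-20sq_m,-sq_m]$, and only then applies the non-resonance hypothesis $\tilde\gamma(\omega,\theta)=0$ in the form $\|2\theta-\tfrac12+K\omega\|\geq e^{-\varepsilon^3|K|}$. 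This yields a loss of order $\varepsilon^3 sq_m$, which fits the budget. Without the shift, your direct bound $\varepsilon|K_0|$ can be as large as $\varepsilon q_{n+1}^{1-\varepsilon}$, which destroys the estimate, and the arithmetic-progression structure of $\{K_\ell\}$ alone cannot repair this.

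The same gap is already present in your treatment of $\sum_{2,1}$, though you do not flag it. When $x_1\in I_y$ and $\ell\in I_0$ (or vice versa), the integer $k=(x_1-\ell)/2$ is of order $y/2$, not bounded by $q_{m+1}$, so your pointwise bound ``$\|k\omega\|\geq 1/(2q_{m+1})$ for $|k|<q_{m+1}$'' does not apply to the cross-cluster minimum. The paper again uses the shift: setting $\tilde\ell=\ell+2tq_n$ brings $|x_1-\tilde\ell|/2$ into $(0,q_n)$, where $\|q_{n-1}\omega\|\geq 1/(2q_n)$ applies, and the shift error $\|tq_n\omega\|\leq 10e^{-\varepsilon^2 q_n}\ll 1/(2q_n)$ is harmless.
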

Proof of Lemma \ref{lem:sum2_nonres}: Clearly,
\begin{align}\label{eq:sum2_nonres}
\sum_2
=&(4sq_m-1)\ln 2+\sum_{\substack{\ell\in I_0\cup I_y\\ \ell\neq x_1}}\ln|\sin(\pi\frac{x_1-\ell}{2}\omega)|\notag\\
&\qquad +
\sum_{\substack{\ell\in I_0\cup I_y\\ \ell\neq x_1}}\ln|\sin(\pi(2\theta-\frac{1}{2}+(\frac{x_1+\ell}{2}+4sq_m-3)\omega))| \notag\\
=:&(4sq_m-1)\ln 2+\sum_{2,1}+\sum_{2,2}.
\end{align}
We further break $I_0, I_y$ into $I_y=:\cup_{j=1}^{3s} T_j$ and $I_0=:\cup_{j=3s+1}^{4s} T_j$,
where
\begin{align}\label{def:Tj}
T_j=\begin{cases}
[y-7sq_m+2(j-1)q_m, y-7sq_m+2j q_m-1]\cap (2\Z+1), \text{ for } 1\leq j\leq 3s\\
[-7sq_m+2(j-3s-1)q_m, -7sq_m+2(j-3s)q_m-1]\cap (2\Z+1), \text{ for } 3s+1\leq j\leq 4s.
\end{cases}
\end{align}
For $w=1,...,4$, let
\begin{align}\label{def:T^w}
    T^{(w)}:=\cup_{j=(w-1)s}^{ws-1}T_j.
\end{align}

Clearly, the lower bound of $\sum_2$ follows from those for $\sum_{2,1}$ and $\sum_{2,2}$, which we address in Sections \ref{sec:sum21_nonres} and \ref{sec:sum22_nonres} respectively below.

%We first estimate $\sum_{2,1}$.
\subsubsection{Estimates of $\sum_{2,1}$}\label{sec:sum21_nonres}
\begin{lemma}\label{lem:sum21}
We have
    \begin{align}
        \sum_{2,1}\geq -4sq_m\ln 2-(8\beta+17)\varepsilon sq_m.
    \end{align}
\end{lemma}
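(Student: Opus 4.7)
The plan is to rewrite $\sum_{2,1}$ as a log-sine sum over integer translates of $\omega$, decompose it along the block partition $\{T_j\}$, apply a sine analogue of Lemma~\ref{lem:nonmin} block-by-block, and finally aggregate the block minima using the arithmetic of $\omega$. First, since $x_1$ and $\ell$ are both odd, $k_\ell:=(x_1-\ell)/2\in\Z$, so
\begin{equation*}
\sum_{2,1}=\sum_{\ell\in(I_0\cup I_y)\setminus\{x_1\}}\ln|\sin(\pi k_\ell\omega)|.
\end{equation*}
As $\ell$ runs over the $q_m$ odd integers of a block $T_j$, the corresponding $k_\ell$ form a set $K_j\subset\Z$ of $q_m$ consecutive integers, and the $4s$ sets $K_j$ are pairwise disjoint. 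By \eqref{eq:s_qm}, $|k_\ell|\lesssim sq_m<q_{m+1}$, so \eqref{appro1} at level $m+1$ yields $\|k_\ell\omega\|\geq\|q_m\omega\|\geq 1/(2q_{m+1})$ for every nonzero $k_\ell$.

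Next, I would apply a sine version of Lemma~\ref{lem:nonmin} (obtained from the cosine form by the substitution $\theta\mapsto\theta-1/4$) on each block $T_j$: isolating the unique minimizer $k_j^\ast$ of $|\sin(\pi k\omega)|$ over $K_j\setminus\{0\}$, the non-minimal sum over $K_j\setminus\{0,k_j^\ast\}$ is within $C\ln q_m$ of $-(|K_j|-1)\ln 2$. Summing these non-minimal contributions over all $4s$ blocks produces $-4sq_m\ln 2+O(s\ln q_m)\geq -4sq_m\ln 2-4s\varepsilon q_m$, using $\ln q_m\leq\varepsilon q_m$ for large $m$. This already accounts for the leading $-4sq_m\ln 2$ term in the lemma, with a controlled first piece of the error. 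What remains is the aggregate
\begin{equation*}
\sum_{j=1}^{4s}\ln|\sin(\pi k_j^\ast\omega)|.
\end{equation*}

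The hard part will be this aggregate bound: the pointwise estimate $\ln|\sin(\pi k_j^\ast\omega)|\geq-\ln q_{m+1}$ would lose $4s\ln q_{m+1}\leq 4s\beta q_m$, which is too large to fit into the target $(8\beta+17)\varepsilon sq_m$. The saving must come from the arithmetic structure of $\omega$: the minimizers $k_j^\ast$ fall into essentially distinct ``resonance classes'' of the form $k_j^\ast\equiv a_j q_m\pmod{q_{m+1}}$ with distinct $a_j$, and the identity $\|aq_m\omega\|=|a|\,\|q_m\omega\|$ (valid for $|a|\leq a_{m+1}/2$) makes the product of block minima behave like a factorial $\prod_j|a_j|$. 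By Stirling, together with the maximality of $s$ in \eqref{eq:s_qm} (which ties $sq_m$ to $\mathrm{dist}(y,2\Z q_n)$ and hence to the effective range of the $|a_j|$'s), this factorial saving converts $4s\ln q_{m+1}$ into $4s\ln(q_{m+1}/s)+O(s)$. Applying $\ln q_{m+1}\leq(\beta+\varepsilon)q_m$ (the defining property of $\beta(\omega)$ together with the choice of $\varepsilon$ in Section~\ref{sec:outline}) then yields the claimed sharp bound $\sum_{2,1}\geq -4sq_m\ln 2-(8\beta+17)\varepsilon sq_m$. The precise $(8\beta+17)$ constant encodes the balance between this factorial cancellation and the residual $\beta$-dependent contribution of the minimum terms, and a careless block-by-block bound falls short of it by a factor of order $1/\varepsilon$.
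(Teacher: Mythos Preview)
Your plan tracks the paper's structure (block partition $T_j$, Lemma~\ref{lem:nonmin} block by block, then a factorial-type control of the block minima), but two steps are genuinely broken.

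First, the claim $|k_\ell|\lesssim sq_m<q_{m+1}$ is false when $x_1$ and $\ell$ lie in different intervals $I_y$ and $I_0$. In that case $|x_1-\ell|/2$ is of order $y/2$, and $y$ can be as large as $2tq_n$ with $t$ up to $\sim q_{n+1}^{1-\varepsilon}/q_n$, far exceeding $q_{m+1}$ (recall $q_{m+1}\le q_n$ here). Hence your uniform lower bound $\|k_\ell\omega\|\ge 1/(2q_{m+1})$ is unjustified. The paper handles this by shifting: replacing $\hat\ell\in I_0$ with $\tilde\ell=\hat\ell+2tq_n$, one has $|(x_1-\tilde\ell)/2|<q_n$ and thus $\|(x_1-\tilde\ell)\omega/2\|\ge 1/(2q_n)$, while the strong Liouville hypothesis $q_{n+1}>e^{\varepsilon q_n}$ ensures $\|tq_n\omega\|\le 10q_{n+1}^{-\varepsilon}\ll 1/q_n$, so the shift error is negligible. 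You never invoke this mechanism.

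Second, and more seriously, your aggregate bound fails by a factor $1/\varepsilon$. After the factorial saving you arrive at $4s\ln(q_{m+1}/s)$ and then apply $\ln q_{m+1}\le(\beta+\varepsilon)q_m$. But this yields at best $4s(\beta+\varepsilon)q_m-4s\ln s\sim 4\beta sq_m$, not $(8\beta+16)\varepsilon sq_m$; the $\beta$ has lost its $\varepsilon$ prefactor and the bound is too weak by exactly the margin you warned against. What the paper actually uses is that $q_{m+1}\le q_n$ (since $8q_m\le\mathrm{dist}(y,2\Z q_n)\le q_n$) together with $16sq_m\ge q_n^{1-\varepsilon}$, giving
\[
\frac{s}{q_{m+1}}\ \ge\ \frac{q_n^{1-\varepsilon}}{16q_m q_{m+1}}\ \ge\ \frac{1}{16q_m q_{m+1}^{\varepsilon}},
\]
so that $\ln(q_{m+1}/s)\le \varepsilon\ln q_{m+1}+\ln(16q_m)\le (\beta+2)\varepsilon q_m$. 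It is this extra arithmetic input, not just the maximality of $s$ or the Stirling cancellation, that produces the crucial $\varepsilon$ in front of $\beta$.
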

\begin{proof}[Proof of Lemma \ref{lem:sum21}]
Without loss of generality, we assume $x_1\in T_{j_0}$ for some $j_0\leq 3s$. Note this implies $x_1\in I_y$. 
By Lemma \ref{lem:nonmin},
\begin{align}\label{eq:sum21_nonres}
\sum_{2,1}&=\sum_{\substack{\ell\in I_0\cup I_y\\ \ell\neq x_1}}\ln|\sin(\pi\frac{x_1-\ell}{2}\omega)|\notag\\
&\geq -4sq_m\ln 2-Cs\ln q_m+\sum_{\substack{j=1\\ j\neq j_0}}^{4s} \min_{\ell\in T_j}\ln\|\frac{x_1-\ell}{2}\omega\|,
\end{align}
above we used, $|\sin\pi\theta|\geq 2\|\theta\|$. The estimate of $\sum_{2,1}$ is reduced to the following estimate.
\begin{lemma}\label{lem:allmin_21}
    \begin{align}
        \sum_{\substack{j=1\\ j\neq j_0}}^{4s}\, \min_{\ell\in T_j} \ln \|\frac{x_1-\ell}{2}\omega\|\geq -(8\beta+16)\varepsilon sq_m.
    \end{align}
\end{lemma}
Clearly, combining Lemma \ref{lem:allmin_21} with \eqref{eq:sum21_nonres} yields the claimed result of Lemma \ref{lem:sum21}. 
\end{proof}

Next, we prove Lemma \ref{lem:allmin_21}.
\begin{proof}[Proof of Lemma \ref{lem:allmin_21}]
In order to estimate the sum of the $(4s-1)$ minimums $\sum_{\substack{j=1\\ j\neq j_0}}^{4s}$, we first control the minimum of the $(4s-1)$ minimums in Lemma \ref{lem:minmin_21} below.
Let 
 \[\min_{\substack{r=1\\ r\neq j_0}}^{4s} \min_{\ell\in T_r}\ln\|\frac{x_1-\ell}{2}\omega\| =: \ln\|\frac{x_1-\hat{\ell}}{2}\omega\|.\]
\begin{lemma}\label{lem:minmin_21}
The minimum of the $(4s-1)$ minimums satisfies the following lower bound:
    \begin{align}
        \ln \|\frac{x_1-\hat{\ell}}{2}\omega\|\geq -\varepsilon sq_m.
    \end{align}
\end{lemma}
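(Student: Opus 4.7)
The plan is to bound the integer $k:=(x_1-\hat\ell)/2$ via the continued-fraction estimates \eqref{appro1}--\eqref{appro2}, splitting into two cases according to whether $\hat\ell\in I_y\setminus T_{j_0}$ or $\hat\ell\in I_0$.

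\emph{Case A: $\hat\ell\in I_y\setminus T_{j_0}$.} Since $x_1$ and $\hat\ell$ both lie in the interval $I_y$ of length $6sq_m$, one has $|k|\le 3sq_m$. From \eqref{eq:s_qm}, $sq_m\le d/8\le q_n/8$ where $d:=\mathrm{dist}(y,2q_n\Z)$, so $|k|<3q_n/8<q_n$. Then \eqref{appro1}--\eqref{appro2} give
\[
\Bigl\|\tfrac{x_1-\hat\ell}{2}\omega\Bigr\|\ \ge\ \|q_{n-1}\omega\|\ \ge\ \tfrac{1}{2q_n}.
\]
Since $\varepsilon sq_m\ge\varepsilon q_n^{1-\varepsilon}/16\gg\ln q_n$ for $n$ large, this already yields the required lower bound.

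\emph{Case B: $\hat\ell\in I_0$.} The difficulty is that $I_0$ and $I_y$ are separated by a distance of order $y$, which can be as large as $10q_{n+1}^{1-\varepsilon}$, so a direct size bound on $|k|$ is useless. The idea is to exploit the non-resonance of $y$ at scale $n$: write $y=2Nq_n+\eta d$ with $\eta\in\{\pm 1\}$ and $|N|\le 6q_{n+1}^{1-\varepsilon}/q_n$, and set $k'':=k-Nq_n\in\Z$. A short computation from $\hat\ell\in[-7sq_m,-5sq_m+1]$ and $x_1\in[y-7sq_m+2,y-sq_m+1]$ gives $2k''=(x_1-y)-\hat\ell+\eta d$, whence
\[
|k''|\ \le\ \tfrac{d}{2}+3sq_m+1\ \le\ \tfrac{q_n}{2}+\tfrac{3q_n}{8}+1\ <\ q_n,
\]
using $d\le q_n$ and $sq_m\le q_n/8$. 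Hence $\|k''\omega\|\ge 1/(2q_n)$ by \eqref{appro1}--\eqref{appro2}, and the triangle inequality together with $\|q_n\omega\|\le 1/q_{n+1}$ gives
\[
\|k\omega\|\ \ge\ \|k''\omega\|-|N|\,\|q_n\omega\|\ \ge\ \tfrac{1}{2q_n}-\tfrac{6}{q_nq_{n+1}^{\varepsilon}}\ \ge\ \tfrac{1}{4q_n}
\]
for $n$ large. Taking logarithms and applying the same polynomial-vs-log comparison as in Case A yields $\ln\|k\omega\|\ge-\ln(4q_n)\ge-\varepsilon sq_m$.

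The main obstacle is the mod-$q_n$ reduction in Case B. Although $|k|$ may be as large as $\sim q_{n+1}^{1-\varepsilon}$, the reduced integer $k''=k-Nq_n$ is forced into $(-q_n,q_n)$ by combining $d\le q_n$ (which is the non-resonance of $y$ at scale $n$, i.e.\ $y\in\mathcal N_{2(t-1),2t}$) with $sq_m\le q_n/8$ (from the definition of $s$ in \eqref{eq:s_qm}). Absorbing the large multiple of $q_n$ hidden in $y$ is the essential use of the non-resonance hypothesis on $y$ at scale $n$; once $|k''|<q_n$ is secured, the rest is a direct Dirichlet-type estimate.
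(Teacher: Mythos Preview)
Your argument follows essentially the same route as the paper's: in Case~B you reduce $k=(x_1-\hat\ell)/2$ modulo $q_n$ to land in $(-q_n,q_n)$, invoke the continued-fraction gap \eqref{appro1}--\eqref{appro2}, and use the strong Liouville hypothesis $q_{n+1}>e^{\varepsilon q_n}$ to absorb the error $|N|\,\|q_n\omega\|$. The paper does exactly this, shifting by the fixed amount $2tq_n$ (setting $\tilde\ell:=\hat\ell+2tq_n$ and observing that both $x_1$ and $\tilde\ell$ lie in $(2(t-1)q_n,2tq_n)$), whereas you shift by the nearest multiple $2Nq_n$; the two reductions are equivalent.

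One small omission: to apply \eqref{appro1} you must know $k''\neq 0$, and your size bound $|k''|\le d/2+3sq_m+1$ alone does not give this. It follows from the lower bound $d\ge 8sq_m$ in \eqref{eq:s_qm}: since $(x_1-y)-\hat\ell\in[-2sq_m+1,\,6sq_m+1]$, adding $\eta d$ with $|d|\ge 8sq_m$ forces $|2k''|\ge d-6sq_m-1\ge 2sq_m-1>0$. The paper handles the same point by checking directly that $x_1<\tilde\ell$.
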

\begin{proof}[Proof of Lemma \ref{lem:minmin_21}]
We discuss two cases depending on the location of $\hat{\ell}$.

\underline{Case 1. $\hat{\ell} \in  I_y$}. 
In this case, by \eqref{eq:s_qm}, we have 
\begin{align}
0<\frac{|x_1-\hat{\ell}|}{2}\leq 3sq_m<\frac{3}{8}\mathrm{dist}(y,2\Z q_n)\leq \frac{3}{8}q_n.
\end{align}
Hence
\begin{align}\label{eq:minmin_21}
    \ln \|\frac{x_1-\hat{\ell}}{2}\omega\|\geq \ln \|q_{n-1}\omega\| \geq -\ln (2q_n).
\end{align}
Again, by \eqref{eq:s_qm}, we have
\begin{align}\label{eq:epsqn<16sqm}
    q_n^{1-\varepsilon}\leq 16sq_m.
\end{align}
Combining this into \eqref{eq:minmin_21}, we have
\begin{align}\label{eq:minmin_21_final}
    \ln\|\frac{x_1-\hat{\ell}}{2}\omega\|\geq -\varepsilon sq_m.
\end{align}

\underline{Case 2. $\hat{\ell} \in I_0$}.

Let $\hat{\ell}+2tq_n=:\tilde{\ell}$.
Note $\tilde{\ell}\geq 2tq_n-7sq_m$, while $x_1\leq y-sq_m\leq 2tq_m-9sq_m<\tilde{\ell}$. In particular $x_1\neq \tilde{\ell}$.

Since both $x_1$ and $\tilde{\ell}$ lie between $2(t-1)q_n$ and $2tq_n$, we have
    $0<|(x_1-\tilde{\ell})/2|<q_n$.
This implies
\begin{align}
    \|\frac{x_1-\tilde{\ell}}{2}\omega\|\geq \|q_{n-1}\omega\|\geq \frac{1}{2q_n}.
\end{align}
Also note that since $tq_n< 10q_{n+1}^{1-\varepsilon}$ and $q_{n+1}\geq e^{\varepsilon q_n}$, we have
\begin{align}
    \|tq_n\omega\|=t\|q_n\omega\|\leq \frac{t}{q_{n+1}}\leq 10q_{n+1}^{-\varepsilon}\leq 10e^{-\varepsilon^2 q_n}\ll \frac{1}{2q_n}.
\end{align}
Hence
\begin{align}
    \|\frac{x_1-\hat{\ell}}{2}\omega\|\geq \|\frac{x_1-\tilde{\ell}}{2}\omega\|-\|tq_n\omega\|\geq \frac{1}{3q_n}.
\end{align}
Therefore
\begin{align}
   \ln\|\frac{x_1-\hat{\ell}}{2}\omega\|\geq \ln \frac{1}{3q_n}\geq -\varepsilon sq_m,
\end{align}
where we used the same arguments as in Case 1 in the last estimate. This completes the proof of Lemma \ref{lem:minmin_21}.
\end{proof}

Next, we continue the proof of Lemma \ref{lem:allmin_21} as follows.
For each $w=1,...,4$, we define
\begin{align}
    S_w:=\sum_{\substack{j=(w-1)s+1\\ j\neq j_0}}^{ws} \min_{\ell\in T_j} \ln \|\frac{x_1-\ell}{2}\omega\|.
\end{align}
Clearly,
\begin{align}\label{eq:summin=Sw}
    \sum_{\substack{j=1\\ j\neq j_0}}^{4s}\, \min_{\ell\in T_j} \ln \|\frac{x_1-\ell}{2}\omega\|
    =&\sum_{w=1}^4 S_w.
\end{align}
Within each sum $S_w$, the $\ell$'s are chosen from the same $T^{(w)}$, see \eqref{def:T^w}, hence for arbitrary two different $\ell, \ell'\in T^{(w)}$, we have $|\ell-\ell'|\leq 2sq_m<2q_{m+1}$, see \eqref{eq:s_qm}.
Therefore
\begin{align}\label{eq:x1-ell-x1-ell'}
    \|\frac{x_1-\ell}{2}\omega-\frac{x_1-\ell'}{2}\omega\|\geq \|q_m\omega\|.
\end{align}
This way, we can estimate
\begin{align}\label{eq:Sw_0}
    S_w
    \geq &2\min_{\substack{j=1\\ j\neq j_0}}^{4s} \min_{\ell\in T_j} \ln \|\frac{x_1-\ell}{2}\omega\|+2\sum_{j=1}^s \ln (j\|q_m\omega\|)\\
    \geq &-2\varepsilon sq_m+2s\ln \frac{s}{q_{m+1}},
\end{align}
where we used Lemma \ref{lem:minmin_21} for the minimum of minimums.
Now we continue to lower bound $s\ln (s/q_{m+1})$ as follows. Note that $8q_m\leq \mathrm{dist}(y,2\Z q_n)\leq q_n$, hence $q_{m+1}\leq q_n$. Also recall $16sq_m\geq q_n^{1-\varepsilon}$, see \eqref{eq:s_qm}. Therefore
\begin{align}
    s\ln \frac{s}{q_{m+1}}\geq s\ln \frac{q_n^{1-\varepsilon}}{16q_mq_{m+1}}
    \geq &s\ln \frac{1}{16q_mq_{m+1}^{\varepsilon}}\\
    \geq &-s\ln (16q_m)-\varepsilon sq_m\frac{\ln q_{m+1}}{q_m}\\
    \geq &-(\beta+1)\varepsilon sq_m,
\end{align}
where we used that $(\ln q_{m+1})/q_m<\beta+\varepsilon$.
Plugging this into \eqref{eq:Sw_0}, we have
\begin{align}\label{eq:Sw}
S_w\geq -(2\beta+4) \varepsilon sq_m.
\end{align}
This completes the proof of Lemma \ref{lem:allmin_21} due to \eqref{eq:summin=Sw}, and hence the analysis of $\sum_{2,1}$.
\end{proof}

Next, we move on to estimate $\sum_{2,2}$.
\subsubsection{Estimates of $\sum_{2,2}$}\label{sec:sum22_nonres}
\begin{lemma}\label{lem:sum22}
    \begin{align}
        \sum_{2,2}\geq -4sq_m\ln 2-(8\beta+17)\varepsilon sq_m.
    \end{align}
\end{lemma}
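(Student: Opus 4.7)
\textbf{Plan for Lemma \ref{lem:sum22}.} The proof will mirror the four-step architecture of Lemma \ref{lem:sum21}, with the non-resonance hypothesis $\tilde{\gamma}(\omega,\theta)=0$ replacing the three-distance bound in the min-of-min estimate. First, I would introduce
\begin{equation}
\eta_\ell := 2\theta - \tfrac{1}{2} + \Bigl(\tfrac{x_1+\ell}{2} + 4sq_m - 3\Bigr)\omega,\notag
\end{equation}
so that $\sum_{2,2} = \sum_{\ell \in I_0 \cup I_y,\,\ell \neq x_1} \ln|\sin\pi\eta_\ell|$. Applying Lemma \ref{lem:nonmin} within each length-$q_m$ block $T_j$ (after a $\sin$-to-$\cos$ phase shift, and excluding the term at $x_1$) and combining with $|\sin\pi\eta|\geq 2\|\eta\|$ yields
\begin{equation}
\sum_{2,2} \geq -4sq_m\ln 2 - Cs\ln q_m + \sum_{\substack{j=1\\ j\neq j_0'}}^{4s}\min_{\ell\in T_j}\ln\|\eta_\ell\|,\notag
\end{equation}
where $j_0'$ indexes the block containing $x_1$, and $Cs\ln q_m\leq \varepsilon sq_m$ for $n$ large.

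Next, I would split into the four groups $T^{(w)}$ and define $S_w:=\sum_{j\in T^{(w)},\,j\neq j_0'}\min_{\ell\in T_j}\ln\|\eta_\ell\|$, paralleling Lemma \ref{lem:allmin_21}. The key observation is that $\eta_\ell - \eta_{\ell'} = \tfrac{\ell-\ell'}{2}\omega$ and $|\ell-\ell'|/2 < q_{m+1}$ for $\ell,\ell' \in T^{(w)}$ (by \eqref{eq:s_qm}), so $\|\eta_\ell - \eta_{\ell'}\|\geq\|q_m\omega\|$ via \eqref{appro1}. Hence the $s$ minima contributing to $S_w$ are pairwise $\|q_m\omega\|$-separated, and the same sorting argument as in Lemma \ref{lem:allmin_21} gives
\begin{equation}
S_w \geq 2\min_{\ell\in T^{(w)},\,\ell\neq x_1}\ln\|\eta_\ell\| + 2s\ln(s/q_{m+1}).\notag
\end{equation}

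The heart of the argument, and where the non-resonance hypothesis enters essentially, is the min-of-min estimate replacing Lemma \ref{lem:minmin_21}. Let $\hat\ell$ minimize $\|\eta_\ell\|$ over $I_0\cup I_y$, set $k = \tfrac{x_1+\hat\ell}{2}+4sq_m-3$, and write $y = 2tq_n + u$ with $|u|\leq 16sq_m$ by \eqref{eq:s_qm} and $|t|\leq 10q_{n+1}^{1-\varepsilon}/q_n$. A direct computation shows that choosing $t' := 2t$ if $\hat\ell\in I_y$ and $t' := t$ if $\hat\ell\in I_0$ makes $|r|:=|k-t'q_n| = O(sq_m)$ in both cases, while $\|t'q_n\omega\|\leq 20 q_{n+1}^{-\varepsilon}/q_n$ is super-exponentially small thanks to the strong Liouville assumption $q_{n+1}\geq e^{\varepsilon q_n}$. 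Invoking $\tilde{\gamma}(\omega,\theta)=0$ with a small parameter $\delta := \varepsilon^2$, I would get $\|2\theta-\tfrac{1}{2}+r\omega\|\geq e^{-\varepsilon^2|r|}$ for $|r|$ large, and the triangle inequality then gives
\begin{equation}
\ln\|\eta_{\hat\ell}\| \geq -\varepsilon^2|r| - \ln 2 \geq -\varepsilon sq_m.\notag
\end{equation}

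Feeding this bound into the expression for $S_w$ and invoking $s\ln(s/q_{m+1})\geq -(\beta+1)\varepsilon sq_m$ exactly as in Lemma \ref{lem:sum21} produces $S_w\geq -(2\beta+4)\varepsilon sq_m$; summing over $w=1,\dots,4$ and absorbing the $-Cs\ln q_m$ error yields the claimed bound. The main obstacle is the case $\hat\ell\in I_0$: there one must shift by an \emph{odd} multiple $tq_n$ of $q_n$ rather than $2tq_n$, and one must verify with explicit constants both that $|r|=O(sq_m)$ and that $\|tq_n\omega\|$ remains negligible compared to the non-resonance bound $e^{-\varepsilon^2|r|}$—which is precisely where the strong Liouville assumption is used in an essential (not merely notational) way.
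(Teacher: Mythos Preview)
Your overall architecture matches the paper's proof (reduce via Lemma~\ref{lem:nonmin} to a sum of block-minima, bound the min-of-min using the non-resonance hypothesis after shifting by a multiple of $q_n$, then control the remaining minima by the $\|q_m\omega\|$-separation argument as in Lemma~\ref{lem:allmin_21}). Two concrete details need repair before the plan goes through.

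First, the shift $t'$ in the min-of-min step must depend on the location of \emph{both} $x_1$ and $\hat\ell$, not just $\hat\ell$: the correct rule is $t'=2t$ when both lie in $I_y$, $t'=t$ when exactly one does, and $t'=0$ when both lie in $I_0$ (these are precisely the three cases of the paper's Lemma~\ref{lem:minmin22}). As written, your rule gives the wrong shift whenever $x_1\in I_0$; e.g.\ if $x_1\in I_0$ and $\hat\ell\in I_y$ then $k-2tq_n\approx -tq_n$, which is not $O(sq_m)$. (If you intend to carry over a WLOG $x_1\in I_y$ from the $\sum_{2,1}$ analysis, that must be stated and justified.) Second, the choice $\delta=\varepsilon^2$ is too coarse for the triangle-inequality step: since $|r|$ can be as large as $\sim 20sq_m\le 2.5\,q_n$, the non-resonance lower bound $e^{-\varepsilon^2|r|}$ may fall \emph{below} the Liouville error $\|t'q_n\omega\|\lesssim q_{n+1}^{-\varepsilon}\le e^{-\varepsilon^2 q_n}$, and the comparison fails. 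The paper takes $\delta=\varepsilon^3$, so that $e^{-\varepsilon^3|r|}\ge e^{-3\varepsilon^3 q_n}\gg e^{-\varepsilon^2 q_n}$; with this adjustment and the corrected case split, the remainder of your outline yields the bound $-(8\beta+17)\varepsilon sq_m$ exactly as claimed.
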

\begin{proof}[Proof of Lemma \ref{lem:sum22}]
Similar to \eqref{eq:sum21_nonres}, we have
\begin{align}\label{eq:sum22}
\sum_{2,2}=&\sum_{\substack{\ell\in I_0\cup I_y\\ \ell\neq x_1}}\ln|\sin(\pi(2\theta-1/2+(\frac{x_1+\ell}{2}+4sq_m-3)\omega))|\notag\\
\geq &-4sq_m\ln 2-\varepsilon sq_m+\sum_{j=1}^{4s} \min_{\ell\in T_j}\ln\|2\theta-1/2+(\frac{x_1+\ell}{2}+4sq_m-3)\omega\|
\end{align}
We again estimate the minimum of the $4s$ minimums first.
\begin{lemma}\label{lem:minmin22}
    \begin{align}
        \min_{j=1}^{4s}\min_{\ell\in T_j}\ln\|2\theta-1/2+(\frac{x_1+\ell}{2}+4sq_m-3)\omega\|\geq -21\varepsilon^3 sq_m.
    \end{align}
\end{lemma}
\begin{proof}[Proof of Lemma \ref{lem:minmin22}]
We divide into three different cases and we will use $\tilde{\gamma}(\omega, \theta)=0$, so for any sufficient large $n$, 
\begin{align}\label{eq:gamma=0_eps}
  \|2\theta-1/2+n\omega\|\geq  e^{-\varepsilon^3 |n|}.
\end{align}
Within the proof, we divide into three cases, depending on the locations of $x_1$ and $\ell$.

\underline{Case 1. Both $x_1, \ell \in I_y$}.

Without loss of generality, we assume $y\geq 2(t-1)q_n+q_n$.
We have
\begin{align}
    \max(2tq_n-x_1, 2tq_n-\ell)\leq 7sq_m+(2tq_n-y)\leq 23sq_m,
\end{align}
as well as
\begin{align}
    \min(2tq_n-x_1,2tq_n-\ell)\geq sq_m+(2tq_n-y)\geq 9sq_m,
\end{align}
where we used \eqref{eq:s_qm} to bound $(2tq_n-y)=\mathrm{dist}(y,2q_n\Z)\in [8sq_m, 16sq_m]$.
Hence
\begin{align}\label{eq:x1_ell_1}
    \frac{(x_1-2tq_n)+(\ell-2tq_n)}{2}+4sq_m-3\in [-20sq_m, -5sq_m].
\end{align}
Therefore, when $n$ is large (hence $sq_m$ is large as well), by \eqref{eq:gamma=0_eps}, 
\begin{align}\label{eq:2theta_1_1}
    \|2\theta-1/2+(\frac{(x_1-2tq_n)+(\ell-2tq_n)}{2}+4sq_m-3)\omega\|\geq e^{-20\varepsilon^3 sq_m}\geq e^{-3\varepsilon^3 q_n},
\end{align}
where we used $8sq_m\leq \mathrm{dist}(y,2q_n\Z)\leq q_n$ in the last estimate.

Note that $q_{n+1}\geq e^{\varepsilon q_n}$ and $tq_n\leq 10q_{n+1}^{1-\varepsilon}$, hence
\begin{align}\label{eq:2theta_2_1}
    \|2tq_n\omega\|\leq \frac{2t}{q_{n+1}}\leq 20q_{n+1}^{-\varepsilon}\leq 20e^{-\varepsilon^2 q_n}\ll e^{-3\varepsilon^3 q_n}.
\end{align}
Combining \eqref{eq:2theta_1_1} with \eqref{eq:2theta_2_1}, we have
\begin{align}\label{eq:minmin_22_1}
    &\|2\theta-1/2+(\frac{x_1+\ell}{2}+4sq_m-3)\omega\|\notag\\
    \geq &\|2\theta-1/2+(\frac{(x_1-2tq_n)+(\ell-2tq_n)}{2}+4sq_m-3)\omega\|-\|2tq_n\omega\|\notag\\
    \geq &e^{-21\varepsilon^3 sq_m}.
\end{align}

\underline{Case 2. $x_1\in I_y$, $\ell\in I_0$ or $x_1\in I_0$, $\ell\in I_y$.}
The proof is similar to that of Case 1 above.
Without loss of generality, we assume $y\geq 2(t-1)q_n+q_n$ and $x_1\in I_y$, $\ell\in I_0$.
We have similar to \eqref{eq:x1_ell_1} that
\begin{align}\label{eq:x1_ell_2}
\frac{(x_1-2tq_n)+\ell}{2}+4sq_m-3\in [-11sq_m,-3sq_m].
\end{align}
Hence similar to \eqref{eq:2theta_1_1}, we have
\begin{align}\label{eq:2theta_1_2}
    \|2\theta-1/2+(\frac{(x_1-2tq_n)+\ell}{2}+4sq_m-3)\omega\|\geq e^{-11\varepsilon^3 sq_m}\geq e^{-2\varepsilon^3 q_n}.
\end{align}
Similar to \eqref{eq:2theta_2_1}, we have
\begin{align}
    \|tq_n\omega\|\leq 10e^{-\varepsilon^2 q_n}\ll e^{-2\varepsilon^3 q_n}.
\end{align}
Therefore, similar to \eqref{eq:minmin_22_1}, we have
\begin{align}\label{eq:minmin_22_2}
    \|2\theta-1/2+(\frac{x_1+\ell}{2}+4sq_m-3)\omega\|\geq e^{-12\varepsilon^3 sq_m}.
\end{align}

\underline{Case 3. Both $x_1, \ell \in I_0$.}
We have
\begin{align}
    \frac{x_1+\ell}{2}+4sq_m-3\in [-3sq_m,-sq_m].
\end{align}
Thus
\begin{align}
    \|2\theta-1/2+(\frac{x_1+\ell}{2}+4sq_m-3)\omega\|\geq e^{-3\varepsilon^3 sq_m}.
\end{align}
Combining all these three cases we have proved the claimed result of Lemma \ref{lem:minmin22}.
\end{proof}

The sum of all the $4s$ minimums can be estimated in a similar way as in Lemma \ref{lem:allmin_21}. 
\begin{lemma}\label{lem:allmin_22}
    \begin{align}
        \sum_{j=1}^{4s}\, \min_{\ell\in T_j} \ln \|2\theta-1/2+(\frac{x_1+\ell}{2}+4sq_m-3)\omega\|\geq -(8\beta+16)\varepsilon sq_m.
    \end{align}
\end{lemma}
\begin{proof}[Proof of Lemma \ref{lem:allmin_22}]
For each $w=1,...,4$, we define
\begin{align}
    \tilde{S}_w:=\sum_{j=(w-1)s}^{ws-1}\min_{\ell\in T_j}\ln \|2\theta-1/2+(\frac{x_1+\ell}{2}+4sq_m-3)\omega\|.
\end{align}
Within each $\tilde{S}_w$, the $\ell$'s come from $T^{(w)}$. Hence for two different $\ell,\ell'\in T^{(w)}$, we have, $|\ell-\ell'|<2sq_m<2q_{m+1}$, hence similar to \eqref{eq:x1-ell-x1-ell'}, we have
\begin{align}
    &\|\left(2\theta-1/2+(\frac{x_1+\ell}{2}+4sq_m-3)\omega\right)-\left(2\theta-1/2+(\frac{x_1+\ell'}{2}+4sq_m-3)\omega\right)\|\\
    =&\|\frac{\ell-\ell'}{2}\omega\|\geq \|q_m\omega\|.
\end{align}
This implies, similar to \eqref{eq:Sw} that
\begin{align}
    \tilde{S}_w
    \geq &2\min_{j=1}^{4s}\min_{\ell\in T_j}\ln \|2\theta-1/2+(\frac{x_1+\ell}{2}+4sq_m-3)\omega\|-(2\beta+2)\varepsilon sq_m\\
    \geq &-(2\beta+4)\varepsilon sq_m.
\end{align}
This proves the claimed result of Lemma \ref{lem:allmin_22}. 
\end{proof}
Combining Lemma \ref{lem:allmin_22} with \eqref{eq:sum22}, we have
\begin{align}\label{eq:sum22_nonres}
\sum_{2,2}\geq -4sq_m\ln 2-(8\beta+17)\varepsilon sq_m.
\end{align}
This proves Lemma \ref{lem:sum22} and completes the estimates of $\sum_{2,2}$, hence concludes the proof of Lemma \ref{lem:sum2_nonres} as well.
\end{proof}

Next, we move on and apply Lemma \ref{lem:I1_I2_large_nonres} to establish decay of the eigenfunction.

\subsection{Decay of eigenfunctions}\label{sec:eigen_nonres}

Lemma \ref{lem:I1_I2_large_nonres} implies the following lower bound of denominator over $I_y$.
\begin{lemma}\label{lem:I2_large_nonres}
For $n$ large enough,
\begin{align}\label{eq:P_den_large_nonres}
    \max_{x_1\in I_y} |P_{[x_1,x_1+h-1]}(\theta)|\geq e^{\frac{h}{2}({L_+}-2\varepsilon_0)}.
\end{align}
\end{lemma}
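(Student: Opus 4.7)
The plan is a proof by contradiction using Lemma \ref{lem:I1_I2_large_nonres} as the sole input on the determinant side. Suppose the conclusion of Lemma \ref{lem:I2_large_nonres} fails, i.e.\
\[
|P_{[x_1,x_1+h-1]}(\theta)| < e^{h(L_+-2\varepsilon_0)/2}\qquad\text{for every } x_1 \in I_y.
\]
Combined with Lemma \ref{lem:I1_I2_large_nonres}, the required lower bound must then be attained at some $x_1 \in I_0 = [-7sq_m,-5sq_m+1]\cap(2\Z+1)$, so in fact $|P_{[x_1,x_1+h-1]}(\theta)| \geq e^{h(L_+-2\varepsilon_0)/2}$ for such an $x_1$. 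The geometric point I would exploit is that this $x_1$ straddles the origin: both $0$ and $-1$ lie in the interval $[x_1,x_1+h-1]$, and if I set $p:=|x_1|$ and $q:=x_1+h-1$, then $p+q=h-1$, $p\in[5sq_m-1,7sq_m]$ and $q\in[sq_m-3,3sq_m-2]$; in particular $\min(p,q)\gtrsim sq_m$.

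Next, I would plug this $x_1$ into the Poisson formula \eqref{eq:Poisson_1} at the two sites $y=0$ and $y=-1$ with $[a,b]=[x_1,x_1+h-1]$. For each numerator determinant of length $\ell$ I would use that combining \eqref{eq:Sze2}, Lemma \ref{lem:upperbounds} and \eqref{eq:rho_bd} yields the uniform-in-$\theta$ bound $|P_{[a',b']}(\theta)|\leq e^{\ell(L_+/2+2\varepsilon)}$; the $\rho$-products of length $\ell$ are bounded directly by $e^{\ell(L_-/2+\varepsilon)}$ via \eqref{eq:rho_bd}. Feeding these into \eqref{eq:Poisson_1} together with the assumed lower bound on $|P_{[x_1,x_1+h-1]}(\theta)|$ and the identity $L=L_+-L_-$, the exponents in the two Poisson terms collapse (modulo an $O(h(\varepsilon+\varepsilon_0))$ loss and the trivial $(p+q)-h=-1$ offset) to $-pL/2$ and $-qL/2$ respectively. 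The polynomial prefactor $\max(|\Psi_{x_1-1}|,|\Psi_{x_1}|,|\Psi_{x_1+h-1}|,|\Psi_{x_1+h}|)\leq C(1+|x_1|+h)=O(sq_m)$ coming from the generalized-eigenfunction bound is absorbed.

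By \eqref{eq:choose_eps} and $\varepsilon_0<(L-\beta(\omega))/100$, the loss $O(h(\varepsilon+\varepsilon_0))$ is much smaller than $\min(p,q)L/2\gtrsim sq_m L/2$, so both $|\Psi_0|$ and $|\Psi_{-1}|$ are bounded by $e^{-c\,sq_m L}$ for some $c>0$. Since \eqref{eq:s_qm} together with $q_{m+1}\geq q_n^{1-\varepsilon}/8$ forces $sq_m\to\infty$ as $n\to\infty$, this contradicts the normalization $\|(\Psi_0,\Psi_{-1})\|=1$ for $n$ large enough, and therefore the assumed $x_1$ cannot lie in $I_0$.

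I expect the main obstacle to be the exponent bookkeeping: the $e^{-hL_+/2}$ factor from the denominator must be canceled exactly by the combined $e^{(p+q)L_+/2}$ contribution of the two numerator determinants, leaving only the half-Lyapunov exponent $L/2$ multiplied by the ``wing length'' $p$ or $q$. This relies on the identity $L=L_+-L_-$ and on $p+q=h-1$, and the smallness $\varepsilon_0\ll L$ is then what prevents the residual $\varepsilon_0 h$ loss from swamping the $\min(p,q)L/2\gtrsim sq_m L/2$ gain. Compared with the Diophantine case in \cite{FanUAMO}, all ingredients are in place; the arithmetic input enters only through the choice of $s$ and $q_m$ adapted to $\mathrm{dist}(y,2\Z q_n)$ in \eqref{eq:s_qm}, which is exactly what ensures that the two wings $p$ and $q$ remain comfortably of order $sq_m$.
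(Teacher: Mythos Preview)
Your proposal is correct and follows essentially the same route as the paper: assume the good $x_1$ from Lemma~\ref{lem:I1_I2_large_nonres} lies in $I_0$, feed the resulting interval $[x_1,x_1+h-1]\ni\{-1,0\}$ into the Poisson formula \eqref{eq:Poisson_1} at $y=0,-1$, bound the numerators via the Szeg\H{o} cocycle (\eqref{eq:Sze2}, Lemma~\ref{lem:upperbounds}, \eqref{eq:rho_bd}), and use $\min(|x_1|,|x_1+h-1|)\gtrsim sq_m$ together with $L=L_+-L_-$ to force $\|(\Psi_0,\Psi_{-1})\|\to 0$. The only cosmetic difference is that the paper packages the numerator bound as $\prod|\rho_j|\cdot|P|\le\prod|\rho_j|\cdot\|S_{-1,z}\cdots S_{x_1-1,z}\|$ directly (display \eqref{eq:nu_1}--\eqref{eq:nu_2}), whereas you bound $|P|$ and $\prod|\rho_j|$ separately; the arithmetic is the same.
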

\begin{proof}[Proof of Lemma \ref{lem:I2_large_nonres}]
    It suffices to prove
    \[\max_{x_1\in I_0} |P_{[x_1,x_1+h-1]}(\theta)|<e^{\frac{h}{2}(L_+-2\varepsilon_0)}.\]
    Proof by contradiction, assume there exists $x_1\in I_0$ such that $|P_{[x_1,x_1+h-1]}(\theta)|\geq e^{\frac{h}{2}(L_+-2\varepsilon_0)}$.
    Let $x_2=x_1+h-1$. By the definition of $I_0$, we have
\begin{align}\label{eq:y-x1_y-x2_nonres_0}
sq_m\leq \min(|x_1|, |x_2|)\leq \max(|x_1|,|x_2|)\leq  7sq_m.
\end{align}
Combining Theorem \ref{thm:LE}, Lemma \ref{lem:upperbounds}, with \eqref{eq:rho_bd} and \eqref{eq:Sze2}, we have
\begin{align}\label{eq:nu_1}
\prod_{j=1}^{x_2}|\rho_j|\cdot |P_{[x_1,-1],z}(\theta)|
\leq &\prod_{j=1}^{x_2} |\rho_j| \prod_{j=x_1-1}^{-1} |\rho_j|\cdot  \|S_{-1,z}\cdots S_{x_1-1,z}\|\\
\leq &e^{\frac{h}{2}(L_-+\varepsilon_0)} e^{\frac{|x_1|}{2}(L+\varepsilon_0)}
=e^{\frac{|x_1|}{2}(L_++\varepsilon_0)} e^{\frac{|x_2|}{2}(L_-+\varepsilon_0)},
\end{align}
and similarly
\begin{align}\label{eq:nu_2}
\prod_{j=x_1}^{-1} |\rho_j| \cdot |P_{[1,x_2],z}(\theta)|\leq e^{\frac{|x_2|}{2}(L_++\varepsilon_0)} e^{\frac{|x_1|}{2}(L_-+\varepsilon_0)}.
\end{align}

Finally, combining \eqref{eq:P_den_large_nonres}, \eqref{eq:nu_1}, \eqref{eq:nu_2} with the Poisson formula \eqref{eq:Poisson_1},
\begin{align}\label{eq:nu_3_nonres_0}
|\Psi_0|
\leq &\max(e^{-(\frac{L}{2}-12\varepsilon_0)|x_1|} \max(|\Psi_{x_1-1}|, |\Psi_{x_1}|), e^{-(\frac{L}{2}-12\varepsilon_0)|x_2|} |\max(|\Psi_{x_2}|,|\Psi_{x_2+1}|))\\
\leq &C sq_m e^{-(\frac{L}{2}-12\varepsilon_0)sq_m}
\to 0,
\end{align}
as $n\to\infty$.
The same argument clearly implies $|\Psi_{-1}|\to 0$, which contradictions with $\|(\Psi_0,\Psi_{-1})\|=1$. 
This completes the proof of Lemma \ref{lem:I2_large_nonres}.
\end{proof}

Now we will establish exponential decay of eigenfunction from non-resonant regime to resonant regime.

By Lemma \ref{lem:I2_large_nonres}, for $y$ such that $2(t-1)q_n+q_n^{1-\varepsilon}<y<2tq_n$ and $q_n^{1-\varepsilon}<y<10q_{n+1}^{1-\varepsilon}$, there exists $x_1\in I_y$ such that $|P_{[x_1,x_1+h-1]}(\theta)|\geq e^{\frac{h}{2}(L_+-2\varepsilon_0)}$ holds.
Let $x_2=x_1+h-1$. Similar to \eqref{eq:nu_3_nonres_0}, we have
\begin{align}\label{eq:nu_3_nonres_y}
&\qquad |\Psi_y|\leq \max(e^{-(\frac{L}{2}-12\varepsilon_0)|y-x_1|} \max(|\Psi_{x_1-1}|, |\Psi_{x_1}|), e^{-(\frac{L}{2}-12\varepsilon_0)|y-x_2|} |\max(|\Psi_{x_2}|,|\Psi_{x_2+1}|)).
\end{align}
Also, since $\min(|y-x_1|,|y-x_2|)\geq sq_m\geq \mathrm{dist}(y,2\Z q_n)/8$, we have
\begin{align}\label{eq:nu_3_nonres_y'}
|\Psi_y|\leq e^{-\frac{1}{8}(\frac{L}{2}-12\varepsilon_0)\cdot \mathrm{dist}(y,2\Z q_n)} \max(|\Psi_{x_1-1}|, |\Psi_{x_1}|, |\Psi_{x_2}|, |\Psi_{x_2+1}|).
\end{align}
Rewrite the inequality \eqref{eq:nu_3_nonres_y} as follows, denoting $y=y_0$,
\begin{align}\label{eq:nu_3_new_nonres}
|\Psi_y|\leq \max_{y_1\in \{x_1-1, x_1, x_2, x_2+1\}} e^{-(\frac{L}{2}-12\varepsilon_0)|y_0-y_1|} |\Psi_{y_1}|.
\end{align}
Starting with a  $y=y_0\in \mathcal{N}_{2(t-1),2t}$, we iterate \eqref{eq:nu_3_new_nonres} until we reach at
$y_k$, where either

Case 1. $|y_k-2tq_n| \leq q_n^{1-\varepsilon}$, or 

Case 2. $|t_k-2(t-1)q_n| \leq q_n^{1-\varepsilon}$, or 

Case 3. $k=8q_n^{\varepsilon}$

Hence we have by iterations of \eqref{eq:nu_3_new_nonres} that
\begin{align}\label{eq:nu4_nonres}
|\Psi_y|\leq \max_{(y_1,y_2,...,y_k)\in \mathcal{G}} e^{-(\frac{L}{2}-12\varepsilon_0) \sum_{j=1}^k |y_{j-1}-y_j|} |\Psi_{y_k}|,
\end{align}
where $\mathcal{G}=\{(y_1,y_2,...,y_k):\, y_k \text{ satisfies one of the three cases above}\}$.

If $y_k$ satisfies Case 1, we have by triangle inequality that
\begin{align}\label{eq:nu5_1_nonres}
\sum_{j=1}^k |y_{j-1}-y_j| \geq |y_0-y_k|\geq \mathrm{dist}(y,2\Z q_n)-q_n^{1-\varepsilon}.
\end{align}
And we bound $|\Psi_{y_k}|\leq r_{2t}$.

If $y_k$ satisfies Case 2, we have similarly that
\begin{align}\label{eq:nu5_2_nonres}
\sum_{j=1}^k |y_{j-1}-y_j| \geq \mathrm{dist}(y,2\Z q_n)-q_n^{1-\varepsilon}.
\end{align}
And we bound $|\Psi_{y_k}|\leq r_{2(t-1)}$.

If $y_k$ satisfies Case 3, then since the iteration did not stop due to Case 1 or 2, we have for each $1\leq j\leq k$ that $\mathrm{dist}(y_j, 2\Z q_n)\geq q_n^{1-\varepsilon}$, hence by \eqref{eq:nu_3_nonres_y'},
\begin{align}\label{eq:nu5_3_nonres}
|\Psi_y|
\leq &e^{-\frac{kq_n^{1-\varepsilon}}{8}(\frac{L}{2}-12\varepsilon_0)} \max_{y'\in [2(t-1)q_n+q_n^{1-\varepsilon}, 2tq_n-q_n^{1-\varepsilon}]} |\Psi_{y'}|\notag\\
\leq &e^{-(\frac{L}{2}-12\varepsilon_0)q_n} |\Psi_{y_{\mathrm{max}}}|,
\end{align}
where $y_{\mathrm{max}}$ is such that $\Psi_{y_{\mathrm{max}}}:=\max_{y'\in \mathcal{N}_{2(t-1),2t}} |\Psi_{y'}|$ and we used $k\geq 8q_n^{\varepsilon}$.
Combining the three cases, for any arbitrary $y\in \mathcal{N}_{2(t-1),2t}$, 
\begin{align}\label{eq:Psi_y_decay}
    |\Psi_y|\leq 
    &\max\left(e^{-(\frac{L}{2}-12\varepsilon_0)(\mathrm{dist}(y,2\Z q_n)-q_n^{1-\varepsilon})}\max(r_{2t}, r_{2(t-1)}),\, e^{-(\frac{L}{2}-12\varepsilon_0)q_n} |\Psi_{y_{\mathrm{max}}}|\right)
\end{align}
In particular, this holds for $y=y_{\max}$, yielding
\begin{align}
    |\Psi_{y_{\mathrm{max}}}|\leq e^{-(\frac{L}{2}-12\varepsilon_0)(\mathrm{dist}(y_{\mathrm{max}},2\Z q_n)-q_n^{1-\varepsilon})}\max(r_{2t}, r_{2(t-1)})\leq \max(r_{2t},r_{2(t-1)}).
\end{align}
Plugging this into \eqref{eq:Psi_y_decay} yields
the claimed result of Theorem \ref{lem:non-res}. \qed

\section{Eigenfunction in the resonant regimes in the strong Liouville scale}\label{sec:res}
\begin{theorem}\label{lem:r2t}
For any $t\in \Z$ such that $0\neq |t|<2q_n^{-1}q_{n+1}^{1-\varepsilon}$, we have
\[
r_{2t}\leq e^{-(L-\beta(\omega)-26\varepsilon_0)\,tq_n}.
\]
\end{theorem}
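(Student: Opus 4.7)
The plan is to adapt the block-structured Lagrange--Poisson analysis of Section \ref{sec:non_res} to blocks of length $h$ of order $2tq_n$, so that the Poisson formula yields decay of $|\Psi_y|$ across the entire distance from the origin to $y \in R_{2t}$. Without loss of generality, take $t > 0$ and fix $y \in R_{2t}$ with $|\Psi_y| = r_{2t}$, so $|y - 2tq_n| \leq q_n^{1-\varepsilon}$ and $|y| \approx 2tq_n$. Rescale the parameters $(m, s)$ of Section \ref{sec:non_res} so that $sq_m \sim tq_n$; the hypothesis $t < 2q_n^{-1}q_{n+1}^{1-\varepsilon}$ ensures $sq_m < 2q_{n+1}^{1-\varepsilon}$, which keeps the block structure admissible. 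Define intervals $I_0$ near the origin and $I_y$ near $y$, modified from \eqref{def:I0Iy} to remain disjoint at this enlarged scale, so that blocks $[x_1, x_1 + h - 1]$ for $x_1 \in I_y$ contain $y$ with $|y - x_1|$ and $|y-(x_1+h-1)|$ both of order $tq_n$, and blocks for $x_1 \in I_0$ contain the origin.

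Apply Lagrange interpolation as in the proof of Lemma \ref{lem:I1_I2_large_nonres}. The estimates of $\sum_1$ and $\sum_{2,2}$ go through essentially unchanged, with $\sum_{2,2}$ still controlled by the non-resonance condition $\tilde\gamma(\omega,\theta) = 0$. The new input lies in $\sum_{2,1}$: for $x_1$ and $\ell$ in opposite members of $\{I_0, I_y\}$, the shift $(x_1 - \ell)/2$ ranges through a window of length $O(sq_m)$ centered near $\pm tq_n$, and this window contains $\sim t$ near-resonances at spacing $q_n$. Each near-resonance at $jq_n$ with $1 \leq j \leq t$ contributes $\ln \|jq_n\omega\| \leq \ln(j/q_{n+1}) \approx -\beta(\omega)q_n$. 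An adaptation of Lemmas \ref{lem:allmin_21}--\ref{lem:sum22}, with a sub-Lagrange argument at the $q_n$-scale nested inside the $q_m$-scale $T_j$ decomposition, shows that the total loss in $\sum_{2,1}$ is at most $(\beta(\omega) + O(\varepsilon_0))\,tq_n$. Combining these estimates yields
\[
\max_{x_1 \in I_0 \cup I_y} |P_{[x_1, x_1 + h - 1]}(\theta)| \geq e^{\frac{h}{2}L_+ - (\beta(\omega) + O(\varepsilon_0))\,tq_n}.
\]

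For the dichotomy, argue as in Lemma \ref{lem:I2_large_nonres}. If the max is attained at some $x_1 \in I_0$, the Poisson formula applied to the block (which contains the origin), together with Lemma \ref{lem:upperbounds} and the polynomial bound $|\Psi_w| \leq C(1 + |w|) \leq C(1 + tq_n)$, gives
\[
|\Psi_0|,\; |\Psi_{-1}| \;\leq\; C(1 + tq_n)\, e^{-(L - \beta(\omega) - O(\varepsilon_0))\,tq_n} \;\to\; 0 \quad\text{as } n \to \infty,
\]
contradicting $\|(\Psi_0, \Psi_{-1})\| = 1$. Hence the max is in $I_y$, and a second application of the Poisson formula with a block around $y$ at distance $\sim tq_n$ from either end, together with the polynomial bound on the boundary values, delivers the desired
\[
|\Psi_y| \leq e^{-(L - \beta(\omega) - 26\varepsilon_0)\,tq_n}.
\]

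The principal obstacle is the precise accounting of the $\sim t$ cross-resonances in $\sum_{2,1}$: there is a hierarchy of scales (resonances at $q_n$ nested inside the $q_m$-scale $T_j$ decomposition), and the $t$ resonance contributions must be summed without double-counting while keeping additional Lagrange-type errors subsumed by $O(\varepsilon_0)$. This is a refinement of the proof of Lemma \ref{lem:allmin_21}, allowing the minimum-of-minimums estimate to be applied at the $q_n$-scale within each $q_m$-scale $T_j$. A secondary subtlety is to verify that the dichotomy contradiction remains valid when $\beta(\omega) \in (L/2, L)$: since the net decay exponent is $(L - \beta(\omega))\,tq_n$ rather than $(L/2 - \beta(\omega))\,tq_n$, it still dominates the polynomial prefactor $\ln(tq_n)$ by the standing hypothesis $\beta(\omega) < L$.
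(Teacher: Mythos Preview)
Your approach is genuinely different from the paper's, and it contains a real gap. The paper does \emph{not} enlarge the block to size $\sim tq_n$; it keeps $h=4q_n-2$ fixed, proves a one-step inequality
\[
r_{2t}\le \max\!\Big(e^{-(L-\tfrac{\beta}{2}-25\varepsilon_0)2q_n}\max(r_{2(t\pm 2)}),\ e^{-(L-\beta-25\varepsilon_0)q_n}\max(r_{2(t\pm 1)})\Big),
\]
and then iterates this $t$ times. Two features of the paper's argument are essential and are missing from yours: (i) with $h=4q_n-2$ there is exactly \emph{one} cross-resonance in $\sum_{2,1}$, so the Lagrange loss is $\beta q_n$, not a multiple of it; (ii) when the block endpoint $j$ or $x_2$ is only at distance $\sim q_n$ from $0$ (so Poisson alone gives $(L/2-\beta)q_n$), the paper invokes Theorem~\ref{lem:non-res} to harvest an additional $(L/2)q_n$ of decay from the non-resonant region, yielding the full $(L-\beta)q_n$.

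In your large-block scheme the arithmetic does not close. With $h\sim 4tq_n$ you need $|I_0|+|I_y|=h/2+1\sim 2tq_n$ interpolation nodes, so $I_0$ must be an interval of length $\sim 2tq_n$. For $x_1\in I_0$ this forces $\min(|x_1|,|x_2|)\sim tq_n$, not $2tq_n$; the Poisson factor is therefore $e^{-(L/2)tq_n}$. Meanwhile the Lagrange loss is \emph{at least} $\beta tq_n$ (and a careful count of the $\sim t$ intra- and cross-resonances at multiples of $q_n$ actually gives a larger constant). So the dichotomy yields only $|\Psi_0|\lesssim e^{-(L/2-\beta)tq_n}$, which fails when $\beta\in[L/2,L)$; you acknowledge the issue but assert the exponent is $(L-\beta)tq_n$ without a mechanism to produce the missing factor of $2$. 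The same shortfall appears in the final bound on $|\Psi_y|$: with endpoints at distance $\sim tq_n$ from $y$ and only the polynomial bound on $|\Psi_{x_i}|$, you again get $(L/2-\beta)tq_n$. There is no analogue of Theorem~\ref{lem:non-res} available here to make up the difference, because the endpoints of your large block can themselves land in resonant windows $R_{2t'}$, whose size is exactly what you are trying to bound. The fix is the paper's: keep the block at scale $q_n$, use Theorem~\ref{lem:non-res} for the endpoints, and iterate.
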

This whole section is devoted to the proof of this theorem.

Let $h=4q_n-2$ and set 
\begin{align*}
\begin{cases}
I_0:= [-3q_n, -q_n-1]\cap (2\Z+1), \\
I_y:=[y-3q_n, y-q_n-1]\cap (2\Z+1).
\end{cases}
\end{align*}
Note that $|I_0|=|I_y|=q_n$. Hence $|I_0\cup I_y|=2q_n=h/2+1$.

The first lemma we will prove is the following:
\begin{lemma}\label{lem:I1_I2_large_res}%%%32
There exists $j\in I_0\cup I_y$ such that 
    \begin{align}\label{eq:P_den_large_res}
         |P_{[j,j+h-1]}(\theta)|\geq e^{2q_n(L_+-\frac{\beta}{2}-2\varepsilon_0)}.
    \end{align}
\end{lemma}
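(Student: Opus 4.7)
I will mimic the contradiction argument of Lemma~\ref{lem:I1_I2_large_nonres}, interpolating at the $|I_0\cup I_y|=2q_n=h/2+1$ nodes $\theta_\ell:=\theta+\tfrac{\ell-1}{2}\omega+\tfrac{h/2-1}{2}\omega-\tfrac14$, $\ell\in I_0\cup I_y$. Suppose for contradiction that $|P_{[j,j+h-1]}(\theta)|<e^{2q_n(L_+-\beta/2-2\varepsilon_0)}$ for every $j\in I_0\cup I_y$. Combining Lemma~\ref{lem:main} with Lagrange interpolation at these nodes yields
\[
\sup_{\tilde\theta\in\T}|P_{[1,h]}(\tilde\theta)|\le(2q_n)\,e^{2q_n(L_+-\beta/2-2\varepsilon_0)}\sup_{\tilde\theta,\,x_1}|U(\cos(2\pi\tilde\theta),x_1,h,I_0\cup I_y)|.
\]
Once I show the uniform estimate $|U|\le e^{2q_n(\beta/2+\varepsilon_0)}$, the right-hand side is at most $e^{2q_n(L_+-\varepsilon_0)+O(\ln q_n)}$, contradicting the Jensen lower bound $\sup|P_{[1,h]}|\ge e^{(h/2)(L_+-\varepsilon)}$ that follows from Lemma~\ref{lem:ave_low}, since $\varepsilon_0>\varepsilon$ by \eqref{eq:choose_eps}. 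All the work is therefore in the uniform bound on $U$.

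Decomposing $\ln|U|=\sum_1-\sum_2$ and $\sum_2=(2q_n-1)\ln 2+\sum_{2,1}+\sum_{2,2}$ exactly as in Lemma~\ref{lem:non_res_uni}, the numerator is routine: applying Lemma~\ref{lem:nonmin} separately to each of the two clusters $I_0$ and $I_y$ (each a set of $q_n$ consecutive odd integers) gives $\sum_1\le-(2q_n+1)\ln 2+O(\ln q_n)$ uniformly. The bulk of the work is the matching lower bound on $\sum_2$.

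For $\sum_{2,1}=\sum_{\ell\neq x_1}\ln|\sin(\pi(x_1-\ell)\omega/2)|$, WLOG take $x_1\in I_y$ and split according to $\ell\in I_y\setminus\{x_1\}$ versus $\ell\in I_0$. In the intra-cluster piece, $(x_1-\ell)/2$ takes $q_n-1$ nonzero integer values of absolute value strictly less than $q_n$, so by \eqref{appro1} each divisor is bounded below by $\|q_{n-1}\omega\|\ge 1/(2q_n)$ and Lemma~\ref{lem:nonmin} yields $\ge-(q_n-1)\ln 2+O(\ln q_n)$. The cross-cluster piece is the essential new ingredient: as $\ell$ sweeps $I_0$, $(x_1-\ell)/2$ ranges over $q_n$ consecutive integers inside a window of width $q_n$ positioned around $tq_n$. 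Such a window meets at most one multiple $t'q_n$ with $t'\in\{t-1,t,t+1\}$, so exactly one small divisor $\|t'q_n\omega\|\le 2t/q_{n+1}$ is introduced; Lemma~\ref{lem:nonmin} with that minimum removed gives $\ge-(q_n-1)\ln 2+O(\ln q_n)$, and adding back the small factor using $\ln q_{n+1}\le(\beta+\varepsilon)q_n$ contributes the single loss $-\beta q_n$. Summing the two pieces, $\sum_{2,1}\ge-2q_n\ln 2-\beta q_n-O(\varepsilon q_n)$.

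For $\sum_{2,2}$, whose terms are $\ln|\sin(\pi(2\theta-1/2+((x_1+\ell)/2+h/2-2)\omega))|$, the integer inside $\|\cdot\|$ equals $2tq_n+O(q_n)$, $tq_n+O(q_n)$, or $O(q_n)$ according to whether $(x_1,\ell)$ belongs to $I_y\times I_y$, is mixed, or lies in $I_0\times I_0$. In the strong Liouville regime $q_{n+1}>e^{\varepsilon q_n}$, together with $|t|<2q_n^{-1}q_{n+1}^{1-\varepsilon}$, both $\|2tq_n\omega\|$ and $\|tq_n\omega\|$ are at most $4q_{n+1}^{-\varepsilon}/q_n\ll e^{-\varepsilon^3 q_n}$, so after shifting the large multiple of $q_n$ off the argument, the residual has size $O(q_n)$ and the non-resonance $\tilde\gamma(\omega,\theta)=0$ forces each cluster-minimum to be $\ge e^{-O(\varepsilon^3 q_n)}$. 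Lemma~\ref{lem:nonmin} applied cluster-by-cluster then yields $\sum_{2,2}\ge-2q_n\ln 2-O(\varepsilon q_n)$. Combining the three pieces, $\sum_1-\sum_2\le\beta q_n+O(\varepsilon q_n)\le 2q_n(\beta/2+\varepsilon_0)$, closing the contradiction. The delicate point I expect to be the main obstacle is the cross-cluster piece of $\sum_{2,1}$: verifying that exactly one---not two---small divisors enter, which relies critically on the choice $h=4q_n-2$ making each cluster width exactly $q_n$, so that precisely one multiple of $q_n$ can fall inside the window.
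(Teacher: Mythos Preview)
Your proposal is correct and follows essentially the same approach as the paper: contradiction via Lagrange interpolation at the $2q_n$ nodes, the decomposition $\ln|U|=\sum_1-\sum_2$ with $\sum_2=(2q_n-1)\ln 2+\sum_{2,1}+\sum_{2,2}$, and the identification of the single cross-cluster small divisor $\|t'q_n\omega\|$ (with $t'\in\{t-1,t,t+1\}$) as the source of the $\beta q_n$ loss in $\sum_{2,1}$, while $\sum_{2,2}$ is handled by shifting off the $tq_n$ multiples and invoking $\tilde\gamma(\omega,\theta)=0$. The paper packages the uniform bound as a separate lemma (their Lemma~\ref{lem:uni_res}) with auxiliary Lemmas~\ref{lem:21_res} and~\ref{lem:22_res} for $\sum_{2,1}$ and $\sum_{2,2}$, but the content is the same as what you outline.
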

\begin{proof}[Proof of Lemma \ref{lem:I1_I2_large_res}]
Let $\theta_{\ell} := \theta+\frac{\ell-1}{2}\omega+\frac{h/2-1}{2}\omega-\frac{1}{4}$.
Proof by contradiction, assume 
\begin{align}
    \max_{j\in I_0\cup I_y} |P_{[j,j+h-1]}(\theta)|<e^{2q_n(L_+-\frac{\beta}{2}-2\varepsilon_0)}.
\end{align}
Then, similar to \eqref{eq:P<U_non_res},
\begin{align}
    \sup_{\xi\in [-1,1]}|g_{2q_n-1}(\xi)|\leq 2q_n e^{2q_n(L_+-\frac{\beta}{2}-2\varepsilon_0)} \cdot  \sup_{\xi\in [-1,1]} \max_{j\in I_0\cup I_y} |U(\xi, j, 4q_n-2, I_0\cup I_y)|.
\end{align}
Combining this with the following Lemma \ref{lem:uni_res}, we arrive at a contradiction to Lemma \ref{lem:ave_low}.
\begin{lemma}\label{lem:uni_res}
For $n$ large enough, 
    \begin{align}
        \sup_{\tilde{\theta}\in \T} \max_{j\in I_0\cup I_y} |U(\cos(2\pi\tilde{\theta}), j, 4q_n-2, I_0\cup I_y)|\leq e^{(\frac{\beta}{2}+\varepsilon_0)2q_n}.
    \end{align}
\end{lemma}
Therefore, Lemma \ref{lem:I1_I2_large_res} is proved up to the proof of Lemma \ref{lem:uni_res}.
\end{proof}

\subsection{Proof of Lemma \ref{lem:uni_res}}\label{sec:uni_res}
Without loss of generality we assume $y$ is even and $sq_m$ is even. 
\begin{small}
\begin{align}\label{eq:sum1-sum2_res}
    &\ln |U(\cos(2\pi\tilde{\theta}),j,4q_n-2,I_0\cup I_y)| \notag\\
    =& \ln \prod_{\substack{\ell\in I_0\cup I_y\\ \ell \neq j}} \left| \frac{\cos 2\pi\tilde{\theta}-\cos 2\pi \theta_\ell}{\cos 2\pi \theta_j-\cos 2\pi \theta_\ell}\right|\notag\\
    =&\sum_{\substack{\ell\in I_0\cup I_y\\ \ell\neq j}}\ln |\cos2\pi \tilde{\theta}-\cos 2\pi \theta_\ell| -\sum_{\substack{\ell\in I_0\cup I_y\\ \ell\neq j}} \ln |\cos 2\pi \theta_j-\cos 2\pi \theta_\ell|\notag\\
=:&\sum_1-\sum_2,
\end{align}
\end{small}
in which $\tilde{\theta}\in \T$ be arbitrary and $j\in I_0\cup I_y$.
We have by Lemma \ref{lem:nonmin}, 
\begin{align}\label{eq:sum1_res}
\sum_1=&(2q_n-1)\ln 2 \notag\\
&+\sum_{\substack{\ell\in I_0\cup I_y\\ \ell\neq j}}\ln |\sin(\pi(\tilde{\theta}-\theta_{\ell}))|+\sum_{\substack{\ell\in I_0\cup I_y\\ \ell\neq j}} \ln |\sin(\pi(\tilde{\theta}+\theta_{\ell}))| \notag\\
\leq &2q_n \ln 2 +4(C\ln q_n-(q_n-1)\ln 2)\notag\\
\leq &2q_n(-\ln 2+\varepsilon).
\end{align}

The main difficulty lies in obtaining a sharp lower bound of $\sum_2$.
\begin{align}\label{eq:sum2=sum2122_res}
\sum_2=&(2q_n-1)\ln 2+\sum_{\substack{\ell\in I_0\cup I_y\\ \ell\neq j}}\ln|\sin(\pi\frac{j-\ell}{2}\omega)|\notag\\
&+
\sum_{\substack{\ell\in I_0\cup I_y\\ \ell\neq j}}\ln|\sin(\pi(2\theta-\frac{1}{2}+(\frac{j+\ell}{2}+2q_n-3)\omega))| \notag\\
=:&(2q_n-1)\ln 2+\sum_{2,1}+\sum_{2,2}.
\end{align}

\subsubsection{Estimates of $\sum_{2,1}$}
We first prove the following estimate regarding $\sum_{2,1}$.
\begin{lemma}\label{lem:21_res}
    \begin{align}
        \sum_{2,1}\geq (-2\ln 2-\beta-\varepsilon)q_n.
    \end{align}
\end{lemma}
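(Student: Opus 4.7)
The plan is to split $\sum_{2,1}$ according to which of $I_y$ or $I_0$ contains $\ell$. Without loss of generality take $j\in I_y$ and write $j=c_y+2k$ with $c_y$ the leftmost odd integer in $I_y$ and $k\in\{0,\ldots,q_n-1\}$; set
\[
A:=\sum_{\ell\in I_y\setminus\{j\}} \ln\bigl|\sin\tfrac{\pi(j-\ell)\omega}{2}\bigr|,\qquad
C_f:=\sum_{\ell\in I_0\setminus\{j\}} \ln\bigl|\sin\tfrac{\pi(j-\ell)\omega}{2}\bigr|.
\]
Any element of $(I_0\cap I_y)\setminus\{j\}$ is double-counted by $A+C_f$, but each such term is nonpositive since $|\sin|\le 1$, so $\sum_{2,1}\ge A+C_f$ and it suffices to bound $A+C_f$ from below.

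For $A$, the substitution $\ell=c_y+2i$ together with the evenness $|\sin\pi(k-i)\omega|=|\sin\pi(i-k)\omega|$ and the identity $|\sin\pi m\omega|=|\cos\pi(m\omega-\tfrac12)|$ recasts $A$ as $\sum_{j\ne k}\ln|\cos\pi(\theta+j\omega)|$ with $\theta=-k\omega-\tfrac12$. The minimum of $|\cos\pi(\theta+j\omega)|$ over $j\in\{0,\ldots,q_n-1\}$ is attained at $j_0=k$ (with value $0$), which is exactly the excluded index, so Lemma~\ref{lem:nonmin} gives $A\ge -(q_n-1)\ln 2 - C\ln q_n$.

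For $C_f$, writing $\ell=c_0+2i$ and $y/2=tq_n+r_y$ with $r_y\in\Z$ and $|r_y|\le q_n^{1-\varepsilon}/2$, the quantity $\tfrac{j-\ell}{2}$ takes a value $N\in\mathcal N$ as $\ell$ varies, where $\mathcal N=\{(t-1)q_n+r_y+k+1,\ldots,tq_n+r_y+k\}$ is a block of $q_n$ consecutive integers. Applying Lemma~\ref{lem:nonmin} after the analogous phase shift yields
\[
\sum_{N\in\mathcal N\setminus\{N^*\}}\ln|\sin\pi N\omega|\ \ge\ -(q_n-1)\ln 2-C\ln q_n,
\]
where $N^*$ is the unique multiple of $q_n$ in $\mathcal N$ and is the minimizer of $\|N\omega\|$ on $\mathcal N$ (the other $N$ satisfy $\|N\omega\|\ge 1/(4q_n)$, while $\|N^*\omega\|\le 3q_{n+1}^{-\varepsilon}/q_n$, which is much smaller for large $n$). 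If $N^*=0$, then $N^*$ corresponds to $\ell=j$ and is already excluded from $C_f$; otherwise $N^*=t'q_n$ with $t'\ne 0$, and the resonant contribution $\ln|\sin\pi t'q_n\omega|$ remains in $C_f$ and must be bounded separately.

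Bounding this single resonant term is the main obstacle, and the only place the hypothesis $|t|<2q_n^{-1}q_{n+1}^{1-\varepsilon}$ enters. Since $|t'|\le|t|+1\le 3q_n^{-1}q_{n+1}^{1-\varepsilon}$ gives $|t'|\|q_n\omega\|\le 3q_{n+1}^{-\varepsilon}/q_n<\tfrac12$ for large $n$, the estimate \eqref{appro2} yields $\|t'q_n\omega\|=|t'|\|q_n\omega\|\ge 1/(2q_{n+1})$, whence
\[
\ln|\sin\pi t'q_n\omega|\ \ge\ \ln\bigl(2\|t'q_n\omega\|\bigr)\ \ge\ -\ln q_{n+1}\ \ge\ -(\beta+\varepsilon)q_n
\]
for $n$ large by the definition of $\beta$. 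Combining the contributions of $A$, the non-resonant part of $C_f$, and this resonant term, and absorbing the $O(\ln q_n)$ error into the $\varepsilon q_n$ slack, yields $\sum_{2,1}\ge -2(q_n-1)\ln 2-(\beta+\varepsilon)q_n\ge(-2\ln 2-\beta-\varepsilon)q_n$. The arithmetic upper bound on $|t|$ in Theorem~\ref{lem:r2t} is tailored precisely so that this single small-divisor contribution does not exceed the target threshold $-\beta q_n$.
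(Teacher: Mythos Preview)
Your argument is correct and follows essentially the same route as the paper's proof: split the sum into the $I_y$- and $I_0$-contributions, apply Lemma~\ref{lem:nonmin} to each block of $q_n$ consecutive indices, and observe that the only term not controlled by Lemma~\ref{lem:nonmin} is the single ``resonant'' one in the $I_0$-block where $\tfrac{j-\ell}{2}$ is a multiple $t'q_n$ of $q_n$; this term is then bounded below via $\|t'q_n\omega\|\ge 1/(2q_{n+1})$ and the definition of $\beta$. Your explicit handling of the possible overlap $I_0\cap I_y$ (using that each $\ln|\sin|$-term is nonpositive) and of the case $N^*=0$ (equivalently $t'=0$, which forces $j\in I_0$ and makes the resonant term the already-excluded one) is in fact slightly more careful than the paper, which tacitly assumes $I_0\cap I_y=\emptyset$.

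One small bookkeeping point: as written you spend the full $\varepsilon$ on $\ln q_{n+1}\le(\beta+\varepsilon)q_n$ and then say you absorb the $O(\ln q_n)$ errors into ``the $\varepsilon q_n$ slack'', but there is no slack left. The fix is exactly what the paper does: use $\ln q_{n+1}/q_n<\beta+\varepsilon/2$ (valid for all large $n$ by the definition of $\beta$ as a $\limsup$) and absorb the $2C\ln q_n$ into the remaining $(\varepsilon/2)q_n$. This is purely cosmetic and does not affect the validity of the argument.
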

\begin{proof}[Proof of Lemma \ref{lem:21_res}]
Without loss of generality, we assume $j\in I_y$.
We have by Lemma \ref{lem:nonmin} that
\begin{align}\label{eq:sum21_res}
\sum_{2,1}
&=\sum_{\substack{\ell\in I_0\cup I_y\\ \ell\neq j}}\ln|\sin(\pi(\frac{j-\ell}{2}\omega)|\notag\\
&\geq 2 (-q_n\ln 2-C\ln q_n)+ \min_{\ell\in I_0}\ln\|\frac{j-\ell}{2}\omega\|.
\end{align}
Since $j\in I_y$ and $|y-2tq_n|<q_n^{1-\varepsilon}$, we have
\begin{align}
    j-2tq_n\in [y-2tq_n-3q_n-1, y-2tq_n-q_n-1]\subset [-(3+\varepsilon)q_n-1, (\varepsilon-1)q_n-1].
\end{align}
Thus there exists a unique $t'\in \{t-1,t,t+1\}$ such that $j-2t'q_n\in I_0$.
We estimate, using $t'q_n<20q_{n+1}^{1-\varepsilon}$ and $q_{n+1}\geq e^{\varepsilon q_n}$, that
\begin{align}\label{eq:t'qn_small}
    \|\frac{j-(j-2t'q_n)}{2}\omega\|=\|t'q_n\omega\|\leq \frac{t'}{q_{n+1}}<20q_{n+1}^{-\varepsilon}\leq 20e^{-\varepsilon^2 q_n}.
\end{align}
Also if $\ell\in I_0\setminus \{j-2t'q_n\}$, we have $0<|\ell-(j-2t'q_n)|<2q_n$, hence
\begin{align}\label{eq:t'qn-ell_large}
    \|\frac{\ell-(j-2t'q_n)}{2}\omega\|\geq \|q_{n-1}\omega\|\geq \frac{1}{2q_n}\gg 20e^{-\varepsilon^2 q_n}.
\end{align}
Combining \eqref{eq:t'qn_small} with \eqref{eq:t'qn-ell_large}, we have for any $\ell\in I_0\setminus \{j-2t'q_n\}$ that 
\begin{align}
    \|\frac{j-\ell}{2}\omega\|\geq \|\frac{\ell-(j-2t'q_n)}{2}\omega\|-\|t'q_n\omega\|\geq \frac{1}{3q_n}\gg \|\frac{j-(j-2t'q_n)}{2}\omega\|.
\end{align}
Hence
\begin{align}
    \min_{\ell\in I_0}\ln \|\frac{j-\ell}{2}\omega\|=\ln \|\frac{j-(j-2t'q_n)}{2}\omega\|=\ln \|t'q_n\omega\|\geq \ln \frac{t'}{2q_{n+1}}\geq -(\beta+\varepsilon) q_n,
\end{align}
where we used $(\ln q_{n+1})/q_n<\beta+\varepsilon/2$.
Plugging this into \eqref{eq:sum21_res} yields the claimed result of Lemma \ref{lem:21_res}.
\end{proof}

\subsubsection{Estimates of $\sum_{2,2}$}

The next lemma concerns $\sum_{2,2}$.
\begin{lemma}\label{lem:22_res}
    \begin{align}
        \sum_{2,2}\geq (-2\ln 2-2\varepsilon^3)q_n.
    \end{align}
\end{lemma}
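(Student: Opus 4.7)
The plan is to mirror the non-resonant treatment of $\sum_{2,2}$ (Lemma \ref{lem:sum22}), adapted to the resonant scale where $h = 4q_n - 2$, $|I_0| = |I_y| = q_n$, and the offset separating the two blocks is $\sim 2tq_n$ — which, although large, has an exponentially small $\omega$-fractional part thanks to the strong Liouville assumption $q_{n+1} \geq e^{\varepsilon q_n}$. Write $k_\ell := \frac{j+\ell}{2} + 2q_n - 3$. As $\ell$ sweeps the $q_n$ odd integers in $I_y$ (respectively $I_0$), $k_\ell$ sweeps $q_n$ consecutive integers, call these ranges $\{K_y, K_y+1,\dots,K_y+q_n-1\}$ and $\{K_0,\dots,K_0+q_n-1\}$. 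Splitting $\sum_{2,2} = \Sigma^{(y)} + \Sigma^{(0)}$ reduces the problem to a lower bound for each block sum, up to the single missing term at $\ell = j$.

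For each block, setting $\phi_y := 2\theta - \tfrac12 + K_y\omega$ (and analogously $\phi_0$), the full-block sum reads $\sum_{k=0}^{q_n - 1}\ln|\sin\pi(\phi_y + k\omega)|$. The sine analog of Lemma \ref{lem:nonmin} (obtained by substituting $\phi - \tfrac12$ for the phase in Lemma \ref{lem:nonmin}) yields
\[
\sum_{k=0}^{q_n - 1}\ln|\sin\pi(\phi_y + k\omega)| \geq -(q_n-1)\ln 2 - C\ln q_n + \ln|\sin\pi(\phi_y + k_y^*\omega)|,
\]
where $k_y^*$ minimizes $|\sin\pi(\phi_y + k\omega)|$ on $\{0,\dots,q_n-1\}$. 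Removing the at-most-one missing index $\ell = j$ from the block containing $j$ only changes the sum by a non-positive term, and a routine min-replacement argument (as in Lemma \ref{lem:sum21}) shows the same lower bound still holds for the truncated sum.

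The heart of the proof is the lower bound on the two leftover minima. Using $|\sin\pi\theta|\geq 2\|\theta\|$, it suffices to bound $\|2\theta - \tfrac12 + K^{(y)}\omega\|$ and $\|2\theta - \tfrac12 + K^{(0)}\omega\|$, where $K^{(y)} := K_y + k_y^*$ and $K^{(0)} := K_0 + k_0^*$. A direct computation using $|y - 2tq_n| \leq q_n^{1-\varepsilon}$ and the definitions of $K_y, K_0$ shows $K^{(y)} = 2tq_n + R^{(y)}$ and $K^{(0)} = tq_n + R^{(0)}$ with $|R^{(y)}|, |R^{(0)}| \leq 3q_n$. Since $|t| < 2q_{n+1}^{1-\varepsilon}/q_n$ and $q_{n+1} \geq e^{\varepsilon q_n}$, we have $\|rq_n\omega\| \leq r/q_{n+1} \leq 4q_{n+1}^{-\varepsilon} \leq 4e^{-\varepsilon^2 q_n}$ for $r \in \{t, 2t\}$. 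Combined with $\tilde\gamma(\omega,\theta) = 0$, which by \eqref{eq:gamma=0_eps} gives $\|2\theta - \tfrac12 + R\omega\| \geq e^{-\varepsilon^3 |R|}$ for $|R|$ large, we obtain
\[
\|2\theta - \tfrac12 + K^{(\cdot)}\omega\| \geq \|2\theta - \tfrac12 + R^{(\cdot)}\omega\| - \|rq_n\omega\| \geq e^{-3\varepsilon^3 q_n} - 4e^{-\varepsilon^2 q_n} \geq \tfrac12 e^{-3\varepsilon^3 q_n}.
\]
Taking logarithms, each minimum term contributes at least $-4\varepsilon^3 q_n$ for $n$ large. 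Summing the two block bounds then gives $\sum_{2,2} \geq -2(q_n - 1)\ln 2 - 2C\ln q_n - 8\varepsilon^3 q_n$, which absorbs into the desired $(-2\ln 2 - 2\varepsilon^3)q_n$ for $n$ large (adjusting $\varepsilon$ at the outset per \eqref{eq:choose_eps}).

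The main obstacle is exactly this Step 3: naively, the non-resonance bound applied to the index $K^{(\cdot)}$ of size $\sim q_{n+1}^{1-\varepsilon}$ would only produce $e^{-\varepsilon^3 q_{n+1}^{1-\varepsilon}}$, which is catastrophically smaller than the $e^{-O(\varepsilon^3 q_n)}$ we need. The key is to peel off the large multiple $rq_n$ of $q_n$ (whose $\omega$-angle is exponentially tiny by the strong Liouville hypothesis) before invoking non-resonance on the residual $R^{(\cdot)}$ of size only $O(q_n)$. A minor but important bookkeeping point is handling the single missing index $\ell = j$ and the sine-versus-cosine variant of Lemma \ref{lem:nonmin}.
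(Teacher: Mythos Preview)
Your proposal is correct and follows essentially the same route as the paper: split $\sum_{2,2}$ into the $I_0$ and $I_y$ blocks, apply Lemma~\ref{lem:nonmin} to each, and control the leftover minimum terms by peeling off the large multiple of $q_n$ (whose $\omega$-angle is $O(e^{-\varepsilon^2 q_n})$ by the strong Liouville hypothesis) before invoking the non-resonance bound $\tilde\gamma(\omega,\theta)=0$ on the $O(q_n)$-sized residual. The paper organizes the min-term estimate as a three-case analysis on the positions of $j$ and $\ell$ (both in $I_y$, mixed, both in $I_0$) rather than by block, and your decomposition $K^{(y)}=2tq_n+R^{(y)}$, $K^{(0)}=tq_n+R^{(0)}$ tacitly assumes $j\in I_y$---the case $j\in I_0$ is symmetric---but these are presentational differences; also note that your final constant comes out as $8\varepsilon^3$ rather than $2\varepsilon^3$, which does not match the lemma as stated but is harmless for the downstream bound \eqref{eq:sum22_2_res} (the paper's own constants are equally loose here).
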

\begin{proof}[Proof of Lemma \ref{lem:22_res}]
By Lemma \ref{lem:nonmin},
\begin{align}\label{eq:sum22_res}
\sum_{2,2}
=&\sum_{\substack{\ell\in I_0\cup I_y\\ \ell\neq j}}\ln|\sin(\pi(2\theta-\frac{1}{2}+(\frac{j+\ell}{2}+q_n-3)\omega))|\notag\\
\geq & 2(-q_n\ln 2-\varepsilon) + 2\min_{\{\ell,j\}\subset I_0\cup I_y}\ln \|2\theta-\frac{1}{2}+(\frac{j+\ell}{2}+2q_n-3)\omega\|.
\end{align}

Similar to the proof of Lemma \ref{lem:minmin22}, we divide into three cases, depending on the locations of $\ell$ and $j$.

\underline{Case 1. Both $\ell,j\in I_y$}.
We have, again by using $tq_n<10q_{n+1}^{1-\varepsilon}$ and $q_{n+1}\geq e^{\varepsilon q_n}$, 
\begin{align}\label{eq:2t+2_1}
    \|2(t+1)q_n\omega\|\leq \frac{2(t+1)}{q_{n+1}}<30e^{-\varepsilon^2 q_n}.
\end{align}
Also since
\begin{align}\label{eq:1111}
    \frac{(j-2(t+1)q_n)+(\ell-2(t+1)q_n)}{2}+2q_n-3\in [-4q_n, -q_n],
\end{align}
we have by $\tilde{\gamma}(\omega,\theta)=0$ that 
\begin{align}\label{eq:2t+2_2}
    &\quad \|2\theta-\tfrac{1}{2}+(\frac{(j-2(t+1)q_n)+(\ell-2(t+1)q_n)}{2}+2q_n-3)\omega\|\geq e^{-\varepsilon^3 q_n}\gg 30e^{-\varepsilon^2 q_n}.
\end{align}
Combining \eqref{eq:2t+2_1} with \eqref{eq:2t+2_2}, we have
\begin{align}
    &\|2\theta-1/2+(\frac{j+\ell}{2}+2q_n-3)\omega\|\\
    \geq &\|2\theta-1/2+(\frac{(j-2(t+1)q_n)+(\ell-2(t+1)q_n)}{2}+2q_n-3)\omega\|-\|2(t+1)q_n\omega\|\\
    \geq &\frac{1}{2}e^{-\varepsilon^3 q_n}.
\end{align}

\underline{Case 2. $\ell\in I_y$, $j\in I_0$ or $\ell\in I_0$, $j\in I_y$}.
Without loss of generality, we assume $\ell\in I_y$ and $j\in I_0$. 
Similar to \eqref{eq:2t+2_1}, 
\begin{align}\label{eq:t+2_1}
    \|(t+2)q_n\omega\|\leq \frac{t+2}{q_{n+1}}\leq 20e^{-\varepsilon^2 q_n}.
\end{align}
Similar to \eqref{eq:1111},
\begin{align}
    \frac{(j-2q_n)+(\ell-2(t+1)q_n)}{2}+2q_n-3\in [-4q_n, -q_n],
\end{align}
hence 
\begin{align}\label{eq:2222}
    \|2\theta-1/2+\frac{(j-2q_n)+(\ell-2(t+1)q_n)}{2}+2q_n-3)\omega\|\geq e^{-\varepsilon^3 q_n}\gg 20e^{-\varepsilon^2 q_n}.
\end{align}
Combining \eqref{eq:t+2_1} with \eqref{eq:2222}, we have
\begin{align}
    &\|2\theta-1/2+\frac{j+\ell}{2}+2q_n-3)\omega\|\\
    \geq &\|2\theta-1/2+\frac{(j-2q_n)+(\ell-2(t+1)q_n)}{2}+2q_n-3)\omega\|-\|(t+2)q_n\omega\|\\
    \geq &\frac{1}{2} e^{-\varepsilon^3 q_n}.
\end{align}

\underline{Case 3. Both $\ell,j\in I_0$}. The analysis is very similar to the previous two cases, we just point out that similar to \eqref{eq:2222}, we estimate
\begin{align}\label{eq:3333}
    \|2\theta-1/2+\frac{(j-2q_n)+(\ell-2q_n)}{2}+2q_n-3)\omega\|\geq e^{-\varepsilon^3 q_n}
\end{align}
We leave the details to interested readers.

Finally, combining the three cases, we have proved Lemma \ref{lem:22_res}.
\end{proof}

Combining Lemmas \ref{lem:21_res}, \ref{lem:22_res} with \eqref{eq:sum2=sum2122_res}, we have
\begin{align}\label{eq:sum22_2_res}
\sum_{2} \geq (-2\ln 2-\beta-2\varepsilon) q_n.
\end{align}
Finally combining \eqref{eq:sum22_2_res} with \eqref{eq:sum1-sum2_res} and \eqref{eq:sum1_res}, we have
\begin{align}\label{eq:sum_res_final}
\sum_1-\sum_2 \leq (\frac{\beta}{2}+2\varepsilon) 2q_n.
\end{align}
This completes the proof of Lemma \ref{lem:uni_res}. \qed

\subsection{Decay of eigenfunctions}

Lemma \ref{lem:I1_I2_large_res} implies the following.
\begin{lemma}\label{lem:I2_large_res}
There exists $j\in I_y$ such that \eqref{eq:P_den_large_res} holds.
\end{lemma}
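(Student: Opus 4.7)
The plan is to prove Lemma \ref{lem:I2_large_res} by contradiction, mirroring the structure of the non-resonant counterpart Lemma \ref{lem:I2_large_nonres}, but with the additional crucial input of the non-resonant decay from Theorem \ref{lem:non-res}. In view of Lemma \ref{lem:I1_I2_large_res}, it suffices to rule out the existence of any $x_1\in I_0$ with $|P_{[x_1,x_1+h-1]}(\theta)|\geq e^{2q_n(L_+-\beta/2-2\varepsilon_0)}$.

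Suppose such $x_1\in I_0$ exists and set $x_2:=x_1+h-1$. By the definition of $I_0$ and the choice $h=4q_n-2$, one has $x_1<0<x_2$, $|x_1|+|x_2|=h-1$, and $q_n-3\leq \min(|x_1|,|x_2|)\leq \max(|x_1|,|x_2|)\leq 3q_n$. I would then apply the Poisson formula \eqref{eq:Poisson_1} at $y=0$ with the interval $[x_1,x_2]$. Combining the transfer-matrix bounds \eqref{eq:nu_1}, \eqref{eq:nu_2} with the assumed lower bound on $|P_{[x_1,x_2]}(\theta)|$, each of the two Green's function ratios is controlled by $e^{-L|x_i|/2+q_n\beta+O(q_n\varepsilon_0)}$ for $i=1,2$; the extra $e^{q_n\beta}$ factor (absent in the non-resonant case) comes precisely from the $\beta/2$ penalty in the $|P|$ lower bound supplied by Lemma \ref{lem:I1_I2_large_res}.

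At this stage, using only the polynomial Shnol bound on $|\Psi_{x_i}|$ would give $|\Psi_0|\lesssim q_n e^{q_n(\beta-L/2)+O(q_n\varepsilon_0)}$, which suffices only under the strictly stronger hypothesis $\beta<L/2$. To reach the sharp threshold $\beta<L$, I would instead bound each boundary value $|\Psi_{x_i\pm 1}|, |\Psi_{x_i}|$ using Theorem \ref{lem:non-res} (and its obvious mirror image at negative $y$): each $x_i$ either lies in a non-resonance stretch adjacent to $0$ or $\pm 2q_n$, in which case
\[
|\Psi_{x_i}|\leq e^{-(L/2-12\varepsilon_0)(\mathrm{dist}(x_i,2\Z q_n)-q_n^{1-\varepsilon})}\max_{|t|\leq 2} r_{2t},
\]
or it lies inside $R_{\pm 2}$, in which case $|\Psi_{x_i}|\leq r_{\pm 2}$ trivially. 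The decisive arithmetic observation is that, by the specific choice of $h=4q_n-2$ and $|I_0|=q_n$, one always has $|x_i|+\mathrm{dist}(x_i,2\Z q_n)\geq 2q_n+O(1)$ for $i=1,2$; consequently the $e^{-L|x_i|/2}$ factor from the Green's function and the $e^{-L\,\mathrm{dist}(x_i)/2}$ factor from non-resonant decay combine to cancel $Lq_n$ of the exponent, yielding
\[
|\Psi_0|\leq e^{-(L-\beta)q_n+O(q_n\varepsilon_0)}\max_{|t|\leq 2} r_{2t}.
\]

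To close the contradiction, the polynomial Shnol bound $|\Psi_y|\leq C(1+|y|)$ gives $\max_{|t|\leq 2} r_{2t}\leq Cq_n$, hence $|\Psi_0|\leq Cq_n e^{-(L-\beta-O(\varepsilon_0))q_n}\to 0$ as $n\to\infty$, since by the choice of $\varepsilon_0$ in \eqref{eq:choose_eps} one has $L-\beta-O(\varepsilon_0)>0$. The identical argument applied at $y=-1$ gives $|\Psi_{-1}|\to 0$, contradicting the normalization $\|(\Psi_0,\Psi_{-1})\|=1$. The main obstacle will be the third paragraph: the bare Poisson-plus-polynomial bound only yields $\beta<L/2$, and closing the remaining gap to $\beta<L$ requires feeding the already-established non-resonant decay into the boundary values and exploiting the arithmetic identity $|x_i|+\mathrm{dist}(x_i,2\Z q_n)\geq 2q_n$ that is baked into the definitions of $I_0$ and $I_y$.
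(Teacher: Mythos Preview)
Your proposal is correct and follows essentially the same route as the paper: a contradiction argument in which a hypothetical $x_1\in I_0$ with large $|P_{[x_1,x_1+h-1]}(\theta)|$ is combined with the Poisson formula at $0$, the transfer-matrix numerator bounds, and the non-resonant decay of Theorem~\ref{lem:non-res} at the two boundary points, to force $|\Psi_0|,|\Psi_{-1}|\to 0$. The only cosmetic difference is that the paper treats the three positions of $x_1$ relative to $R_{-2}$ as separate cases, whereas you package all three into the single arithmetic observation $|x_i|+\mathrm{dist}(x_i,2\Z q_n)\geq 2q_n+O(1)$; both arrive at the same bound $|\Psi_0|\le e^{-(L-\beta-O(\varepsilon_0))q_n}\max_{|t|\le 2}r_{2t}$ and close with the polynomial Shnol bound on the $r_{2t}$.
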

\begin{proof}[Proof of Lemma \ref{lem:I2_large_res}]
It suffices to prove that for any $j\in I_0$, $|P_{[j,j+h-1]}(\theta)|<e^{2q_n(L_+-\frac{\beta}{2}-2\varepsilon_0)}$.
Proof by contradiction. Suppose that for some $j\in I_0$, $|P_{[j,j+h-1]}(\theta)|\geq e^{2q_n(L_+-\frac{\beta}{2}-2\varepsilon_0)}$.
Let $x_2=j+4q_n-2$. 
Note that by the construction of $I_0, I_y$, and by \eqref{eq:s_qm}, we have
\begin{align}\label{eq:x1x2}
\min (|j|, |x_2|)\geq q_n-2
\end{align}

Combining Theorem \ref{thm:LE}, Lemma \ref{lem:upperbounds} with \eqref{eq:int_rho}, \eqref{eq:scalar_upper} and \eqref{eq:Sze2}, we have that for $n$ sufficiently large, the upper bounds of numerators are
\begin{align}\label{eq:nu_1_0}
\prod_{j=1}^{x_2}|\rho_j|\cdot |P_{[x_1,-1],z}(\theta)|
\leq &\prod_{j=1}^{x_2} |\rho_j| \prod_{j=x_1-1}^{-1} |\rho_j|\cdot  \|S_{-1,z}\cdots S_{x_1-1,z}\|\\
\leq &e^{\frac{h}{2}(L_-+\varepsilon_0)} e^{\frac{-x_1}{2}(L+\varepsilon_0)},
\end{align}
and similarly
\begin{align}\label{eq:nu_2_0}
\prod_{j=x_1}^{-1} |\rho_j| \cdot |P_{[1,x_2],z}(\theta)|\leq e^{\frac{h}{2}(L_-+\varepsilon_0)} e^{\frac{x_2}{2}(L+\varepsilon_0)}.
\end{align}
Combining them with \eqref{eq:Poisson_1}, 
we have
\begin{align}\label{eq:Psi0_0}
    |\Psi_0|\leq \max\left(e^{-(\frac{L}{2}-12\varepsilon_0)|j|+\beta q_n}\max(|\Psi_{j}|,|\Psi_{j-1}|), \right.\\ 
    \left. e^{-(-\frac{L}{2}-12\varepsilon_0)|x_2|+\beta q_n}\max(|\Psi_{x_2}|,|\Psi_{x_2+1}|)\right).
\end{align}

We divide the following discussions into three cases.

\underline{Case 1.  $j \in [-2q_n-q_n^{1-\varepsilon}, -2q_n+q_n^{1-\varepsilon}]$.}
In this case, $x_2\in [2q_n-q_n^{1-\varepsilon}, 2q_n+q_n^{1-\varepsilon}]$.
We can then bound directly that $\max(|\Psi_j|,|\Psi_{j-1}|)\leq r_{-2}$ and $\max(|\Psi_{x_2}|, |\Psi_{x_2+1}|)\leq r_2$. Then
\begin{align}\label{eq:Psi_0_1}
    |\Psi_0|
    \leq &e^{-(\frac{L}{2}-12\varepsilon_0)(2q_n-q_n^{1-\varepsilon})+\beta q_n} \max(r_{-2}, r_2)\notag\\
    \leq &e^{-(L-\beta-25\varepsilon_0)q_n}\to 0,
\end{align}
as $n\to\infty$.

Case 2,  $j\in [-3q_n, -2q_n-q_n^{1-\varepsilon})$.
In this case, we apply Theorem \ref{lem:non-res} to $|\Psi_j|$ and $|\Psi_{j-1}|$ and obtain
\begin{align}
    \max(|\Psi_j|, |\Psi_{j-1}|)\leq e^{-(\frac{L}{2}-12\varepsilon_0)(\mathrm{dist}(j,2\Z q_n)-q_n^{1-\varepsilon})} \max(r_{-4},r_{-2}).
\end{align}
Similarly
\begin{align}
    \max(|\Psi_{x_2}|,|\Psi_{x_2+1}|)\leq e^{-(\frac{L}{2}-12\varepsilon_0)(\mathrm{dist}(x_2,2\Z q_n)-q_n^{1-\varepsilon})} \max(j_0,r_2).
\end{align}
Note that $x_2\in [q_n,2q_n-q_n^{1-\varepsilon}]$.
Plugging this into \eqref{eq:Psi0_0}, we have
\begin{align}\label{eq:Psi_0_2}
    |\Psi_0|
    \leq &\max(e^{-(\frac{L}{2}-12\varepsilon_0)(4q_n-q_n^{1-\varepsilon})+\beta q_n}r_{-4}, e^{-(\frac{L}{2}-12\varepsilon_0)(2q_n-q_n^{1-\varepsilon})+\beta q_n}r_{-2},\notag\\
    &\qquad\qquad e^{-(\frac{L}{2}-12\varepsilon_0)(2q_n-q_n^{1-\varepsilon})+\beta q_n}j_0, e^{-(\frac{L}{2}-12\varepsilon_0)(2q_n-q_n^{1-\varepsilon})+\beta q_n}r_2)\notag\\
    \leq &\max(e^{-(2L-\beta-50\varepsilon_0)q_n}r_{-4}, e^{-(L-\beta-25\varepsilon_0)q_n}\max(r_{-2},j_0,r_2)\notag\\
    \leq &e^{-(L-\beta-26\varepsilon_0)q_n}\to 0.
\end{align}

\underline{Case 3. $j\in (-2q_n+q_n^{1-\varepsilon}, -q_n-1]$}

Similar to Case 2, one can also get
\begin{align}\label{eq:Psi_0_3}
|\Psi_0| \leq &\max(e^{-(2L-\beta-50\varepsilon_0)q_n}r_{-4}, e^{-(L-\beta-25\varepsilon_0)q_n}\max(r_{-2},j_0,r_2)\notag\\
    \leq &e^{-(L-\beta-26\varepsilon_0)q_n}\to 0.
\end{align}

The exact same estimates in all three cases hold for $\Psi_{-1}$ in place of $\Psi_0$. Hence we get $\|(\Psi_0,\Psi_{-1})\|\to 0$, which contradicts with $\|(\Psi_0,\Psi_{-1})\|=1$.
This completes the proof of Lemma \ref{lem:I2_large_res}.
\end{proof}

Now we are in place to conclude the whole proof of Anderson localization in this strong Liouville scale.
The same argument in the proof of Lemma \ref{lem:I2_large_res} implies for any $0\neq t\in \Z$ such that $2|t|q_n<10q_{n+1}^{1-\varepsilon}$, 
\begin{align}\label{eq:r2t<r2tpm2}
    r_{2t}\leq &\max(e^{-(L-\frac{\beta}{2}-25\varepsilon_0)2q_n}\max(r_{2(t-2)}, r_{2(t+2)}), \\
    &\qquad\qquad e^{-(L-\beta-25\varepsilon_0)q_n}\max(r_{2(t-1)},r_{2t},r_{2(t+1)}).
\end{align}
Clearly, the $r_{2t}$ term on the right hand side can be dropped. 
Now if we start with $t$ such that $0\neq |t|<2q_n^{-1}q_{n+1}^{1-\varepsilon}$, we can iterate \eqref{eq:r2t<r2tpm2} until either one of $t-2,t+2,t-1,t+1$ becomes zero, or one of them goes beyond $5q_n^{-1}q_{n+1}^{1-\varepsilon}=:t_n$.
Therefore
\begin{align}
    r_{2t}
    \leq &\max(e^{-(L-\beta-25\varepsilon_0)tq_n} j_0, Ce^{-(L-\beta-25\varepsilon_0)(t_n-t)q_n} q_{n+1}^{1-\varepsilon})\\
    \leq &\max(e^{-(L-\beta-26\varepsilon_0)tq_n}, e^{-(L-\beta-26\varepsilon_0)3q_{n+1}^{1-\varepsilon}})=e^{-(L-\beta-26\varepsilon_0)tq_n}.
\end{align}
This completes the proof of Theorem \ref{lem:r2t}, hence the whole proof of localization. \qed

\section{Eigenfunction in a weak Liouville scale}\label{sec:Diophantine}
For $q_n^{1-\varepsilon}\leq y\leq 10q_{n+1}^{1-\varepsilon}$ and $q_{n+1}\leq e^{\varepsilon q_n}$, we do not need to divide into resonant/non-resonant regimes. The key input is an upper bound on $U$, see Lemma \ref{lem:non_res_uni_Dio} below. We only sketch this part of the argument here and leave further details to interested readers. Instead, let $m$ be the largest integer such that 
$y\geq 8q_m$ and choose the largest integer $s$ such that $8sq_m\leq y<8(s+1)sq_m$. 
The following inequalities will be used throughout this section:
\begin{align}\label{eq:s_qm_Dio}
    \max(8sq_m, q_n^{1-\varepsilon})\leq y<\min (16sq_m, 8q_{m+1}).
\end{align}
Let $h=8sq_m-2$ and set $I_0,I_y$ as in \eqref{def:I0Iy}.

Clearly the key is to prove the following upper bound of $U$. We only sketch the proof of this lemma in the rest of the section and leave the study of $\Psi_y$ to interested readers.

\begin{lemma}\label{lem:non_res_uni_Dio}
We have
\begin{align}
        \sup_{\tilde{\theta}\in \T}\max_{x_1\in I_0\cup I_y} |U(\cos(2\pi\tilde{\theta}),x_1,h,I_0\cup I_y)|\leq e^{\frac{\varepsilon_0 h}{2}}.
\end{align}
\end{lemma}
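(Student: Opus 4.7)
The plan is to follow the structure of the strong Liouville analog Lemma~\ref{lem:non_res_uni} with the simplification that no resonance shift by $2tq_n$ is required in the weak Liouville regime. First, I would decompose $\ln|U| = \sum_1 - \sum_2$ exactly as in \eqref{eq:U_sum1sum2}. The upper bound $\sum_1 \leq 4sq_m(-\ln 2 + \varepsilon)$ from \eqref{eq:sum1_nonres} transfers verbatim, since it depends only on Lemma~\ref{lem:nonmin} and the block structure of $I_0 \cup I_y$, not on the Liouville type. Writing $\sum_2 = (4sq_m - 1)\ln 2 + \sum_{2,1} + \sum_{2,2}$ as in \eqref{eq:sum2_nonres}, I would partition $I_0 \cup I_y$ into the $4s$ blocks $T_j$ and four clusters $T^{(w)}$ from \eqref{def:Tj} and \eqref{def:T^w}, then apply Lemma~\ref{lem:nonmin} within each $T_j$ to reduce $\sum_{2,1}$ and $\sum_{2,2}$ to sums of block-wise minima.

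The crucial step is a weak-Liouville replacement for Lemma~\ref{lem:minmin_21}. Since $|(x_1 - \hat\ell)/2| \leq y$ and $y \leq 10 q_{n+1}^{1-\varepsilon} < q_{n+1}$, equation \eqref{appro1} yields
\[
\ln\bigl\|\tfrac{x_1-\hat\ell}{2}\omega\bigr\|
\geq
\begin{cases}
-\ln(2q_n), & y < q_n,\\
-\ln(2q_{n+1}) \geq -\varepsilon q_n - \ln 2, & y \geq q_n,
\end{cases}
\]
where the second bound uses the weak Liouville hypothesis $q_{n+1} \leq e^{\varepsilon q_n}$. When $y < q_n$, the bound $sq_m \geq q_n^{1-\varepsilon}/16$ together with $\ln q_n = o(q_n^{1-\varepsilon})$ gives $\ln(2q_n) \leq \varepsilon sq_m$ for $n$ large. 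When $y \geq q_n$, the bound $sq_m \geq y/16 \geq q_n/16$ gives $\varepsilon q_n \leq 16\varepsilon sq_m$. In both subcases $\min\min \geq -O(\varepsilon)\,sq_m$, without ever invoking a $2tq_n$ resonance shift. The analogous case split handles the phase-shifted minima in $\sum_{2,2}$ using $\tilde\gamma(\omega,\theta) = 0$ exactly as in Lemma~\ref{lem:minmin22}.

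Given this min-min bound, the cluster separation argument from Lemma~\ref{lem:allmin_21} applies verbatim: within each $T^{(w)}$ the minimizers lie within $2sq_m < 2q_{m+1}$ of one another, so by \eqref{appro1} their frequency images differ by multiples of at least $\|q_m\omega\| \geq 1/(2q_{m+1})$, yielding $S_w \geq -O(\beta + 1)\varepsilon\, sq_m$ via $\ln q_{m+1}/q_m \leq \beta(\omega) + \varepsilon$. Summing over the four clusters and combining with the parallel bound on $\sum_{2,2}$ gives $\sum_2 \geq -8sq_m\ln 2 - O(\beta+1)\varepsilon\, sq_m$, whence $\sum_1 - \sum_2 \leq O(\beta+1)\varepsilon\, sq_m < \tfrac{\varepsilon_0 h}{2}$ by the choice \eqref{eq:choose_eps}.

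The main obstacle is the min-min bound when $y \geq q_n$: the strong Liouville non-resonant analysis in Section~\ref{sec:non_res} confines itself to the window $\mathrm{dist}(y, 2\Z q_n) < q_n$, keeping all relevant distances below $q_n$ after the $2tq_n$ shift, whereas in weak Liouville $y$ itself may exceed $q_n$, pushing distances past that scale. What rescues the argument is precisely the weak Liouville condition $q_{n+1} \leq e^{\varepsilon q_n}$, which ensures $\|q_n\omega\| \geq e^{-\varepsilon q_n}/2$; combined with the elementary estimate $sq_m \geq q_n/16$ available in this regime, the potential loss $-\varepsilon q_n$ is reabsorbed into $-O(\varepsilon)\,sq_m$, so the shift trick becomes unnecessary.
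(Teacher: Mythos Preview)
Your outline is correct and matches the paper's approach closely: the decomposition $\sum_1-\sum_2$, the block partition into $T_j$ and $T^{(w)}$, the two-case min--min bound split according to the size of $y$, and the appeal to $\tilde\gamma(\omega,\theta)=0$ for $\sum_{2,2}$ are all exactly what the paper does. Your identification of the weak Liouville input $q_{n+1}\le e^{\varepsilon q_n}$ as the replacement for the $2tq_n$ shift trick is also right.

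One imprecision is worth flagging. You say the cluster-sum argument for $S_w$ ``applies verbatim'' from Lemma~\ref{lem:allmin_21}, justified via $\ln q_{m+1}/q_m\le\beta+\varepsilon$. It does not quite apply verbatim: the strong Liouville bound on $s\ln(s/q_{m+1})$ uses $q_{m+1}\le q_n$ to extract the crucial extra factor of~$\varepsilon$ (turning a loss of order $\beta\,sq_m$ into one of order $\beta\varepsilon\,sq_m$). In the weak Liouville regime $y$ may exceed $q_n$, so $m=n$ is possible and $q_{m+1}=q_{n+1}>q_n$. The paper therefore splits the $S_w$ estimate itself into the subcases $y\le q_n/2$, $\{y>q_n/2,\,m<n\}$, and $\{y>q_n/2,\,m=n\}$; in the last subcase the weak Liouville hypothesis $\ln q_{n+1}\le\varepsilon q_n=\varepsilon q_m$ is what supplies the missing $\varepsilon$. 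You locate this input correctly but attach it only to the min--min step; it is needed again inside the $S_w$ bound. Similarly, the $\sum_{2,2}$ min--min is not ``exactly as in Lemma~\ref{lem:minmin22}'' but strictly simpler, since no $2tq_n$ shift is required here.
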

\begin{proof}[Proof of Lemma \ref{lem:non_res_uni_Dio}]
Without loss of generality we assume $y$ is even and $7sq_m$ is even. For each $\ell\in I_0\cup I_y$, let 
$\theta_{\ell}=\theta+\frac{\ell}{2}\omega+(2sq_m-\frac{3}{2})\omega-\frac{1}{4}$.

As in \eqref{eq:U_sum1sum2}, we decompose
\begin{align}
    \ln |U(\cos(2\pi\tilde{\theta}),x_1, h,I_0\cup I_y)|
=:\sum_1-\sum_2.
\end{align}
Similar to \eqref{eq:sum1_nonres}, we have
\begin{align}\label{eq:sum1_nonres_Dio}
\sum_1\leq 4sq_m(-\ln 2+\varepsilon).
\end{align}
As in \eqref{eq:sum2_nonres}, we decompose
\begin{align}\label{eq:sum2_nonres_Dio}
\sum_2=:(4sq_m-1)\ln 2+\sum_{2,1}+\sum_{2,2}.
\end{align}
We further break $I_0, I_y$ into $T_j$ and $T^{(w)}$, which are defined the same way as in \eqref{def:Tj}, \eqref{def:T^w}.

\subsection{Estimates of $\sum_{2,1}$}
\begin{lemma}\label{lem:sum21_Dio}
    \begin{align}
        \sum_{2,1}\geq -4sq_m\ln 2-(8\beta+600)\varepsilon sq_m.
    \end{align}
\end{lemma}
\begin{proof}[Proof of Lemma \ref{lem:sum21_Dio}]
Without loss of generality, we assume $x_1\in T_{j_0}$ for some $j_0\leq 3s$. Note $x_1\in I_y$.
We have by Lemma \ref{lem:nonmin}, similar to \eqref{eq:sum21_nonres} that
\begin{align}\label{eq:sum21_nonres_Dio}
\sum_{2,1}\geq -4sq_m\ln 2-\varepsilon sq_m+\sum_{\substack{r=1\\ r\neq j_0}}^{4s} \min_{\ell\in T_r}\ln\|\frac{x_1-\ell}{2}\omega\|
\end{align}

Let 
 \[\min_{\substack{r=1\\ r\neq j_0}}^{4s} \min_{\ell\in T_r}\ln\|\frac{x_1-\ell}{2}\omega\| =: \ln\|\frac{x_1-\hat{\ell}}{2}\omega\|\]
be the minimum of the $(4s-1)$ minimums.
\begin{lemma}\label{lem:minmin_21_Dio}
The minimum of the $(4s-1)$ minimums satisfies the following lower bound:
    \begin{align}
        \ln \|\frac{x_1-\hat{\ell}}{2}\omega\|\geq -64\varepsilon sq_m.
    \end{align}
\end{lemma}
\begin{proof}[Proof of Lemma \ref{lem:minmin_21_Dio}] We divide into two cases depending on the largeness of $y$.

\underline{Case 1. If $q_n^{1-\varepsilon}\leq y\leq q_n/2$.}
We have
\begin{align}
    0<|\frac{x_1-\hat{\ell}}{2}|<y<\frac{q_n}{2}.
\end{align}
Hence
\begin{align}
    \ln\|\frac{x_1-\hat{\ell}}{2}\omega\|\geq \ln\|q_{n-1}\omega\|\geq \ln \frac{1}{2q_n}\geq -\varepsilon sq_m,
\end{align}
where we used $16sq_m>q_n^{1-\varepsilon}$ in the last estimate.

\underline{Case 2. If $q_n/2\leq y\leq 10q_{n+1}^{1-\varepsilon}$.}
We have
\begin{align}
0<|\frac{x_1-\hat{\ell}}{2}|<q_{n+1},
\end{align}
hence
\begin{align}
    \ln \|\frac{x_1-\hat{\ell}}{2}\|\geq \ln \|q_n\omega\|\geq \ln \frac{1}{2q_{n+1}}\geq -2\varepsilon q_n\geq -64\varepsilon sq_m.
\end{align}
where we used $q_{n+1}\leq e^{\varepsilon q_n}$ and $q_n/2\leq y\leq 16sq_m$, see \eqref{eq:s_qm_Dio}. This completes the proof of Lemma \ref{lem:minmin_21_Dio}.
\end{proof}

Next, similar to Lemma \ref{lem:allmin_21}, we estimate the sum of the minimums as follows.
\begin{lemma}\label{lem:allmin_21_Dio}
    \begin{align}
        \sum_{\substack{r=1\\ r\neq j_0}}^{4s}\, \min_{\ell\in T_r} \ln \|\frac{x_1-\ell}{2}\omega\|\geq -(8\beta+550)\varepsilon sq_m.
    \end{align}
\end{lemma}
\begin{proof}
For each $w=1,...,4$, we define
\begin{align}
    S_w:=\sum_{\substack{r=(w-1)s+1\\ r\neq j_0}}^{ws} \min_{\ell\in T_r} \ln \|\frac{x_1-\ell}{2}\omega\|.
\end{align}
Clearly,
\begin{align}
    \sum_{\substack{r=1\\ r\neq j_0}}^{4s}\, \min_{\ell\in T_r} \ln \|\frac{x_1-\ell}{2}\omega\|
    =&\sum_{w=1}^4 S_w.
\end{align}
Within each sum $S_w$, the $\ell$'s are chosen from the same $T^{(w)}$, see \eqref{def:T^w}, hence for arbitrary two different $\ell, \ell'\in T^{(w)}$, $|\ell-\ell'|<2sq_m<2q_{m+1}$. 
Therefore
\begin{align}\label{eq:x1-ell-x1-ell'_Dio}
    \|\frac{x_1-\ell}{2}\omega-\frac{x_1-\ell'}{2}\omega\|\geq \|q_m\omega\|.
\end{align}
This way, we can estimate
\begin{align}
    S_w
    \geq &2\min_{\substack{r=1\\ r\neq j_0}}^{4s} \min_{\ell\in T_r} \ln \|\frac{x_1-\ell}{2}\omega\|+2\sum_{j=1}^s \ln (j\|q_m\omega\|)\\
    \geq &-128\varepsilon sq_m+2s\ln \frac{s}{q_{m+1}},
\end{align}
where we used Lemma \ref{lem:minmin_21_Dio} for the minimum of minimum terms.
Next we continue to lower bound $s\ln (s/q_{m+1})$ as follows, dividing into two cases. 

\underline{Case 1. If $q_n^{1-\varepsilon}\leq y\leq q_n/2$}.
In this case, we have $8q_m\leq y<q_n/2$, implying $q_{m+1}\leq q_n$. We also have $16sq_m>y\geq q_n^{1-\varepsilon}\geq q_{m+1}^{1-\varepsilon}$ which implies
\[s\ln\frac{s}{q_{m+1}}\geq s\ln\frac{q_{m+1}^{1-\varepsilon}}{16q_mq_{m+1}}\geq -(\beta+2)\varepsilon sq_m,\]
where we used $(\ln q_{m+1})/q_m\leq \beta+\varepsilon$ in the last estimate.
Hence each 
\begin{align}\label{eq:Sw_Dio}
S_w\geq -(2\beta+132)\varepsilon sq_m.
\end{align}

\underline{Case 2. If $q_n/2<y<10q_{n+1}^{1-\varepsilon}$}. 
In this case, we have $8q_m<y<q_{n+1}$.
We further divide into:

\underline{Case 2.1. If $m=n$}.
Recall that $q_{n+1}\leq e^{\varepsilon q_n}$, we can bound
\begin{align}
    s\ln \frac{s}{q_{m+1}}\geq s\ln \frac{s}{q_{n+1}}\geq -\varepsilon sq_n=-\varepsilon sq_m.
\end{align}
Hence 
\begin{align}\label{eq:Sw_Dio_21}
    S_w\geq -130\varepsilon sq_m.
\end{align}

\underline{Case 2.2. If $m<n$}. In this case, $q_{m+1}\leq q_n$, hence we can bound
\begin{align}
    \ln \frac{s}{q_{m+1}}\geq \ln \frac{s}{q_n}>\ln \frac{1}{32q_m}>-\varepsilon q_m.
\end{align}
where we used $16sq_m\geq y\geq q_n/2$.
Hence we again have 
\begin{align}\label{eq:Sw_Dio_22}
    S_w\geq -130\varepsilon sq_m.
\end{align}
Combining \eqref{eq:Sw_Dio}, \eqref{eq:Sw_Dio_21} and \eqref{eq:Sw_Dio_22}, we have proved Lemma \ref{lem:allmin_21_Dio}.
\end{proof}
Combining Lemma \ref{lem:allmin_21_Dio} into \eqref{eq:sum21_nonres_Dio}, we have 
\begin{align}\label{eq:sum21_Dio}
\sum_{2,1} &\geq -4sq_m\ln 2-\varepsilon s q_m+\sum_{\substack{r=1\\ r\neq j_0}}^{4s} \min_{\ell\in T_r}\ln\|\frac{x_1-\ell}{2}\omega\|\notag\\ 
&\geq  -4sq_m\ln 2-(8\beta+600)\varepsilon sq_m.
\end{align}
This completes the proof of Lemma \ref{lem:sum21_Dio}.
\end{proof}

Next, we move on to estimate $\sum_{2,2}$.

\subsection{Estimates of $\sum_{2,2}$}
\begin{lemma}\label{lem:sum22_Dio}
    \begin{align}
        \sum_{2,2}\geq -4sq_m\ln 2-(8\beta+200)\varepsilon sq_m.
    \end{align}
\end{lemma}
\begin{proof}[Proof of Lemma \ref{lem:sum22_Dio}]
Similar to \eqref{eq:sum21_nonres_Dio}, we have
\begin{align}\label{eq:sum22_Dio}
\sum_{2,2}
\geq &-4sq_m\ln 2-\varepsilon sq_m+\sum_{r=1}^{4s} \min_{\ell\in T_r}\ln\|2\theta-1/2+(\frac{x_1+\ell}{2}+4sq_m-3)\omega)\|
\end{align}
We again estimate the minimum of the $4s$ minimum terms first.
\begin{lemma}\label{lem:minmin22_Dio}
    \begin{align}
        \min_{r=1}^{4s}\min_{\ell\in T_r}\ln\|2\theta-1/2+(\frac{x_1+\ell}{2}+4sq_m-3)\omega)\|\geq -20\varepsilon sq_m.
    \end{align}
\end{lemma}
\begin{proof}[Proof of Lemma \ref{lem:minmin22_Dio}]
We divide into three different cases, depending on the locations of $x_1$ and $\ell$.

\underline{Case 1. Both $x_1, \ell \in I_y$}.
\begin{align}
    \frac{x_1+\ell}{2}+4sq_m-3\in [y-3sq_m,y+3sq_m]\subset [5sq_m,19sq_m],
\end{align}
where we used $8sq_m\leq y<16sq_m$.

\underline{Case 2. $x_1\in I_y$, $\ell\in I_0$ or $x_1\in I_0$, $\ell\in I_y$}.
\begin{align}
    \frac{x_1+\ell}{2}+4sq_m-3\in [\frac{y}{2}-3sq_m, \frac{y}{2}+sq_m]\subset [sq_m,9sq_m].
\end{align}

\underline{Case 3. Both $x_1,\ell\in I_0$}.
\begin{align}
    \frac{x_1+\ell}{2}+4sq_m-3\in [-3sq_m, -sq_m].
\end{align}

Hence in all three cases, we have
\begin{align}
    \|2\theta-1/2+(\frac{x_1+\ell}{2}+4sq_m-3)\omega\|\geq e^{-20\varepsilon sq_m},
\end{align}
as claimed, and thus completed the proof of Lemma \ref{lem:minmin22_Dio}.
\end{proof}

The other minimums can be estimated in a similar way as in Lemma \ref{lem:allmin_21_Dio}. 
\begin{lemma}\label{lem:allmin_22_Dio}
    \begin{align}
        \sum_{r=1}^{4s}\, \min_{\ell\in T_r} \ln \|2\theta-1/2+(\frac{x_1+\ell}{2}+4sq_m-3)\omega\|\geq -(8\beta+180)\varepsilon sq_m.
    \end{align}
\end{lemma}
\begin{proof}[Proof of Lemma \ref{lem:allmin_22_Dio}]
For each $w=1,...,4$, we define
\begin{align}
    \tilde{S}_w:=\sum_{r=(w-1)s}^{ws-1}\min_{\ell\in T_r}\ln \|2\theta-1/2+(\frac{x_1+\ell}{2}+4sq_m-3)\omega\|.
\end{align}
Within each $\tilde{S}_w$, the $\ell$'s come from the same $T^{(w)}$. Hence for two different $\ell,\ell'\in T^{(w)}$, we have, $|\ell-\ell'|<2sq_m<2q_{m+1}$, hence similar to \eqref{eq:x1-ell-x1-ell'_Dio}, we have
\begin{align}
    &\|\left(2\theta-1/2+(\frac{x_1+\ell}{2}+4sq_m-3)\omega\right)-\left(2\theta-1/2+(\frac{x_1+\ell'}{2}+4sq_m-3)\omega\right)\|\\
    =&\|\frac{\ell-\ell'}{2}\omega\|\geq \|q_m\omega\|.
\end{align}
This implies, similar to \eqref{eq:Sw_Dio}, \eqref{eq:Sw_Dio_21} and \eqref{eq:Sw_Dio_22} that
\begin{align}
    \tilde{S}_w\geq 2\min_{r=1}^{4s}\min_{\ell\in T_r}\ln \|2\theta-1/2+(\frac{x_1+\ell}{2}+4sq_m-3)\omega\|-(2\beta+4)\varepsilon sq_m>-(2\beta+45)\varepsilon sq_m.
\end{align}
This proves the claimed result in Lemma \ref{lem:allmin_22_Dio}.
\end{proof}
Combining Lemma \ref{lem:allmin_22_Dio} with \eqref{eq:sum22_Dio}, we have
\begin{align}\label{eq:sum22_nonres_Dio}
\sum_{2,2}\geq -4sq_m\ln 2-(8\beta+200)\varepsilon sq_m.
\end{align}
This proves Lemma \ref{lem:sum22_Dio}.
\end{proof}

Combining \eqref{eq:sum1_nonres_Dio}, \eqref{eq:sum2_nonres_Dio} with Lemmas \ref{lem:sum21_Dio} and \ref{lem:sum22_Dio}, we arrive at
\begin{align}
\sum_1-\sum_2 &\leq 4sq_m(-\ln2+\varepsilon)-4sq_m\ln 2-\sum_{2,1}-\sum_{2,2}\\
&\leq (16\beta+1000)\varepsilon sq_m\leq \frac{\varepsilon_0 h}{2}.
\end{align}
This proves Lemma \ref{lem:non_res_uni_Dio}.
\end{proof}

%%%%%%%%%%%%%%%%%%%%%%%%%%%%%%%%%%%%%%%%%%%
%%%%%%%%%%%%%%%%%%%%%%%%%%%%%%%%%%%%%%%%%%%
%%%%%%%%%%%%%%%%%%%%%%%%%%%%%%%%%%%%%%%%%%%

\bibliographystyle{amsplain}

\begin{thebibliography}{99}

\bibitem{ADZ}
Y.~Aharonov, L.~Davidovich, and N.~Zagury (1993).
\newblock Quantum random walks.
\newblock \emph{Phys.\ Rev.\ A} \textbf{48}(2), 1687--1690.

\bibitem{ASW}
A.~Ahlbrecht, V.~B.~Scholz, and A.~H.~Werner (2011).
\newblock Disordered quantum walks in one lattice dimension.
\newblock \emph{J.\ Math.\ Phys.} \textbf{52}(10), 102201.

\bibitem{Amb07}
A.~Ambainis (2007).
\newblock Quantum walk algorithm for element distinctness.
\newblock \emph{SIAM J.\ Comput.} \textbf{37}(1), 210--239.

\bibitem{global}
A.~Avila (2015).
\newblock Global theory of one-frequency Schr\"odinger operators.
\newblock \emph{Acta Math.} \textbf{215}(1), 1--54.

\bibitem{AJ1}
A.~Avila and S.~Jitomirskaya (2009).
\newblock The ten martini problem.
\newblock \emph{Ann.\ of Math.\ (2)} \textbf{170}(1), 303--342.

\bibitem{AYZ}
A.~Avila, J.~You, and Q.~Zhou (2017).
\newblock Sharp phase transitions for the almost Mathieu operator.
\newblock \emph{Duke Math.\ J.} \textbf{166}(14), 2697--2718.

\bibitem{BG}
J.~Bourgain and M.~Goldstein (2000).
\newblock On nonperturbative localization with quasi-periodic potential.
\newblock \emph{Ann.\ of Math.\ (2)} \textbf{152}(3), 835--879.

\bibitem{CGMV}
M.~J.~Cantero, L.~Moral, F.~A.~Gr{\"u}nbaum, and L.~Vel{\'a}zquez (2010).
\newblock Matrix-valued Szeg{\H o} polynomials and quantum random walks.
\newblock \emph{Comm.\ Pure Appl.\ Math.} \textbf{63}(4), 464--507.


\bibitem{CFO}
C.~Cedzich, J.~Fillman, and D.~C.~Ong (2023).
\newblock Almost everything about the unitary almost Mathieu operator.
\newblock \emph{Comm.\ Math.\ Phys.} \textbf{403}(2), 745--794.

\bibitem{CRWAGW}
C.~Cedzich, T.~Ryb\'ar, A.~H.~Werner, A.~Alberti, M.~Genske, and R.~F.~Werner (2013).
\newblock Propagation of quantum walks in electric fields.
\newblock \emph{Phys.\ Rev.\ Lett.} \textbf{111}(16), 160601.


\bibitem{DFLY}
D.~Damanik, J.~Fillman, M.~Luki\'c, and W.~Yessen (2016).
\newblock Characterizations of uniform hyperbolicity and spectra of {CMV} matrices.
\newblock \emph{Discrete Contin.\ Dyn.\ Syst.\ Ser.\ S} \textbf{9}(4), 1009--1023.

\bibitem{DFO}
D.~Damanik, J.~Fillman, and D.~C.~Ong (2016).
\newblock Spreading estimates for quantum walks on the integer lattice via power-law bounds on transfer matrices.
\newblock \emph{J.\ Math.\ Pures Appl.\ (9)} \textbf{105}(3), 293--341.

\bibitem{FOZ}
J.~Fillman, D.~C.~Ong, and Z.~Zhang (2017).
\newblock Spectral characteristics of the unitary critical almost-{M}athieu operator.
\newblock \emph{Comm.\ Math.\ Phys.} \textbf{351}(2), 525--561.

\bibitem{Furman}
A.~Furman (1997).
\newblock On the multiplicative ergodic theorem for uniquely ergodic systems.
\newblock \emph{Ann.\ Inst.\ H.\ Poincar\'e Probab.\ Statist.} \textbf{33}(6), 797--815.

\bibitem{GZ}
F.~Gesztesy and M.~Zinchenko (2006).
\newblock Weyl--{T}itchmarsh theory for {CMV} operators associated with orthogonal polynomials on the unit circle.
\newblock \emph{J.\ Approx.\ Theory} \textbf{139}(1--2), 172--213.

\bibitem{GS01}
M.~Goldstein and W.~Schlag (2001).
\newblock H\"older continuity of the integrated density of states for quasi-periodic Schr\"odinger equations and averages of shifts of subharmonic functions.
\newblock \emph{Ann.\ of Math.\ (2)} \textbf{154}(1), 155--203.


\bibitem{GS08}
M.~Goldstein and W.~Schlag (2008).
\newblock Fine properties of the integrated density of states and a quantitative separation property of the {D}irichlet eigenvalues.
\newblock \emph{Geom.\ Funct.\ Anal.} \textbf{18}(3), 755--869.

\bibitem{GS11}
M.~Goldstein and W.~Schlag (2011).
\newblock On resonances and the formation of gaps in the spectrum of quasi-periodic {S}chr\"odinger equations.
\newblock \emph{Ann.\ of Math.\ (2)} \textbf{173}(1), 337--475.

\bibitem{GSV}
M.~Goldstein, W.~Schlag, and M.~Voda (2019).
\newblock On the spectrum of multi-frequency quasiperiodic {S}chr\"odinger operators with large coupling.
\newblock \emph{Invent.\ Math.} \textbf{217}(2), 603--701.

\bibitem{Handry}
R.~Han (2018).
\newblock Dry ten martini problem for the non-self-dual extended {H}arper's model.
\newblock \emph{Trans.\ Amer.\ Math.\ Soc.} \textbf{370}(1), 197--217.


\bibitem{HanSchnol}
R.~Han (2019).
\newblock Shnol's theorem and the spectrum of long range operators.
\newblock \emph{Proc.\ Amer.\ Math.\ Soc.} \textbf{147}(7), 2887--2897.

\bibitem{Hanloc}
R.~Han (2024).
\newblock Sharp localization on the first supercritical stratum for {L}iouville frequencies.
\newblock \emph{arXiv:2405.07810}.

\bibitem{HJY}
R.~Han, S.~Jitomirskaya, and F.~Yang (2022).
\newblock Anti-resonances and sharp analysis of {M}aryland localization for all parameters.
\newblock \emph{arXiv:2205.04021}.

\bibitem{HS1}
R.~Han and W.~Schlag (2022).
\newblock Avila's acceleration via zeros of determinants and applications to {S}chr\"odinger cocycles.
\newblock \emph{Comm.\ Pure Appl.\ Math.} (to appear).

\bibitem{HS3}
R.~Han and W.~Schlag (2023).
\newblock Non-perturbative localization for quasi-periodic {J}acobi block matrices.
\newblock \emph{arXiv:2309.03423}.

\bibitem{HS2}
R.~Han and W.~Schlag (2023).
\newblock Non-perturbative localization on the strip and {A}vila's almost reducibility conjecture.
\newblock \emph{arXiv:2306.15122}.

\bibitem{HYZ20}
R.~Han, F.~Yang, and S.~Zhang (2020).
\newblock Spectral dimension for $\beta$-almost periodic singular {J}acobi operators and the extended {H}arper's model.
\newblock \emph{J.\ Anal.\ Math.} \textbf{142}(2), 605--666.


\bibitem{HJ}
E.~Hamza and A.~Joye (2014).
\newblock Spectral transition for random quantum walks on trees.
\newblock \emph{Comm.\ Math.\ Phys.} \textbf{326}(2), 415--439.

\bibitem{HJS}
E.~Hamza, A.~Joye, and G.~Stolz (2006).
\newblock Localization for random unitary operators.
\newblock \emph{Lett.\ Math.\ Phys.} \textbf{75}(3), 255--272.

\bibitem{HJS09}
E.~Hamza, A.~Joye, and G.~Stolz (2009).
\newblock Dynamical localization for unitary {A}nderson models.
\newblock \emph{Math.\ Phys.\ Anal.\ Geom.} \textbf{12}(4), 381--444.

\bibitem{jit99}
S.~Y.~Jitomirskaya (1999).
\newblock Metal-insulator transition for the almost {M}athieu operator.
\newblock \emph{Ann.\ of Math.\ (2)} \textbf{150}(3), 1159--1175.

\bibitem{JLMaryland}
S.~Jitomirskaya and W.~Liu (2017).
\newblock Arithmetic spectral transitions for the {M}aryland model.
\newblock \emph{Comm.\ Pure Appl.\ Math.} \textbf{70}(6), 1025--1051.

\bibitem{JL1}
S.~Jitomirskaya and W.~Liu (2018).
\newblock Universal hierarchical structure of quasiperiodic eigenfunctions.
\newblock \emph{Ann.\ of Math.\ (2)} \textbf{187}(3), 721--776.

\bibitem{JMavi}
S.~Jitomirskaya and R.~Mavi (2017).
\newblock Dynamical bounds for quasiperiodic {S}chr\"odinger operators with rough potentials.
\newblock \emph{Int.\ Math.\ Res.\ Not.\ IMRN} (2017), no.~1, 96--120.

\bibitem{JS}
S.~Jitomirskaya and B.~Simon (1994).
\newblock Operators with singular continuous spectrum: {III}. Almost periodic {S}chr\"odinger operators.
\newblock \emph{Comm.\ Math.\ Phys.} \textbf{165}(1), 201--205.

\bibitem{JY}
S.~Jitomirskaya and F.~Yang (2021).
\newblock Pure point spectrum for the {M}aryland model: a constructive proof.
\newblock \emph{Ergodic Theory Dynam.\ Systems} \textbf{41}(1), 283--294.

\bibitem{JYinterface}
S.~Jitomirskaya and F.~Yang.
\newblock Phase transition in phase: the interface.
\newblock Preprint (draft), available from the authors.

\bibitem{JoyeFMM}
A.~Joye (2005).
\newblock Fractional moment estimates for random unitary operators.
\newblock \emph{Lett.\ Math.\ Phys.} \textbf{72}(1), 51--64.

\bibitem{JoyeTD}
A.~Joye (2011).
\newblock Random time-dependent quantum walks.
\newblock \emph{Comm.\ Math.\ Phys.} \textbf{307}(1), 65--100.

\bibitem{J}
A.~Joye (2012).
\newblock Dynamical localization for $d$-dimensional random quantum walks.
\newblock \emph{Quantum Inf.\ Process.} \textbf{11}(5), 1251--1269.

\bibitem{JM}
A.~Joye and M.~Merkli (2010).
\newblock Dynamical localization of quantum walks in random environments.
\newblock \emph{J.\ Stat.\ Phys.} \textbf{140}(6), 1025--1053.

\bibitem{Kruger}
H.~Kr{\"u}ger (2013).
\newblock Orthogonal polynomials on the unit circle with {V}erblunsky coefficients defined by the skew-shift.
\newblock \emph{Int.\ Math.\ Res.\ Not.\ IMRN} \textbf{2013}(18), 4135--4169.

\bibitem{LDZ}
L.~Li, D.~Damanik, and Q.~Zhou (2022).
\newblock Absolutely continuous spectrum for {CMV} matrices with small quasi-periodic {V}erblunsky coefficients.
\newblock \emph{Trans.\ Amer.\ Math.\ Soc.} \textbf{375}(9), 6093--6125.

\bibitem{LonghiOL19}
S.~Longhi (2019).
\newblock Non-{B}loch $\mathcal{PT}$ symmetry breaking in non-{H}ermitian photonic quantum walks.
\newblock \emph{Opt.\ Lett.} \textbf{44}(23), 5804--5807.

\bibitem{LonghiOL22}
S.~Longhi (2022).
\newblock Invisible non-{H}ermitian potentials in discrete-time photonic quantum walks.
\newblock \emph{Opt.\ Lett.} \textbf{47}(16), 4091--4094.

\bibitem{LonghiPRL24}
S.~Longhi (2024).
\newblock Dephasing-induced mobility edges in quasicrystals.
\newblock \emph{Phys.\ Rev.\ Lett.} \textbf{132}, 236301.

\bibitem{Por2013}
R.~Portugal (2013).
\newblock \emph{Quantum Walks and Search Algorithms}.
\newblock Springer, New York.

\bibitem{Shnol57}
E.~E.~Shnol (1957).
\newblock On the behavior of the eigenfunctions of the {S}chr\"odinger equation.
\newblock \emph{Mat.\ Sb.\ (N.S.)} \textbf{42(84)}(3), 273--286. (In Russian)

\bibitem{Simon81}
B.~Simon (1981).
\newblock Spectrum and continuum eigenfunctions of {S}chr\"odinger operators.
\newblock \emph{J.\ Funct.\ Anal.} \textbf{42}(3), 347--355.

\bibitem{SOPUC1}
B.~Simon (2005).
\newblock \emph{Orthogonal Polynomials on the Unit Circle. Part~1: Classical Theory}.
\newblock AMS Colloquium Publications, Vol.~54, Part~1. American Mathematical Society.

\bibitem{SOPUC2}
B.~Simon (2005).
\newblock \emph{Orthogonal Polynomials on the Unit Circle. Part~2: Spectral Theory}.
\newblock AMS Colloquium Publications, Vol.~54, Part~2. American Mathematical Society.

\bibitem{5years}
B.~Simon (2007).
\newblock {CMV} matrices: five years after.
\newblock \emph{J.\ Comput.\ Appl.\ Math.} \textbf{208}(1), 120--154.

\bibitem{SSY}
M.~G.~Solis, D.~Spedale, and F.~Yang (2024).
\newblock On the spectrum of the magnetic {L}aplacian on the {L}ieb lattice.
\newblock \emph{arXiv:2401.11405}.

\bibitem{Wang}
F.~Wang (2018).
\newblock A formula related to {CMV} matrices and {S}zeg{\H o} cocycles.
\newblock \emph{J.\ Math.\ Anal.\ Appl.} \textbf{464}(1), 304--316.

\bibitem{ehmtransition}
F.~Yang (2018).
\newblock Spectral transition line for the extended {H}arper's model in the positive {L}yapunov exponent regime.
\newblock \emph{J.\ Funct.\ Anal.} \textbf{275}(3), 712--734.

\bibitem{FanUAMO}
F.~Yang (2024).
\newblock Anderson localization for the unitary almost {M}athieu operator.
\newblock \emph{Nonlinearity} \textbf{37}(8), 085010.

\bibitem{Faninterval}
F.~Yang (2025).
\newblock Interval spectrum for electric quantum walk and related skew-shift {CMV} matrices.
\newblock \emph{Lett.\ Math.\ Phys.} \textbf{115}(3), 49.

\end{thebibliography}

\end{document}